\DeclarePairedDelimiter{\ceil}{\lceil}{\rceil}
\renewcommand{\part}[1]{\noindent\textbf{Part #1)}}
\newcommand{\ba}{\begin{align*}}
\newcommand{\ea}{\end{align*}}
\newcommand{\N}{\mathbb{N}}
\newcommand{\Int}[1]{%
  {\kern0pt#1}^{\mathrm{o}}%
}
\newcommand{\bp}{\begin{pmatrix}}
\newcommand{\ep}{\end{pmatrix}}
\newcommand{\interior}[1]{%
  {\kern0pt#1}^{\mathrm{o}}%
}
\newcommandx{\kate}[2][1=]{\todo[linecolor=red,backgroundcolor=red!25,bordercolor=red,#1]{#2}}
\newcommandx{\zhen}[2][1=]{\todo[linecolor=lime,backgroundcolor=lime!25,bordercolor=lime,#1]{#2}}
\newcommandx{\yi}[2][1=]{\todo[linecolor=black,backgroundcolor=black!25,bordercolor=black,#1]{#2}}
\newcommandx{\kabir}[2][1=]{\todo[linecolor=purple,backgroundcolor=purple!25,bordercolor=purple,#1]{#2}}
\newcommandx{\nat}[2][1=]{\todo[linecolor=blue,backgroundcolor=blue!25,bordercolor=blue,#1]{#2}}
\newcommandx{\jonah}[2][1=]{\todo[linecolor=yellow,backgroundcolor=yellow!25,bordercolor=yellow,#1]{#2}}
\newcommandx{\colin}[2][1=]{\todo[linecolor=gray,backgroundcolor=gray!25,bordercolor=gray,#1]{#2}}
\begin{document}
\author{C. Adams, O. Eisenberg, J. Greenberg, K. Kapoor, Z. Liang, \\
K. O'Connor, N. Pacheco-Tallaj, Y. Wang}
\title{TG-Hyperbolicity of Virtual Links}
\begin{abstract}We extend the theory of hyperbolicity of links in the 3-sphere to tg-hyperbolicity of virtual links, using the fact that the theory of virtual links can be translated into the theory of links living in closed orientable thickened surfaces. When the boundary surfaces are taken to be totally geodesic, we obtain a tg-hyperbolic structure with a unique associated volume.   We prove that all virtual alternating links are tg-hyperbolic. We further extend tg-hyperbolicity to several classes of non-alternating virtual links. We then consider bounds on volumes of virtual links and include a table for volumes of the 116 nontrivial virtual knots of four or fewer crossings, all of which, with the exception of the trefoil knot, turn out to be tg-hyperbolic. 
\end{abstract}
\date{March 2019}

\maketitle

\theoremstyle{definition}
\newtheorem{definition}{Definition}[section]
\newtheorem{theorem}{Theorem}[section]
\newtheorem{lemma}[theorem]{Lemma}
\newtheorem{conjecture}{Conjecture}
\newtheorem{corollary}[theorem]{Corollary}
\newtheorem{proposition}[theorem]{Proposition}
\newtheorem{example}{Example}[section]

\section{Introduction}\label{intro}

In 1978, Thurston \cite{thurston} proved the remarkable result that knots in $S^3$ fall into three categories: torus knots, satellite knots and hyperbolic knots. Hyperbolic knots are knots such that their complement $S^3 \setminus L$ admits a complete hyperbolic metric of constant curvature $-1$. The Mostow-Prasad Rigidity Theorem then implies that to each such hyperbolic knot, one can associate a unique hyperbolic volume, in addition to a variety of other hyperbolic invariants. 

\bigskip

More generally, Thurston showed that a link in a compact 3-manifold is hyperbolic if and only if its complement contains no \emph{essential} disks, spheres, tori or annuli, where a surface is essential if it is incompressible and not parallel into the boundary.   

\bigskip

In this  paper, we extend the notion of hyperbolicity to the category of virtual links, introduced by Kauffman in 1999 (see \cite{kauffman}) as a generalization of classical knot theory. 

\bigskip

A \emph{virtual link} $L$ is an equivalence class of link diagrams, where in the diagrams, in addition to the traditional classical crossings, we allow a new type of crossing called a \emph{virtual crossing}, denoted by circling the crossing.  A traditional diagram with no virtual crossings is called a \emph{classical diagram}. Classical diagrams are considered a subset of the broader category of virtual diagrams.

\begin{figure}[h!]
    \centering
    \includegraphics[width=0.2\textwidth]{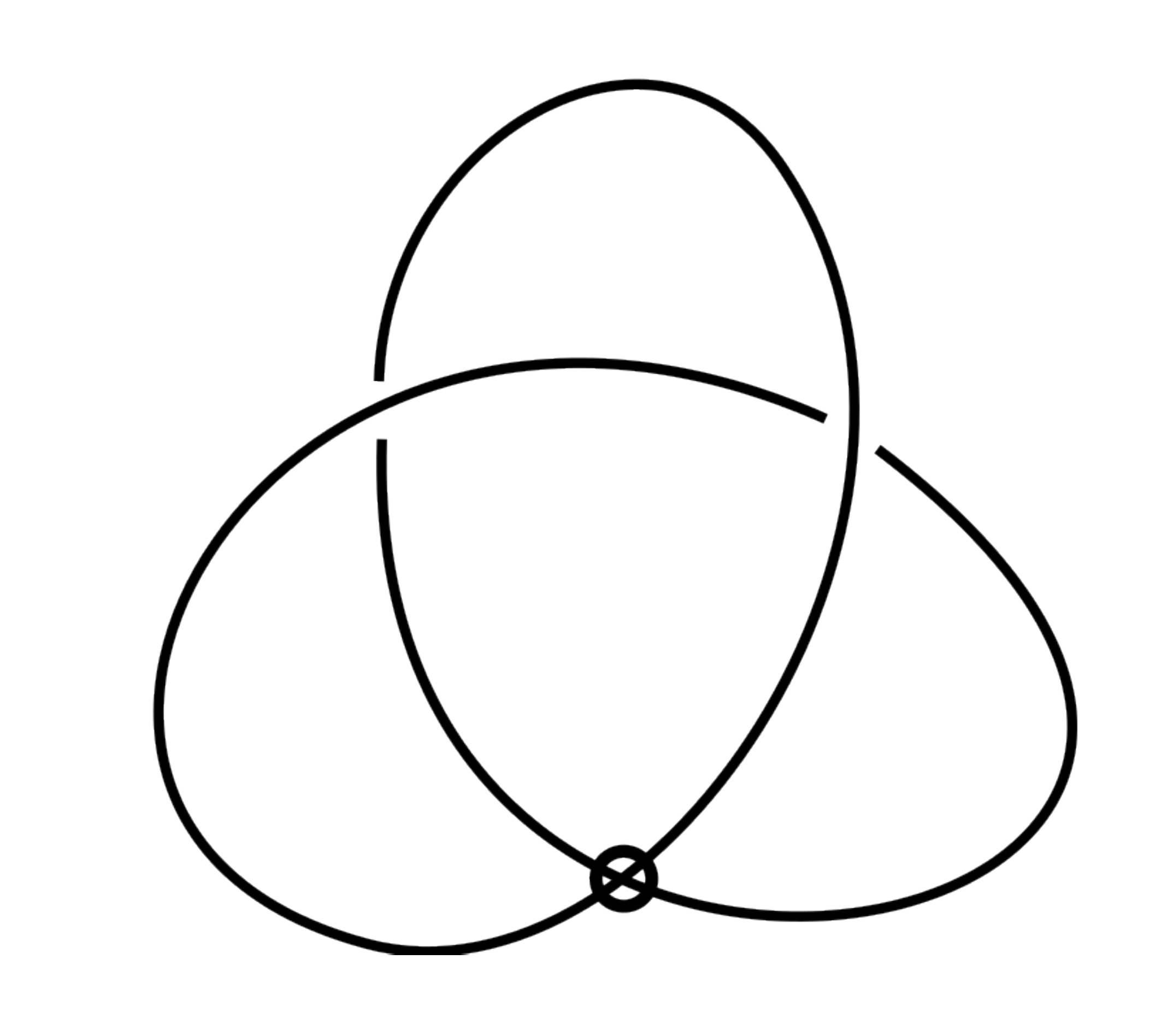}
    \caption{A diagram of the virtual trefoil knot.}
    \label{fig:virt_tref}
\end{figure}

Just as we have a set of Reidemeister moves that describes an equivalence on classical diagrams, there is a set of virtual Reidemeister moves that describes an equivalence on virtual diagrams. 

\bigskip

There is an equivalence between virtual links and links embedded in thickened surfaces, the projections of which will be link diagrams on a surface.

\begin{definition}
A \emph{surface-link pair}, $(S,L')$ is an equivalence class of classical link diagrams on an orientable surface $S$, considered up to classical Reidemeister moves on the surface. As in the classical case, these are in natural bijection with isotopy classes of tame embeddings of disjoint unions of circles  into $S \times I$.
\end{definition}

We will pass between a surface-link pair $(S,L')$ and the corresponding thickened surface and link $(S \times I, L')$ as needed, using $L'$ to represent both the projection to $S$ and the link in $S \times I$. 

\begin{definition}
A \emph{ribbon surface} $R(P)$ of a virtual link projection $P$ of a link $L$ is a surface-link pair constructed by the following algorithm. At each classical crossing of $P$, insert a disk, and connect the disks with flat untwisted band neighborhoods of the strands connecting the crossings of $P$, allowing an arbitrary choice of one of the two band neighborhoods to pass under the other at each virtual crossing of two strands. Once the ribbon surface is obtained, we cap off each boundary component with a disk. We say that the surface-link pair obtained in this manner \emph{represents} the original virtual link $L$.
\end{definition}

\begin{figure}[h!]
    \centering
    \includegraphics[width=0.2\textwidth]{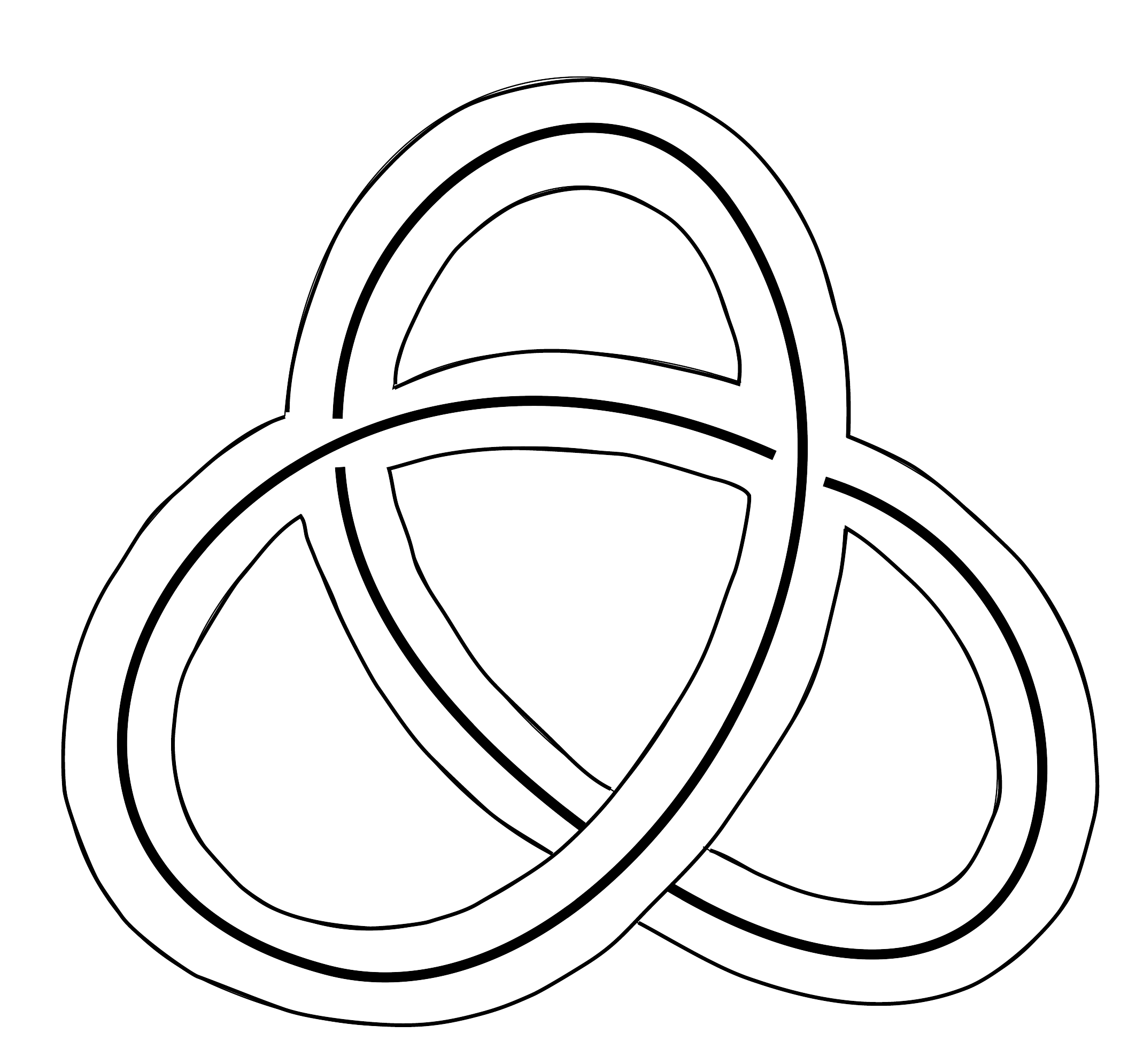}
    \caption{The ribbon surface associated to the above diagram of the virtual trefoil.}
    \label{fig:virt_tref_rib}
\end{figure}

In 2002, Carter, Kamada and Saito \cite{carter-kamada-saito} showed that the theory of virtual knots and links is equivalent to the theory of classical knots and links living in thickened closed, orientable, compact surfaces $S\times I$, allowing for stabilizations and destabilizations.  A \emph{stabilization} is the addition of a handle to $S$ avoiding $L$, and a \emph{destabilization} corresponds to cutting a surface-link pair $(S, L)$ along a simple closed curve $C$  on $S$ disjoint from $L$, and capping off the resulting boundaries with a disk. If a destabilization along $C$ reduces the genus of $S$, $C$ is called a \emph{cancellation curve}.

\bigskip

In \cite{kuperberg}, a fundamental theorem for the theory was proved. 

\begin{theorem}[Kuperberg, 2002]{\label{thm:kuperberg}}
Every stable equivalence class of links in thickened surfaces has a unique irreducible representative.
\end{theorem}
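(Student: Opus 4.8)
The plan is to establish existence of an irreducible representative by a termination argument and uniqueness by putting a stable equivalence into a normal form. For existence, start from any representative $(S,L)$ of the given stable equivalence class; as long as $S$ carries a cancellation curve, destabilize along it. Each such destabilization strictly decreases $\mathrm{genus}(S)$, which is a nonnegative integer, so the process terminates, and the terminal surface-link pair is irreducible by construction.

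For uniqueness, let $(S_1,L_1)$ and $(S_2,L_2)$ be irreducible and stably equivalent, hence joined by a finite zig-zag of stabilizations and destabilizations. I would first show that a ``destabilize, then stabilize'' can always be replaced by a ``stabilize, then destabilize'': the small feet of the new handle can be isotoped off the finitely many capping disks created by the destabilization, so the handle may be attached before the cutting curve $C$ is cut. Bubble-sorting all stabilizations to the front then produces a single surface-link pair $(S,L)$ that destabilizes onto both $(S_1,L_1)$ and $(S_2,L_2)$. Next, pushing each successive cutting curve off the earlier capping disks by innermost-arc isotopies, each of these two destabilizations is realized by a system $\mathcal{C}_i$ of pairwise disjoint, essential, non-separating simple closed curves on $S$, disjoint from $L$, whose surgery yields $(S_i,L_i)$; here separating curves can be excluded because surface-link pairs are taken connected (split links are handled one split component at a time). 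The crucial input is the following key lemma: if $(S_i,L_i)$ is irreducible with $L_i$ non-split then every essential simple closed curve on $S_i$ meets $L_i$ --- otherwise a non-separating such curve would be a cancellation curve, and a separating one would separate off a positive-genus piece disjoint from $L_i$ and hence also lower the genus (the genus-zero case is vacuous). Granting that $\mathcal{C}_1$ and $\mathcal{C}_2$ can be made disjoint rel $L$, surger $(S,L)$ along $\mathcal{C}_1\cup\mathcal{C}_2$: performing the surgery along $\mathcal{C}_2$ first gives $(S_2,L_2)$, on which the surviving image of $\mathcal{C}_1$ is a family of curves disjoint from $L_2$, hence inessential by the key lemma, so surgering further merely splits off spheres. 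Thus the surgery of $(S,L)$ along $\mathcal{C}_1\cup\mathcal{C}_2$ is $(S_2,L_2)$, and by the symmetric argument it is $(S_1,L_1)$; therefore $(S_1,L_1)\cong(S_2,L_2)$.

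The main obstacle, and the only genuinely hard step, is arranging $\mathcal{C}_1\cap\mathcal{C}_2=\emptyset$: generic curve systems can intersect essentially, so bigon removal alone is not enough. I would induct on the geometric intersection number $|\mathcal{C}_1\cap\mathcal{C}_2|$ (with $\mathrm{genus}(S)$ as a secondary parameter). Surgering $S$ along $\mathcal{C}_1$ converts $\mathcal{C}_2$ into a family of disjoint arcs and circles on $S_1$ that is disjoint from $L_1$, with arc endpoints on the capping-disk boundaries; an outermost arc, together with a sub-arc of a capping-disk boundary, bounds a disk $\delta$ on $S_1$. The key lemma forces this arc to be boundary-parallel --- so an isotopy of $\mathcal{C}_2$ reduces $|\mathcal{C}_1\cap\mathcal{C}_2|$ --- unless $\delta$ contains part of $L_1$, in which case one instead performs a handle slide, replacing a curve of $\mathcal{C}_1$ by a disjoint curve that surgers to the same $(S_1,L_1)$ while still lowering the intersection number. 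Iterating makes the two systems disjoint. I expect the precise formulation of this dichotomy --- an innermost-disk isotopy versus a handle slide dictated by where $L$ sits relative to the handle structure of $S$ --- together with the verification that it terminates, to be the technical crux of the whole argument.
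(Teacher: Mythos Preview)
The paper does not contain a proof of this theorem. It is stated as a result of Kuperberg, cited from \cite{kuperberg}, and used thereafter as a black box; no argument for it is given or sketched anywhere in the paper. There is therefore no ``paper's own proof'' to compare your proposal against.

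As a side remark, your outline is in a reasonable spirit --- existence by genus descent, uniqueness by passing to a common stabilization and then reconciling the two destabilizing curve systems --- and you have correctly flagged the disjointness of $\mathcal{C}_1$ and $\mathcal{C}_2$ as the technical crux. Kuperberg's own argument (in the cited paper, not here) is organized somewhat differently: he works directly with the $3$-manifold $M=(S\times I)\setminus L$, encodes destabilizations as essential vertical annuli, and leans on standard incompressible-surface machinery rather than a combinatorial handle-slide induction on curve intersections. Your handle-slide step is also underspecified as written: when the outermost disk $\delta$ meets $L_1$, you have not said which curve of $\mathcal{C}_1$ is slid, along what, or why the slide still yields the same $(S_1,L_1)$ after surgery while strictly lowering $|\mathcal{C}_1\cap\mathcal{C}_2|$. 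But since the paper under review offers no proof at all, any such comparison is with Kuperberg's original, not with this paper.
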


From this theorem, we know that we may assign to each virtual link $L$ a canonical surface-link pair $(S_g, L')$. Note that by Theorem \ref{thm:kuperberg} we may define $g$ to be the \emph{genus}, or \emph{minimal genus}, of the link $L$. It follows that $g$ is an invariant of virtual links. A surface-link pair $(S_g, L')$ is called \emph{minimal genus} if $g$ is the minimal genus of $L$. By Theorem \ref{thm:kuperberg} the minimal genus surface-link pair of $L$ is unique.

\bigskip

It is natural to ask whether certain virtual links are hyperbolic when they are realized as a link in a thickened surface. Indeed, we may apply Thurston's hyperbolicity criterion for $3$-manifolds to a minimal genus surface-link pair in order to obtain some notion of hyperbolicity. However, some additional criteria must be met by a virtual link in order to ensure the hyperbolic volume is well-defined, which is called \emph{tg-hyperbolicity}.

\begin{definition} A compact orientable 3-manifold $M$ is \emph{tg-hyperbolic} if, after capping off all sphere boundaries with balls, and discarding all torus boundaries, the resulting manifold $M'$ has a finite volume hyperbolic metric such that all remaining boundary components are totally geodesic. We also say $M'$ is a tg-hyperbolic manifold.
\end{definition}

We do not distinguish between the manifold obtained by removing a closed neighborhood of a link and the manifold obtained by just removing the link, since they are homeomorphic.

\begin{definition}
A surface-link pair $(S, L')$ is \emph{tg-hyperbolic} if the complement $(S \times I) \setminus L'$ is tg-hyperbolic.  A virtual link $L$ is said to be \emph{tg-hyperbolic} if there exists a tg-hyperbolic surface-link pair $(S, L')$ representing $L$. We will prove that such a pair must be the minimal genus representative and hence it is unique. We write $vol(L)$ for the tg-hyperbolic volume of $S \times I \setminus L'$.
\end{definition}

Let $N(L')$ be an open neighborhood of the embedding of $L'$ in $S \times I$. By work of \cite{thurston}, the existence of such a hyperbolic metric is equivalent to the fact that the manifold $(S \times I) \setminus N(L')$ possesses no properly embedded essential spheres, annuli, or tori. (Note that essential disks need not be considered since $\partial (S \times I)$ is always incompressible in $S \times I$ and if there were an essential disk with boundary in $\partial N(L')$, then there would also be an essential sphere.)

\bigskip

\noindent The additional criterion of having a totally geodesic boundary when $g \geq 2$ is to ensure that the link complement $(S_g \times I) \setminus L'$ has a unique finite volume. This condition is equivalent to having a hyperbolic metric on the manifold obtained by doubling across the boundary of $(S \times I) \setminus L'$ by Mostow-Prasad Rigidity. 


\begin{theorem}\label{thm:hyperbolic-minimal-genus}
If surface-link pair $(S, L')$ representing virtual link $L$ is tg-hyperbolic, then it is the unique minimal genus representative of $L$.
\end{theorem}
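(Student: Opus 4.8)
The plan is to argue that tg-hyperbolicity of $(S,L')$ forces $S$ to have minimal genus, and then invoke Kuperberg's theorem (Theorem \ref{thm:kuperberg}) to conclude uniqueness. The key observation is that if $(S,L')$ were \emph{not} minimal genus, then passing to the minimal genus representative $(S_g, L'')$ requires a sequence of destabilizations, and at least one of these is along a cancellation curve $C \subset S$ disjoint from $L'$. I would show that such a curve $C$ produces an essential surface in the complement $(S\times I)\setminus N(L')$, contradicting tg-hyperbolicity via Thurston's criterion as stated in the excerpt.

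Here is how I would carry this out. First, reduce to the case where $(S,L')$ admits a single destabilization along a cancellation curve $C$ (if $(S,L')$ is non-minimal, such a $C$ exists by Theorem \ref{thm:kuperberg} together with the fact that any two stably equivalent pairs are connected by stabilizations and destabilizations, and a non-minimal irreducible-looking pair must still admit a reduction). Second, consider the annulus $A = C \times I \subset S \times I$. Since $C$ is disjoint from $L'$, $A$ lies in $(S\times I)\setminus N(L')$, and its two boundary circles lie on $\partial(S\times I) = S \times \{0\} \sqcup S \times \{1\}$. I would check that $A$ is incompressible: a compression would yield a disk in $S\times\{i\}$ bounded by $C$, making $C$ inessential in $S$, but a cancellation curve is by definition non-separating (or at least essential) in $S$ of genus $\geq 1$. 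Third, and this is the delicate point, I must rule out that $A$ is boundary-parallel — i.e., that $A$ cobounds a region $C \times I \times I$ with an annulus in $\partial(S\times I)$. This cannot happen because $A$ runs from the bottom copy $S\times\{0\}$ to the top copy $S\times\{1\}$, whereas a boundary-parallel annulus in $S \times I$ with this product structure would have both boundary components on the same component of $\partial(S\times I)$; an annulus with one boundary circle on each of $S\times\{0\}$ and $S\times\{1\}$ is essential precisely when $C$ is essential in $S$. Thus $A$ is an essential annulus, so by the hyperbolicity criterion quoted before Theorem \ref{thm:hyperbolic-minimal-genus}, $(S\times I)\setminus N(L')$ is not tg-hyperbolic, a contradiction. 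Hence $(S,L')$ has minimal genus $g$, and by Theorem \ref{thm:kuperberg} it is the unique such representative.

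The main obstacle I anticipate is the boundary-parallel case: making precise why an annulus joining the two distinct boundary components of the thickened surface cannot be boundary-parallel, and handling the bookkeeping when $C$ is separating versus non-separating in $S$. One must also be slightly careful about the genus-$1$ case, where $(S\times I)\setminus N(L')$ could a priori have torus boundary components that get discarded in the definition of tg-hyperbolicity; but the cancellation-curve annulus $C\times I$ still has boundary on the genus-$g$ surface boundary components (not on small link-meridian tori), so the argument goes through. A secondary point worth spelling out is that the existence of \emph{some} cancellation curve whenever the genus is non-minimal follows from Kuperberg's uniqueness of the irreducible representative: if no destabilization reduced genus, $(S,L')$ would already be irreducible and hence, being stably equivalent to the genus-$g$ pair, would have $g$ equal to its genus.
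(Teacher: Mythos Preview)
Your proposal is correct and follows essentially the same route as the paper. Both arguments reduce to: tg-hyperbolicity (via Thurston) excludes essential annuli in $(S\times I)\setminus N(L')$, while Kuperberg's work says that minimal genus is characterized exactly by the absence of essential spheres, disks, and annuli with boundary in $\partial(S\times I)$; uniqueness then follows from Theorem~\ref{thm:kuperberg}. The only difference is packaging: the paper simply cites Kuperberg's criterion directly, whereas you unpack it by explicitly exhibiting the vertical annulus $C\times I$ coming from a cancellation curve and verifying it is incompressible and not boundary-parallel.
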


\begin{proof}
If $(S, L')$ is tg-hyperbolic, then it contains no essential spheres, disks, tori or annuli by work of Thurston \cite{thurston}. But as pointed out by Kuperberg (\cite{kuperberg}), the criteria for being minimal genus is the nonexistence of any essential spheres, disks or annuli with both boundaries in $\partial S \times I$. So $(S, L')$ is the unique minimal genus representative. 
\end{proof}

\noindent In Section \ref{alternating}, we consider virtual alternating links. 

\begin{definition} A \emph{virtual alternating link diagram} is a virtual link diagram in the plane such that the classical crossings alternate between over and under crossings as we travel around the diagram, ignoring virtual crossings.
\end{definition}

A theorem from \cite{SMALL2017} shows that a fully alternating link in a thickened surface,which is to say one that possesses an alternating projection such that all complementary faces are disks, must have a tg-hyperbolic complement. (See also related work in the genus one case in \cite{CKP} and in the arbitrary genus case in \cite{HP}.) We use this to prove that a prime nontrivial virtual alternating link that is not a classical 2-braid is tg-hyperbolic. Further, we explain how one determines primeness. As an example, we use the results from this section to prove tg-hyperbolicity for a particular class of fully alternating hyperbolic links, the \emph{virtual $n$-polygonal links}.

\bigskip

In Section \ref{kishino}, we consider certain classes of non-alternating virtual links. We prove that a virtual link obtained from a reduced prime connected alternating diagram by converting one crossing to virtual is tg-hyperbolic. Further, we consider a generalization of the Kishino knot. The Kishino knot has been of particular interest in virtual knot theory as an example of a nontrivial knot that is the connect sum of two trivial virtual knots. There are various proofs of its nontriviality. For instance,  in \cite{dye-kauffman}, it is proved that its virtual genus is 2 and it is therefore nontrivial.  Here, we generalize the Kishino construction to provide additional examples of non-alternating virtual links that are tg-hyperbolic. In order to prove this, we introduce an operation that appends a half-Kishino knot to any minimal genus generating projection of a tg-hyperbolic link, and results in a tg-hyperbolic link with genus one greater. We also use this to obtain tg-hyperbolic virtual links with the minimum possible number of crossings per genus. 

\bigskip
In Section \ref{hyperbolic-volume}, we determine some bounds for volumes of links embedded in thickened surfaces of various genera. In Section \ref{conjectures}, we list some conjectures on volumes of virtual links, and include what is known to support these conjectures.

\bigskip

\noindent Section \ref{table} presents a table of hyperbolic volumes of virtual knots of up to four classical crossings, obtained using the computer program SnapPy (\cite{SnapPy}), which was critical to these investigations.There are several interesting phenomena. First, of the 116 nontrivial virtual knots of four or fewer classical crossings, all but the classical trefoil knot are tg-hyperbolic. Second, there are examples of subsets of two to four of the knots that all have the same volume. It would be interesting to understand why this is the case. 

\bigskip

In a separate paper (c.f.\cite{small18}), we consider the Turaev surface construction for knots and links. In the case of classical alternating and virtual alternating link projections, this is the same construction as the ribbon surfaces described above. But for other link projections, the Turaev surface construction yields a surface-link pair that is always fully alternating and therefore, if prime, is tg-hyperbolic. This allows the extension of hyperbolic invariants to all classical and virtual link complements.



\section{Hyperbolicity of Virtual Alternating  Links}\label{alternating}

\begin{definition} Let $(S, L')$ be a surface-link pair. We say $(S,L')$ is \emph{fully alternating} if $L'$ is alternating and its complementary regions on $S$ are all open disks.
\end{definition}

\begin{definition} Let $(S, L')$ be a surface-link pair. We say $L'$ is prime if there does not exist a ball $B$ in $S \times I$ such that $\partial B$ intersects $L'$ transversely at two points and such that $B \cap L'$ is not an unknotted arc of $L'$.
\end{definition}

\noindent In \cite{SMALL2017}, two relevant theorems were proved.

\begin{theorem}\cite{SMALL2017}
\label{prime}A reduced fully alternating link $L'$ on a surface  $S$  is prime if and only if there does not exist a disk $D$ on $S$ such that $\partial D$ intersects $L'$ twice transversely and there are crossings in $D\cap L'$.
\end{theorem}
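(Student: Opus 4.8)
The plan is to adapt Menasco's analysis of alternating links in $S^3$ to the thickened surface $S \times I$: present $L'$ in a standard position relative to the projection surface $P = S \times \{1/2\}$ and study how the sphere witnessing non-primeness meets $P$. Concretely, isotope $L'$ so that it lies on $P$ except inside a small ball (a \emph{bubble}) at each crossing, where the overstrand bulges into $S \times (1/2, 1]$ and the understrand into $S \times [0, 1/2)$; then $S \times [1/2,1]$ and $S \times [0,1/2]$, each cut along $N(L')$, are disjoint unions of balls glued along the bubble disks, precisely because every complementary region of $L'$ on $S$ is a disk (this is where \emph{fully alternating} enters).

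One direction is direct. Suppose there is a disk $D \subset S$ with $\partial D$ meeting $L'$ transversely in two points and with at least one crossing of $L'$ in the interior of $D$; choose $D$ so that $\partial D$ avoids all bubbles. Let $B$ be the ball lying over $D$ between heights $1/2 - \varepsilon$ and $1/2 + \varepsilon$, enlarged to contain the bubbles over $D$. Then $\partial B$ meets $L'$ transversely in two points and $B \cap L'$ is a single arc. Because the diagram is reduced and alternating, the portion of $L'$ in $D$ has no nugatory crossing and does not simplify, so this arc is not $\partial$-parallel in $B$, i.e. it is knotted; hence $L'$ is not prime.

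For the converse, suppose $L'$ is not prime, and fix a ball $B$ with $\partial B$ meeting $L'$ transversely in two points and $B \cap L'$ a knotted arc. Put $L'$ in the position above, arrange the two points of $\partial B \cap L'$ to lie on $P$ outside all bubbles, and make the twice-punctured sphere $Q = \partial B \setminus N(L')$ transverse to $P$ and to the bubble spheres, so that $Q \cap P$ is a union of simple closed curves and of arcs joining the two punctures. The heart of the proof is a sequence of innermost-disk and outermost-arc exchanges that removes all simple closed curves of $Q \cap P$ and then all but one of the arcs. These use that the two sides of $P$ are standard (so that an innermost disk or outermost arc in $Q$ lying on one side can be pushed across $P$), and that $L'$ is reduced (so that no circle of $Q \cap P$ can enclose exactly one bubble, which would force a nugatory crossing); circles meeting or enclosing bubbles are handled by sliding them across the bubbles. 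At the end $Q \cap P$ is a single arc $\alpha$ cutting $Q$ into a disk above $P$ and a disk below $P$.

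Finally, $\alpha$ together with the subarc of $L'$ running through $B$ forms, after a small pushoff, a simple closed curve $c$ on $P \cong S$ meeting $L'$ transversely in two points, and $\partial B$ is now vertical over $c$; hence $B$ is isotopic to the region over the disk $D \subset S$ bounded by $c$, and $B \cap L' = D \cap L'$. If $D$ contained no crossing of $L'$ then $D \cap L'$ would be a crossingless arc and $B \cap L'$ would be $\partial$-parallel, contradicting that it is knotted; so $D$ contains a crossing and is the disk we want. The main obstacle is the exchange argument of the previous paragraph: one must keep careful track of how circles and outermost arcs of $Q \cap P$ interact with the bubbles and with the disk regions of $S \setminus L'$, and check that full alternating-ness and reducedness always permit the reduction — this is exactly where the hypotheses do their work, and where the argument in \cite{SMALL2017} is carried out in detail.
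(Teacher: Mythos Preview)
This theorem is not proved in the paper; it is quoted from \cite{SMALL2017} and used as input. So there is no proof here to compare yours against. Your outline is the right shape---Menasco's crossing-ball technique adapted to $S\times I$---and is presumably close in spirit to what \cite{SMALL2017} does.

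That said, there is a genuine gap in your final step. You conclude that the curve $c$ on $S$ bounds a disk $D\subset S$ and that $B$ sits over $D$; but on a surface of positive genus a simple closed curve need not bound a disk, and nothing in your exchange argument as written guarantees this. What is needed is a further reduction, using the \emph{alternating} hypothesis and not merely cellularity of the complementary regions, to arrange that at least one of the two hemispheres of $\partial B$ is free of saddles (intersections with bubble boundaries); only then does that hemisphere project to an honest disk in $S$ with boundary $c$. Your sketch folds this into ``circles meeting or enclosing bubbles are handled by sliding them across the bubbles,'' but that phrase is carrying a lot of unacknowledged work, and it is exactly here that alternation is essential. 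Separately, in your ``direct'' direction the assertion that a reduced alternating $1$-tangle with crossings is actually knotted is itself a theorem (Menasco, in $S^3$), not an observation; the present paper explicitly invokes it as such a few lines after stating the theorem.
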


\noindent When there are no such disks, we say $(S,L')$ is \textit{obviously prime}. Hence, the theorem says that a reduced fully alternating link in a thickened surface is prime if and only if it is obviously prime. 

\begin{theorem}\cite{SMALL2017}
\label{tg}A prime fully alternating link in an orientable surface of genus at least one is tg-hyperbolic. 
\end{theorem}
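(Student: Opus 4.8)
The plan is to adapt Menasco's classical argument, that prime alternating links in $S^3$ are hyperbolic, to the setting of a thickened surface. Write $M = (S \times I) \setminus N(L')$ for the link exterior. By the criterion recorded above (following \cite{thurston}), it suffices to show that $M$ contains no properly embedded essential sphere, torus, or annulus, and then, separately, to verify the totally geodesic boundary condition when $g \geq 2$.

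First I would put $L'$ into \emph{Menasco normal position} with respect to the projection surface $\Sigma = S \times \{1/2\}$: outside small balls (``bubbles'') centered at the classical crossings, $L'$ lies on $\Sigma$, while inside each bubble the overstrand and understrand run along the upper and lower hemispheres respectively. Let $\Sigma_{+} = S \times \{1/2 + \epsilon\}$ and $\Sigma_{-} = S \times \{1/2 - \epsilon\}$ be parallel pushoffs chosen to enclose all the bubbles. Given an essential surface $F$ in $M$, standard innermost-disk and irreducibility arguments let us isotope $F$ so that it meets $\Sigma_{+} \cup \Sigma_{-}$ transversely in simple closed curves, none of which bounds a disk in $\Sigma_{\pm} \setminus L'$, and so that it meets each bubble in ``saddle'' bigons straddling the two strands; between consecutive levels $F$ is carried by product regions, so all combinatorial information about $F$ is encoded by a system of arcs, circles, and saddles on $\Sigma$.

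The combinatorial core is then to derive a contradiction from the existence of such an $F$. Let $\gamma$ be an innermost curve of $F \cap \Sigma_{\pm}$, bounding a disk $R \subset \Sigma_{\pm}$. If $R$ meets no bubble, then $\gamma$ bounds a disk in $\Sigma_{\pm} \setminus L'$, contrary to our normalization, so $R$ must meet bubbles. Exploiting the \emph{alternating} hypothesis, which pins down the over/under pattern around every complementary region (each of which is a disk because the diagram is fully alternating), together with an outermost-arc analysis of how the saddles can be chained, one shows that the presence of $F$ forces a disk $D$ on $S$ whose boundary meets $L'$ in exactly two transverse points and which contains a crossing; by Theorem \ref{prime} this contradicts primeness. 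Running this analysis through the cases $F = S^2$, $F = T^2$, $F$ an annulus with meridional boundary (where one deduces that a crossing is nugatory), and $F$ an annulus with boundary on $S \times \{0\} \cup S \times \{1\}$, eliminates every essential surface. Two features make the surface setting more delicate than Menasco's: the pair of extra boundary surfaces $S \times \{0\}$ and $S \times \{1\}$, which generate additional annulus sub-cases, handled using the fact that full alternation forces every essential curve on $S$ to meet $L'$ so that no essential vertical annulus can survive; and the fact that at genus zero the conclusion genuinely fails for classical $(2,n)$-torus links, so the hypothesis $g \geq 1$ must be invoked to exclude that case. The bubble-and-saddle bookkeeping in the presence of the two boundary surfaces is the step I expect to require the most care.

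Finally, for the totally geodesic boundary condition when $g \geq 2$: as the text notes, this is equivalent to hyperbolicity of the manifold $DM$ obtained by doubling $M$ across $S \times \{0\} \cup S \times \{1\}$. Since these surfaces are incompressible in $M$ (automatically, as $g \geq 1$), $DM$ is irreducible with incompressible torus boundary; by isotoping any essential annulus or torus in $DM$ into a position transverse to the doubling locus and pushing the pieces back into $M$, one concludes from the previous step that $DM$ is atoroidal and acylindrical. Thurston's hyperbolization theorem for Haken manifolds then supplies a finite-volume hyperbolic metric on $DM$, and Mostow--Prasad rigidity both returns the totally geodesic structure on $\partial M$ and shows that $vol(L)$ is well defined.
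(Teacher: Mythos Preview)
The paper does not actually prove this theorem: it is stated as a result of \cite{SMALL2017} and is simply quoted with a citation, with no proof given in the present paper. So there is no ``paper's own proof'' to compare against here.

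Your proposal, by contrast, is a genuine proof sketch, and it is essentially the strategy carried out in the cited reference \cite{SMALL2017}: place the link in Menasco normal position relative to the projection surface using crossing bubbles, analyze intersections of a putative essential surface with $\Sigma_{\pm}$, and use the alternating/full-disk hypothesis together with primeness (via Theorem~\ref{prime}) to rule out essential spheres, tori, and the various annulus sub-cases; then obtain the totally geodesic boundary by doubling and invoking Thurston hyperbolization. That is the right shape of argument, and your identification of the two genuinely new difficulties over Menasco's classical case (the extra boundary components $S\times\{0,1\}$ generating new annulus cases, and the failure at $g=0$ for $(2,n)$-torus links) is accurate. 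The one place where your outline is vaguer than it should be is the annulus case with one or both boundaries on $\partial N(L')$: in \cite{SMALL2017} this is handled by separate lemmas (their Lemmas 10 and 11, also invoked later in the present paper in the proof of Lemma~\ref{lma: sphere-tori}), and the argument there is not purely a Menasco bubble analysis but uses the structure of $S\times I$ more directly. If you were actually writing out the proof rather than citing it, that is where you would need to supply detail.
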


From this, we can obtain the following theorem. 

\begin{theorem}
\label{virtualalternating}A nontrivial virtual alternating link $L$ that is not a classical 2-braid and that has a prime representation as a surface-link pair coming from a virtual alternating projection is tg-hyperbolic. 
\end{theorem}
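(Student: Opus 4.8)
The plan is to realize $L$ by the ribbon surface of the given prime virtual alternating projection, observe that this surface-link pair is fully alternating, feed it to Theorem~\ref{tg} when the ambient surface has positive genus, and fall back on the classical theory of alternating links when the surface is a sphere.

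\smallskip

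\noindent\emph{Setup and reductions.} Let $P$ be a virtual alternating projection of $L$ whose ribbon surface $(S,L')=R(P)$ is prime. We may assume $P$ is connected: a disconnected virtual alternating diagram presents a split link, which has no tg-hyperbolic complement (the splitting sphere is essential) and is not the situation of the theorem. We may also assume $P$ is reduced: passing to a reduced alternating projection is a sequence of Reidemeister-I moves, which changes neither $L$ nor the genus of the ribbon surface, and does not destroy primeness of the ribbon pair (the arc cut off by a nugatory crossing is unknotted, so such a crossing never witnesses non-primeness). Finally, if the reduced $P$ has no classical crossing it consists only of virtual crossings and $L$ is an unlink, contradicting nontriviality; so $P$ has at least one classical crossing.

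\smallskip

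\noindent\emph{The ribbon pair is fully alternating.} Since $P$ is alternating and the ribbon construction introduces no new classical crossings — at each virtual crossing we merely let one flat band pass over the other — the link $L'$ is alternating on $S$. Moreover the uncapped ribbon surface deformation retracts onto the underlying $4$-valent graph $G$ of $P$, so each boundary circle of that surface bounds a collar annulus in its complement; capping these circles with disks to form the closed surface $S$ turns each such annulus into an open disk. Hence the complementary regions of $L'$ in $S$ are all open disks, i.e.\ $(S,L')$ is fully alternating. It is prime by hypothesis and reduced and connected by the previous paragraph.

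\smallskip

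\noindent\emph{Conclusion.} If $g(S)\ge 1$, Theorem~\ref{tg} applies directly to $(S,L')$, so it is tg-hyperbolic; since $(S,L')$ represents $L$, the virtual link $L$ is tg-hyperbolic (and by Theorem~\ref{thm:hyperbolic-minimal-genus} this realizes its minimal genus). If $g(S)=0$, then $S=S^2$ and $L$ is a classical link presented by the reduced, connected, prime alternating diagram $L'$; capping the two sphere boundary components of $(S^2\times I)\setminus N(L')$ with balls recovers $S^3\setminus N(L')$, so $(S,L')$ is tg-hyperbolic precisely when $L'$ is a hyperbolic link in $S^3$. By Menasco's theorem a reduced, prime, connected alternating diagram with at least one crossing yields a hyperbolic link unless that link is a $(2,n)$-torus link, i.e.\ a classical $2$-braid; since $L$ is nontrivial and not a classical $2$-braid, $S^3\setminus L'$ is hyperbolic, hence $(S,L')$, and therefore $L$, is tg-hyperbolic.

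\smallskip

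\noindent\emph{Main obstacle.} The geometry is entirely imported (Theorem~\ref{tg} for $g\ge 1$, Menasco--Thistlethwaite for $g=0$); the work is in the bookkeeping. The two points requiring care are verifying that the ribbon surface of an alternating projection is genuinely cellular (so that \emph{fully} alternating is automatic) and matching the paper's ball-definition of primeness for $(S,L')$ with the hypothesis needed to invoke Theorem~\ref{tg}. The one genuinely separate case is $g(S)=0$, where Theorems~\ref{prime} and \ref{tg} say nothing and the classical alternating theory must be invoked by hand — and this is exactly where the hypotheses ``nontrivial'' and ``not a classical $2$-braid'' are used.
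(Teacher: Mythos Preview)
Your proof is correct and follows the same route as the paper's: split on the genus of the ribbon surface, invoke Theorem~\ref{tg} when $g\ge 1$, and invoke Menasco's classical theorem when $g=0$. You supply more detail than the paper does—most notably an explicit argument that the ribbon surface of an alternating virtual projection is cellular (hence fully alternating), and some care about reducing and connecting $P$—but the skeleton is identical.
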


\begin{proof} In \cite{menasco}, Menasco proved that for a classical link, a nontrivial prime alternating link in $S^3$ that is not a 2-braid link is hyperbolic. A reduced alternating projection of such a link yields a surface-link pair of genus 0. Thus, in this case, the result holds. For a non-classical virtual alternating link $L$, the surface-link pair $(S, L')$ representing a virtual alternating projection of $L$ must be fully alternating with genus at least one. By Theorem \ref{tg}, it is tg-hyperbolic.  
\end{proof}

\noindent In order for Theorem \ref{virtualalternating} to be useful, we need to know how to determine whether a given virtual alternating link is prime. Let $L$ be such a link and let $(S, L')$ be the surface-link pair representing a virtual alternating projection $P$. Suppose that $L'$ is composite. Then by Theorem \ref{prime}, there exists a disk $D$ on $S$ such that $\partial D$ intersects $L'$ twice and there are crossings in $D\cap L$ after $D \cap L$ is reduced.(Note that a result of Menasco (\cite{menasco}) implies that the 1-tangle $T$ inside $D$ is nontrivial.)
\\\\
But then we can isotope the link $L'$ on $S$ so that the disk appears on the top side of the surface and there are no strands of the knot directly beneath the disk. In other words, if we project vertically down, we obtain a projection of the original virtual link such that there is a disk in the plane containing the projection of the 1-tangle, and there are no virtual or classical crossings of this 1-tangle with the rest of the projection. Hence this projection has a reduced alternating 1-tangle that makes it apparent that $(S, L')$ will not be prime. 
\\\\


\noindent Be that as it may, finding such a projection may be difficult and proving no such exists even more difficult. Hence, we turn to Gauss codes.
\\\\
\noindent Gauss codes were the original motivation for investigation into virtual knot diagrams. See \cite{kauffman}, for instance. 
A Gauss code is a sequence of integers from the set $\{1, \dots, n\}$, each integer corresponding to one of the $n$ crossings, and each appearing twice, each followed by one of the letters $O$ and $U$ (for ``over'' and ``under''). The integers appear in the order one encounters crossings as one travels around the projection, and the letter following each number corresponds to whether one is passing through the crossing on an overpass or an underpass. 
We will always consider Gauss codes cyclically, so there is no particular starting point.

\begin{definition}
A Gauss code is \textit{reduced} if the two appearances of a given integer are never adjacent. 
\end{definition}

\noindent Note that when a Gauss code is not reduced, one can always use generalized Reidemeister moves on the corresponding projection to reduce it, which just eliminates the given integer.

\begin{definition}A \emph{subcode} of a Gauss code is a sequential proper subword that includes both appearances of each integer for all of the integers appearing within it. A \emph{classical subcode} is a subcode that can be realized with no virtual crossings.
\end{definition}

\begin{theorem}
\label{code} A virtual alternating knot $K$ is prime if and only if the Gauss code corresponding to any reduced alternating virtual projection contains no classical alternating subcodes.
\end{theorem}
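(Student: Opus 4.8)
The proof will translate the geometric/topological primeness criterion of Theorem \ref{prime} into the combinatorial language of Gauss codes. Recall that by Theorem \ref{prime}, the reduced fully alternating surface-link pair $(S,L')$ coming from a reduced alternating virtual projection $P$ of $K$ fails to be prime precisely when there is a disk $D$ on $S$ whose boundary meets $L'$ transversely in two points and whose interior contains a crossing. The key is to observe that such a disk $D$ separates $L'$ into two arcs (one inside $D$, one outside), and that this separation is recorded on the Gauss code: as we travel once around the knot, the crossings encountered while we are inside $D$ form a contiguous block, i.e.\ a sequential subword, and since $D$ contains an entire tangle, every crossing inside $D$ has both of its endpoints inside $D$, so this subword contains both appearances of each of its integers — that is, it is a subcode. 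Moreover, since the tangle inside $D$ can be drawn in the plane disk $D$ with no virtual crossings, this subcode is a classical subcode; and since $P$ is alternating, the subcode inherits the alternating $O/U$ pattern. The presence of a crossing inside $D$ guarantees the subcode is nonempty, and (after reducing, using Menasco's result cited above that the $1$-tangle inside $D$ is nontrivial) we get an honest \emph{proper} classical alternating subcode. This establishes one direction: if $K$ is not prime then its reduced alternating Gauss code has a classical alternating subcode.

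For the converse, suppose the Gauss code of $P$ contains a classical alternating subcode $w$. I would show how to build, from $w$, a disk $D$ witnessing compositeness. The integers in $w$ correspond to a set $C_w$ of classical crossings of $P$; because $w$ is a subcode, the two strand-passages through each crossing in $C_w$ both occur during the contiguous arc $\alpha$ of $K$ traced out while reading $w$, so $\alpha$ together with the crossings $C_w$ forms a $1$-tangle $T$ meeting the rest of the diagram only along the two endpoints of $\alpha$ and only in virtual crossings. Because $w$ is a \emph{classical} subcode, $T$ can be redrawn so that it is actually planar and unlinked (in the diagrammatic sense) from the complement — this is where I would invoke the equivalence between Gauss-code realizability and surface embeddings, together with the passage (already sketched in the paragraph preceding the theorem) between a disk on $S$ and a disk in the plane containing a $1$-tangle. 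Thickening $T$ to a disk $D$ on $S$ with $\partial D \cap L'$ equal to the two endpoints of $\alpha$, we have a disk meeting $L'$ twice transversely and containing the crossings of $C_w$; since $w$ is a proper subword there is at least one crossing of $P$ outside $D$ as well, so $D$ genuinely realizes a connected-sum decomposition, and $(S,L')$ is not prime by Theorem \ref{prime}. Combining the two directions, and using that the property ``contains a classical alternating subcode'' is manifestly independent of which reduced alternating virtual projection of $K$ we pick (since primeness of $K$ is), gives the stated biconditional.

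The main obstacle I anticipate is the converse direction, specifically the step that promotes a purely combinatorial ``classical subcode'' into an embedded disk $D$ on the surface $S$ that is disjoint from $L'$ except at two transverse points. One must be careful that the subword $w$ being realizable without virtual crossings \emph{in isolation} does indeed imply that the corresponding $1$-tangle sits inside an honest disk in the ambient surface-link pair $(S,L')$, rather than merely in some auxiliary surface; this requires matching up the minimal-genus/irreducibility machinery (Theorem \ref{thm:kuperberg}) with the cut-and-paste argument. A secondary subtlety is bookkeeping the $O/U$ letters: one must check that ``alternating subcode'' in the combinatorial sense corresponds exactly to the geometric $1$-tangle being a reduced alternating tangle, so that Menasco's nontriviality criterion applies and the disk $D$ actually contains a crossing after reduction. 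I expect these points to be handled by a careful but routine isotopy argument, exactly parallel to the classical-link case but carried out on $S \times I$.
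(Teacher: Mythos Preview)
Your overall approach matches the paper's: both directions go through Theorem~\ref{prime} together with the observation that the Gauss code of the planar projection $P$ coincides with the Gauss code of $L'$ on $S$, so a separating disk on $S$ corresponds to a classical alternating subcode and conversely. The paper is in fact just as terse as you are on the converse step you flag as the main obstacle; it simply asserts that a classical alternating subcode in the Gauss code of $L'$ on $S$ is equivalent to $L'$ being obviously composite on $S$.

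The one real gap is in your handling of the word \emph{any}. Your justification --- the subcode property ``is manifestly independent of which reduced alternating virtual projection of $K$ we pick (since primeness of $K$ is)'' --- quietly assumes what needs to be shown. What your two directions actually establish, for a fixed reduced alternating projection $P$, is the equivalence between ``the Gauss code of $P$ has a classical alternating subcode'' and ``the surface-link pair $(S_P,L'_P)$ built from $P$ fails to be prime''. To conclude that the right-hand side is independent of $P$ (i.e., is a property of $K$ alone), you must know that every reduced alternating virtual projection yields the \emph{same} surface-link pair, namely the unique minimal-genus representative. This is precisely where the paper invokes Lemmas~8 and~9 of \cite{SMALL2017} (a fully alternating surface-link pair admits no cancellation curves, hence is irreducible) followed by Theorem~\ref{thm:kuperberg} (the irreducible representative is unique). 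You do mention Kuperberg's theorem, but you place it in the subcode-to-disk step; its real role is here, pinning down which surface the disk lives on and thereby making ``primeness of $K$'' uniform across all reduced alternating projections.
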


\begin{proof}It is useful to note that the Gauss code obtained for a virtual link projection $P$ of a virtual link $L$ is the same as the Gauss code obtained for the link $L'$ on the surface $S$ coming from the surface-link pair $(S, L')$ representing $P$. Hence, the 1-tangle $T$ appearing in the disk $D$ on $S$ generates a subword of the Gauss code that is classical. Thus, the same is true for the identical Gauss code of the projection $P$.
\\\\
\noindent Let $P$ be any reduced virtual alternating projection of a virtual alternating link $L$. Let $(S, L')$ be the surface-link pair that represents it. Then $L'$ is fully alternating as a link in $S \times I$.
\\\\
\noindent The Gauss code of $P$ contains a classical alternating subcode if and only if the Gauss code of $L'$ on $S$ contains such a code. This occurs if and only if $L'$ is obviously composite on $S$, and therefore if and only if $S \times I \setminus L'$ is composite. 
\\\\
\noindent It only remains to prove that if one reduced virtual alternating projection $P$ of $L$ does not contain a classical alternating subcode, then the same is true for any other. 
In \cite{SMALL2017} (see Lemmas 8 and 9), it was proved that such a surface-link pair can have no cancellation curves. Hence, $S \times I \setminus L'$ must be an irreducible representative of $L$. By Theorem \ref{thm:kuperberg}, it must be the unique such. Hence, if it is prime, so must be all other surface-link pairs representing reduced virtual alternating projections of $L'$.
By Theorem \ref{prime}, it  then follows that if one reduced virtual alternating projection $P$ of $L$ does not contain a classical alternating subcode, then the same is true for any other. 
\end{proof}

\noindent We note that if we have a virtually alternating link of more than one component, with a connected projection,  then we are looking for a subcode of the Gauss code of one component, together possibly with the complete Gauss codes of additional components such that every integer appearing in this set appears twice and together they generate an alternating classical 1-tangle.
\\\\

\subsection*{The $n$-Polygonal Links}

\noindent The rest of this section is devoted to an extended example that demonstrates the  utility of the various theorems in this section. 
\\\\
\noindent We define a \emph{$n$-polygonal virtual link} $\mathcal{P}_n$ to be an alternating virtual link with $n$ virtual crossings to the inside and $n$ classical crossings to the outside as in Figure \ref{fig:polygonal}. Note that if $n$ is divisible by $3$, $\mathcal{P}_n$ is a $3$-component link and otherwise it is a knot.

\begin{theorem}\label{thm:polygonal-genus} For $n \geq 3$, all $n$-polygonal virtual links are tg-hyperbolic and their minimal genus is $g(\mathcal{P}_n) = \lceil\frac{n}{2}\rceil-1$.
\end{theorem}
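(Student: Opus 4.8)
The plan is to split the claim into two parts: tg-hyperbolicity and the genus computation. For tg-hyperbolicity, I would apply Theorem \ref{virtualalternating}, so the work reduces to verifying that $\mathcal{P}_n$ is nontrivial, is not a classical 2-braid, and that the surface-link pair coming from the standard polygonal projection $P$ in Figure \ref{fig:polygonal} is prime. Nontriviality and the 2-braid exclusion will follow once we know the minimal genus is at least $1$ (which holds for $n \geq 3$ since $\lceil n/2 \rceil - 1 \geq 1$). For primeness, I would invoke Theorem \ref{code}: it suffices to check that the Gauss code of $P$ contains no classical alternating subcode. Here the geometry of the polygonal diagram is what makes this tractable — the $n$ classical crossings sit ``on the outside'' arranged cyclically, and any proper sequential subword of the Gauss code that is classical (realizable without virtual crossings) would have to correspond to a sub-arc that cuts off a nontrivial classical 1-tangle; the rotational structure of $\mathcal{P}_n$, in which the classical crossings are cyclically interleaved with passes forced through the virtual core, should obstruct this. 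I would make this precise by writing down the Gauss code explicitly as a function of $n$ and checking directly that no contiguous subword both closes up (each symbol appearing twice) and avoids virtual crossings.

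For the genus formula, the upper bound $g(\mathcal{P}_n) \leq \lceil n/2 \rceil - 1$ should come from exhibiting an explicit surface-link pair of that genus: build the ribbon surface $R(P)$ of the polygonal projection, cap off the boundary components with disks, and compute the resulting genus via an Euler characteristic count — the ribbon surface has one disk per classical crossing, flat bands along the connecting strands, with the virtual crossings contributing the handles, and capping the boundary circles. One must then check this surface is actually minimal genus, i.e. irreducible in the sense of Kuperberg (Theorem \ref{thm:kuperberg}); but this is exactly the content invoked in the proof of Theorem \ref{code} — a fully alternating surface-link pair coming from a reduced alternating projection has no cancellation curves (\cite{SMALL2017}, Lemmas 8 and 9), hence is irreducible, hence realizes the minimal genus. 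So once I know $P$ is reduced and fully alternating, the genus of the capped ribbon surface \emph{is} $g(\mathcal{P}_n)$, and the whole formula follows from the Euler characteristic bookkeeping.

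I expect the main obstacle to be the primeness verification, i.e. showing the Gauss code of $\mathcal{P}_n$ admits no classical alternating subcode. The subtlety is twofold: first, for $n$ divisible by $3$ the link has three components, so per the remark following Theorem \ref{code} one must rule out subcodes built from an arc of one component together with the full Gauss codes of the other components, not merely single-component subwords; second, one needs a clean combinatorial description of when a subword of this particular code is ``classical,'' which amounts to an interlacement/planarity condition on the chord diagram. I would handle this by describing the interlacement graph of the full Gauss code of $\mathcal{P}_n$ and arguing that every proper closed sub-collection of chords that is planar fails to be alternating, using the cyclic symmetry to reduce to finitely many cases. The Euler characteristic computation and the minimal-genus identification, by contrast, I expect to be routine given the machinery already assembled in this section.
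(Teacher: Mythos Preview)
Your overall architecture matches the paper's: build the ribbon surface of the standard polygonal projection, count its boundary components (three when $n$ is odd, four when $n$ is even) to get $g=\lceil n/2\rceil-1$ via Euler characteristic, invoke the fully alternating condition together with \cite{SMALL2017} to conclude this is the minimal-genus representative, and then deduce tg-hyperbolicity from primeness via Theorem~\ref{code}.

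The one place where the paper is sharper is exactly the step you flagged as the main obstacle. You propose to rule out \emph{classical alternating} subcodes by analyzing the interlacement graph and checking a planarity condition on proper closed chord sub-collections. The paper avoids all of that by proving the stronger statement that the Gauss code of $\mathcal{P}_n$ has \emph{no subcodes whatsoever}, classical or not. The argument is purely combinatorial: write $1,\dots,n$ three times in a row and delete every third entry; what remains is the Gauss code, naturally partitioned into three ``thirds'' with each integer appearing in exactly two of them. Any putative subcode containing an integer $a$ must, together with both copies of $a$, swallow either one full third or two; either way one is forced (using that $a+1$ or $a+2$ lives in a different pair of thirds) to absorb all three thirds, contradicting properness. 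The three-component case $3\mid n$ is handled by the same thirds decomposition. This eliminates the need for any planarity or interlacement analysis and disposes of the multi-component subtlety you were worried about in one stroke.
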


\begin{proof}
    Let $L$ be the $n$-polygonal virtual link in the standard projection. We construct the ribbon surface corresponding to it. The genus of this surface is $g = \frac{n - b + 2}{2}$ where $n$ is the number of crossings and $b$ is the number of boundaries. 
 \\\\
 \noindent In Figure \ref{fig:polygonal}, we see the four boundary components for the ribbon surface of the  6-polygonal link. In general, there will always be one outermost component as in blue, one intermediate component as in red, and, as also appears here in blue,  two innermost components for $n$ even and one innermost component for $n$ odd.Then $g = \frac{n-4+2}{2} = \frac{n-2}{2} = \frac{n}{2}-1 = \ceil{\frac{n}{2}}-1$ in the even case and $g = \frac{n-3+2}{2} = \frac{n+1}{2}-1 = \ceil{\frac{n}{2}}-1$ in the odd case.

    \begin{figure}
        \centering
        \includegraphics[width=0.6\textwidth]{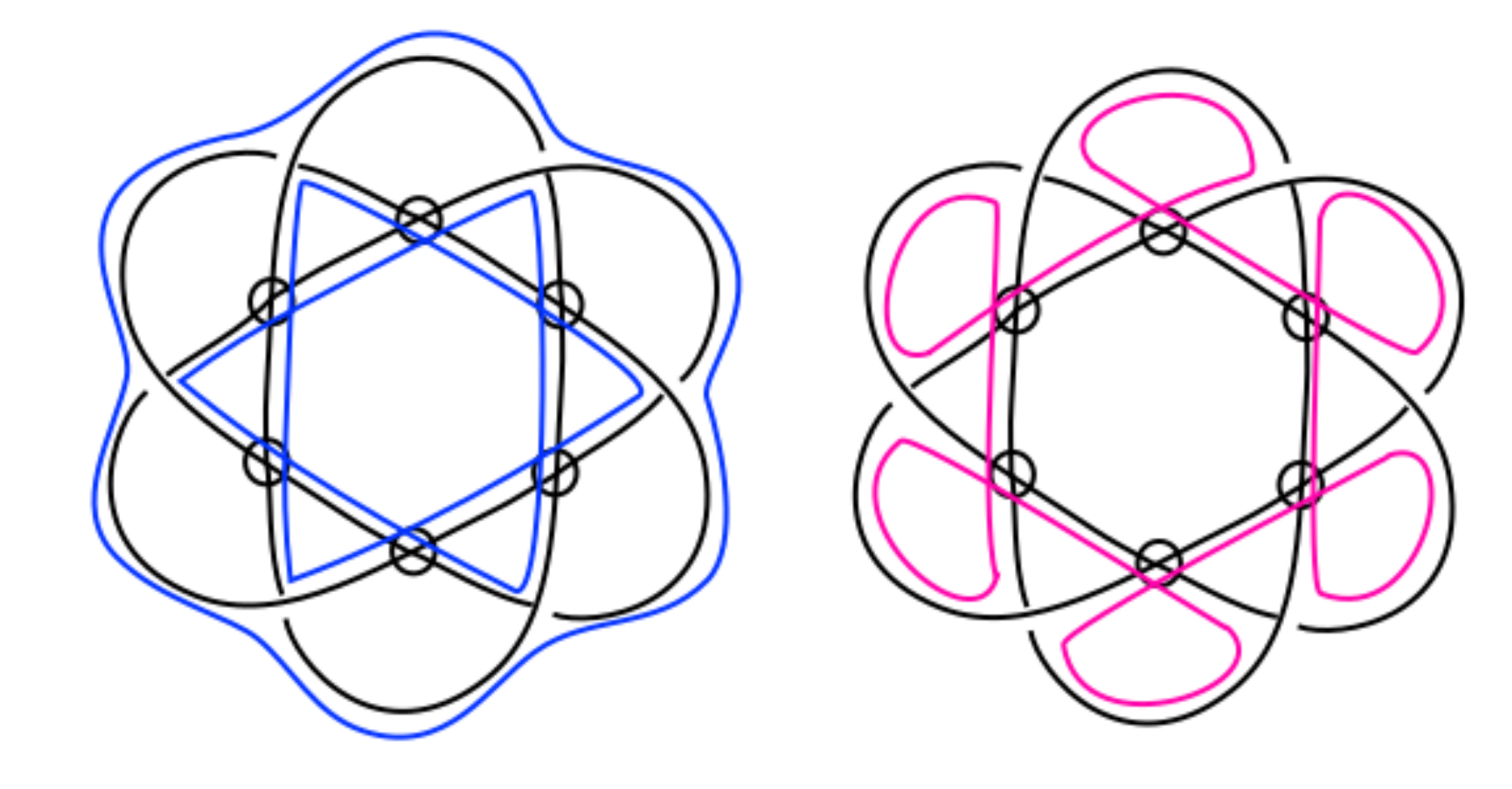}
        \caption{
        The boundaries of the ribbon surface, 3 for $n$ odd and 4 for $n$ even.}
        \label{fig:polygonal}
    \end{figure}
    
    \noindent Since $K$ is virtually alternating, the surface-link pair representing it is fully alternating. As mentioned previously, this implies by \cite{SMALL2017} that this is a minimal genus representative.
    \\\\
    \noindent To see that the complement is tg-hyperbolic, by Theorem \ref{tg}, it suffices to prove that the polygonal links are prime. By Theorem \ref{code}, we know this to be the case if and only if the corresponding Gauss code contains no classical alternating subcode.We argue that here, by showing there are no subcodes, classical or not. 
    \\\\
    \noindent Note that if there is a subcode, its complement is also a subcode.
\\\\
    \noindent In the case of a knot, so $n$ is not divisible by 3, the Gauss code of such a link can be obtained in the following manner. Write the integers $1,\dots, n, 1\dots n, 1,\dots, n$. Then cross off every third integer. By the time we are done, we will have crossed out one copy of each integer, leaving the two copies necessary for this to be a valid Gauss code. We will not add in the letters O and U since they play no role here.
\\\\
    \noindent The Gauss code naturally falls into three thirds, corresponding to each copy of the original $1,\dots,n$. Call these $X_1,X_2,$ and $X_3$. Each integer will appear twice, once each in two of the three thirds. For integer $a$, it appears in a different pair of thirds than either $a+1$ or $a+2$. The exact pair of thirds it falls into is determined by whether $n \equiv 1 (\mod{3})$ or $n \equiv 2 (\mod {3})$.
    \\\\
    \noindent Without loss of generality, suppose the integer 1 is in a subcode. Then there are two options. All entries between the two copies of 1 must be in the subcode or all entries outside the two copies of 1 are in the subcode. In particular either two of the thirds are in the subcode or one of the thirds is in the subcode. If two of the thirds are in the subcode, then every integer appears in the subcode, in which case the subcode contains all of the code, a contradiction. If one of the thirds is in the subcode, then 2 or 3 must be in the subcode, and they each fall into different pairs of thirds than 1. So the subcode again must contain at least two of the thirds, which means it must be the entire code, a contradiction.
    \\\\
    \noindent When $n $ is divisible by 3, so $L$ is a 3-component link, the Gauss codes of the three components are each obtained from $1, \dots, n$ by crossing off every third entry, starting either with the first, second or third entry. Call these three codes thirds for convenience.  Then a subcode must be a word in one of the three codes together with either both remaining thirds or one additional third or no additional thirds. Note that the complementary set must also be a subcode. Since as before, if a subcode contains two thirds, it must be all three codes, and therefore not a subcode at all, the only possibility is that the subcode contains one component's full code and a subword of another. The same then holds for the complementary code. But then consider the one component's full code in the subcode. It contains integers $a$ and $a+1$, which have their second copies occurring in different remaining thirds. So the subcode would have to contain entries from all three thirds, a contradiction.
    
\end{proof}

\section{Hyperbolicity of Certain Non-Alternating Virtual Links}\label{kishino}

In this section, we give proofs that certain non-alternating families of virtual knots are tg-hyperbolic, including the family of knots obtained by taking a reduced prime alternating projection and converting one crossing to virtual and the family of so-called generalized n-Kishino knots. In the process, we introduce a certain "composition" of a half-Kishino knot with any tg-hyperbolic knot that always results in another tg-hyperbolic knot with genus increased by 1.

\subsection{The 1-virtual alternating links}

\begin{definition} Let $P$ be any reduced connected alternating classical  projection such that there does not exist a  circle in the projection plane intersecting the projection twice with crossings to both sides. A {\it 1-virtual alternating link} is a link with projection obtained by converting any single crossing of $P$ into a virtual crossing.
\end{definition}

 Note that such an $L$ is not a virtual alternating link. 

\begin{theorem}\label{onevirtual} A 1-virtual alternating link $L$ is tg-hyperbolic.
\end{theorem}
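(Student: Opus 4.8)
The plan is to realize the $1$-virtual alternating link $L$ as a link in a thickened surface and then reduce to Theorem \ref{tg} by showing the resulting surface-link pair is fully alternating and prime. First I would take the reduced connected alternating classical projection $P$ and form its ribbon surface $R(P)$: away from the one crossing that we make virtual, $R(P)$ is just a neighborhood of the classical alternating diagram on $S^2$, and the one virtual crossing forces exactly one band to pass over another, which genus-wise amounts to attaching a single handle. So the surface-link pair $(S,L')$ representing $L$ has genus $1$ (one checks this either directly from the band picture, or from $g=\frac{n-b+2}{2}$ after counting boundary components). The key point is that on $S$ the diagram $L'$ is still alternating — converting a crossing to virtual does not disturb the over/under pattern at the remaining classical crossings — and its complementary regions are disks because $P$ was connected with all faces disks on $S^2$ and the handle attachment along the band does not create a non-disk face. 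Hence $(S,L')$ is fully alternating of genus $1$.

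The substance is primeness. By Theorem \ref{prime}, it suffices to show there is no disk $D$ on $S$ with $\partial D$ meeting $L'$ transversely twice and with crossings trapped inside $D\cap L'$. Suppose such a $D$ exists. If $D$ can be isotoped off the handle — i.e. $D$ lies in the $S^2$ part of $S$ — then $\partial D$ is a circle in the original projection plane of $P$ meeting the (now-virtualized, but as a set of arcs identical) diagram twice with crossings to both sides; since $\partial D$ bounds a disk in $S^2$ as well, this directly contradicts the hypothesis on $P$ that no such circle exists. If $D$ genuinely uses the handle, then $\partial D$, being a separating curve on the genus-$1$ surface $S$ that bounds the disk $D$, must be trivial in $H_1(S)$, so it also bounds on the complementary side; one of the two sides contains the single virtual crossing's handle. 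The remaining case to rule out is when the essential nonseparating curve issue arises — but $\partial D$ bounds $D$, so it is separating, and a separating curve on a genus-$1$ surface is either trivial or bounds a once-punctured torus; in the latter case one compresses across the handle and is again reduced to a disk in $S^2$ whose boundary meets the diagram twice with crossings on both sides, contradicting the hypothesis on $P$.

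I would finish by invoking Theorem \ref{tg}: a prime fully alternating link in an orientable surface of genus at least one is tg-hyperbolic, so $(S,L')$ is tg-hyperbolic, and therefore the virtual link $L$ is tg-hyperbolic by definition. By Theorem \ref{thm:hyperbolic-minimal-genus} the genus-$1$ pair $(S,L')$ is automatically the unique minimal genus representative.

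\textbf{Main obstacle.} The delicate step is the primeness argument, specifically handling a hypothetical dividing disk $D$ whose boundary is forced to interact with the handle created by the virtual crossing: I need to argue carefully that any such $D$ can be pushed off the handle (or compressed along it) to produce a genuine obstructing circle in the original planar projection of $P$, thereby contradicting the defining hypothesis on $P$. A clean way to organize this is to note that the minimal genus pair has no cancellation curves (as in Lemmas 8 and 9 of \cite{SMALL2017}), so any simple closed curve on $S$ disjoint from $L'$ that bounds a disk on one side must be inessential, which pins down the position of $\partial D$ relative to the single handle; the rest is the classical Menasco-style analysis of the trapped tangle. I also want to double-check the one genus computation — that virtualizing exactly one crossing raises the genus by exactly one and keeps all faces disks — since the whole reduction to Theorem \ref{tg} rests on landing in genus $\geq 1$ with a fully alternating diagram.
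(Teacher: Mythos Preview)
Your approach has a fatal gap at the very first step: the surface-link pair $(S,L')$ is \emph{not} fully alternating, so Theorem~\ref{tg} does not apply. The paper states this explicitly just after the definition: ``Note that such an $L$ is not a virtual alternating link.'' Concretely, at the crossing you virtualize, one strand passed through as an overpass and the other as an underpass; along the overpassing strand the local pattern was $\dots U,O,U\dots$, and deleting the middle $O$ leaves $\dots U,U\dots$, while along the other strand $\dots O,U,O\dots$ becomes $\dots O,O\dots$. So the diagram on $S$ has two consecutive overcrossings and two consecutive undercrossings and cannot be alternating. Your sentence ``converting a crossing to virtual does not disturb the over/under pattern at the remaining classical crossings'' is precisely the false step; it does not disturb the labels at the remaining crossings, but it destroys the alternation between them. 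All of your subsequent work on primeness is moot, since there is no fully alternating hypothesis left to feed into Theorem~\ref{prime} or Theorem~\ref{tg}.

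The paper's proof takes an entirely different route that avoids alternation on the surface. It observes that the genus-one surface-link pair has complement homeomorphic to a link complement in $S^3$ obtained by adding a Hopf link to $L'$. Projecting $L'$ back to the plane recovers an alternating diagram (the original $P$ with the virtual crossing resolved one way), and then one builds up to the Hopf-link picture in two steps: first add a single unknotted component encircling two strands (an augmented alternating link, hyperbolic by \cite{Adams2}), then replace that component by two linked components via the Chain Lemma of \cite{AMR}, which again preserves hyperbolicity. Since $S$ is a torus, hyperbolicity and tg-hyperbolicity coincide. If you want to repair your argument, you would need a genuinely non-alternating tool; the fully-alternating machinery of Section~\ref{alternating} is simply unavailable here.
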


\begin{proof}
The surface $S$ of the surface link pair $(S, L')$ corresponding to the given projection is a torus. The link $L'$ sits on the torus as in Figure \ref{fig:onevirtualtorus}, where $T$ is an alternating tangle.

\begin{figure}[htbp]
    \centering
    \includegraphics[scale=.4]{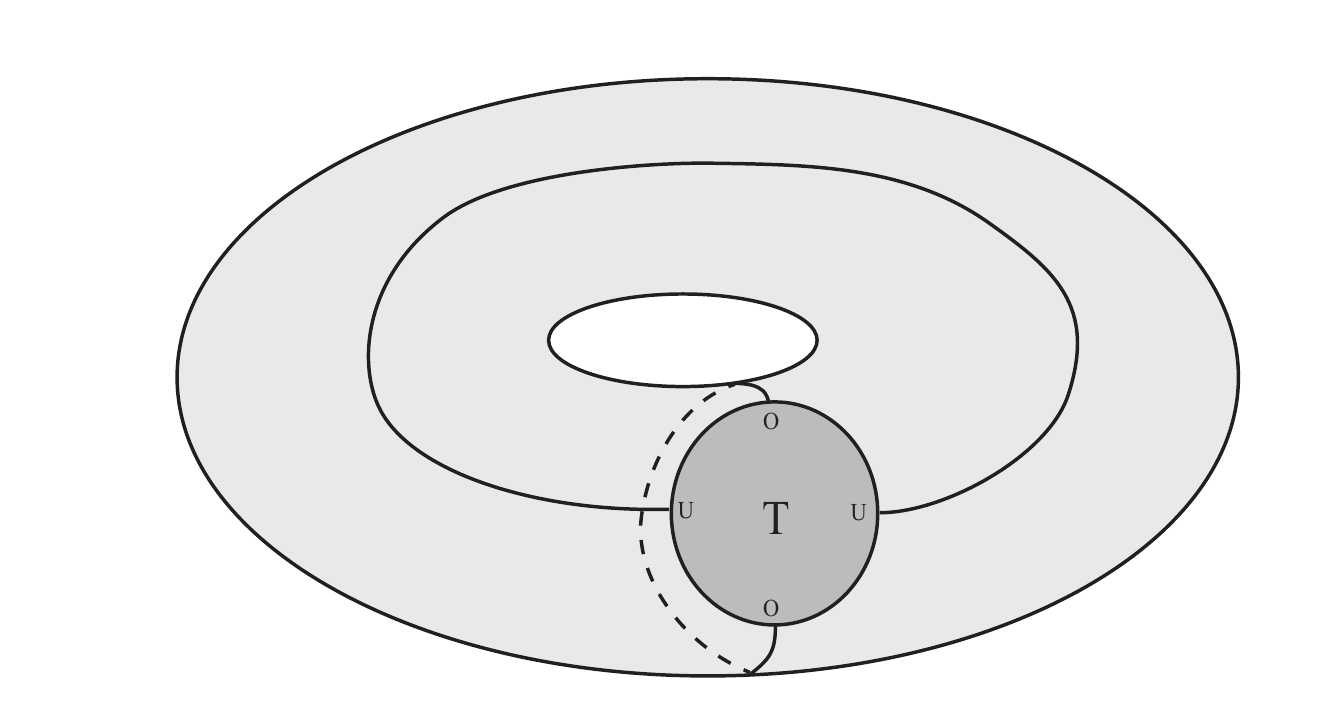}
    \caption{The surface-link pair for a 1-virtual alternating projection.}
    \label{fig:onevirtualtorus}
\end{figure}

Then $S \times I \setminus L'$ can be realized as a link complement in the 3-sphere by adding a Hopf link appropriately to the link $L'$.

\begin{figure}[htbp]
    \centering
    \includegraphics[scale=.6]{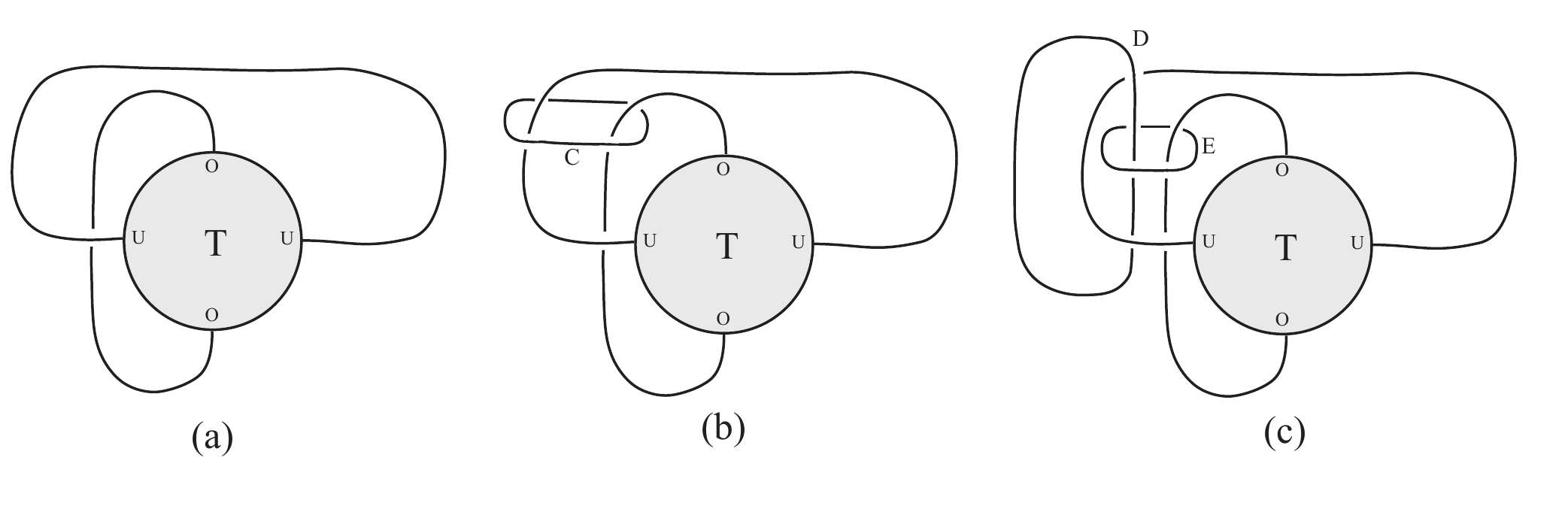}
    \caption{A 1-virtual prime alternating link is tg-hyperbolic.}
    \label{fig:onevirtual}
\end{figure}

\bigskip 

Projecting $L'$ to the plane yields the alternating projection in Figure \ref{fig:onevirtual} (a). Adding a single trivial component as in Figure \ref{fig:onevirtual} (b) results in a so-called augmented alternating link and preserves hyperbolicity by Theorem 4.1 of \cite{Adams2}. Replacing that component by two components as in Figure \ref{fig:onevirtual} (c) also preserves hyperbolicity (which is equivalent to tg-hyperbolicity when $S$ is a torus) by the Chain Lemma of \cite{AMR}.

\end{proof}

Note that this also implies that 1-virtual alternating links are nontrivial and non-classical,  and that they are all genus 1, since tg-hyperbolicty of the torus-link pair implies the genus cannot be less than 1. (See also \cite{dye-kauffman} for when 1-virtual links must be non-classical.)

\medskip
\subsection{The generalized n-Kishino knots}

We now consider a second family of non-alternating virtual knots called  the generalized $n$-Kishino knots. These are virtual knots that appear as in Figure \ref{fig:n_kishino}.  There are $n$ virtual crossings and $2n$ classical crossings, each classical crossing of which can go either way. But note that for any choice of crossings, none of these knots can be made virtually alternating. Thus, the previous theorems do not apply. Note that the generalized 2-Kishino knot with a particular choice of crossings is the original Kishino knot.

\begin{figure}[htbp]
    \centering
    \includegraphics[scale=.3]{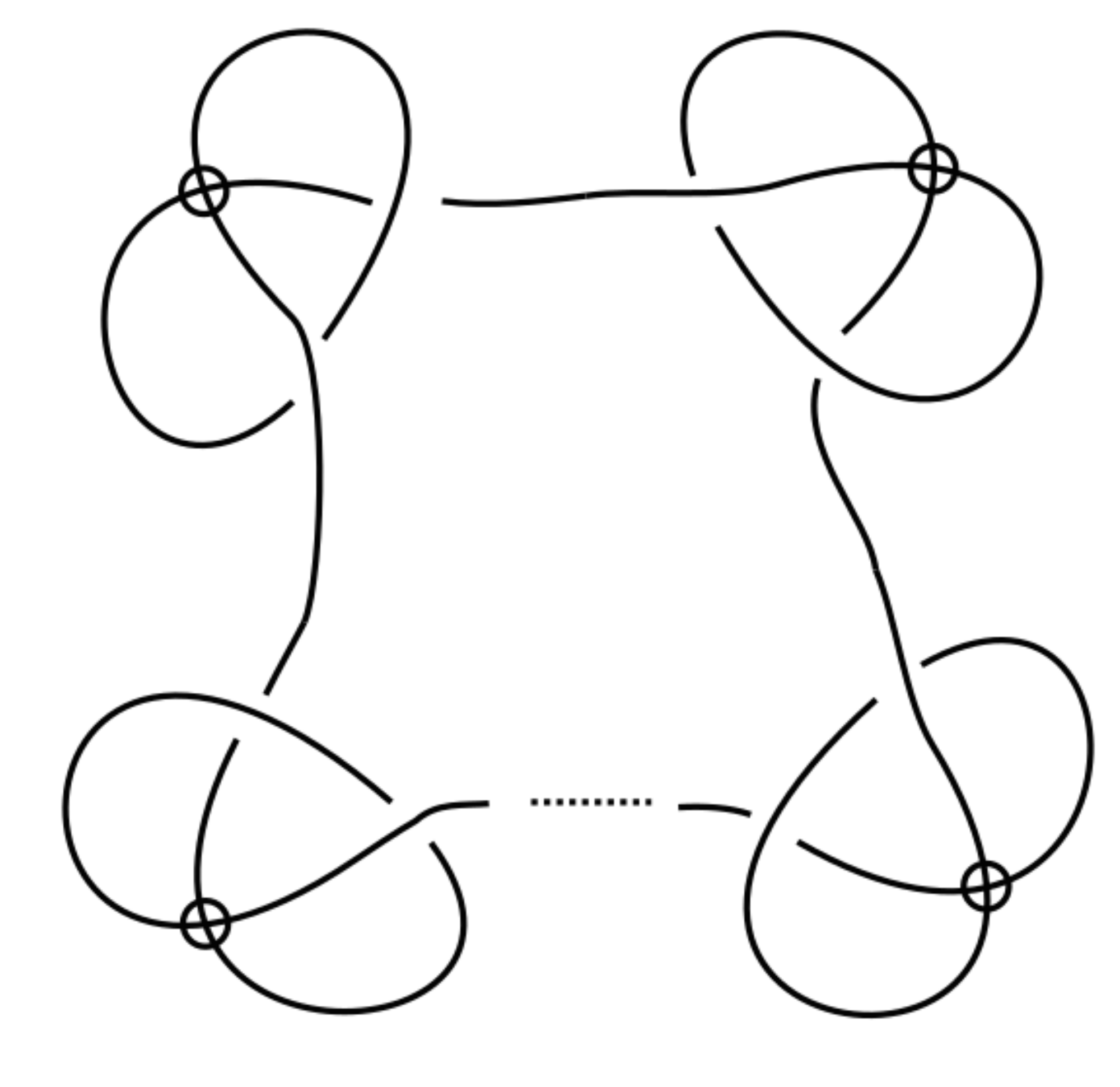}
    \caption{A diagram of a generalized $n$-Kishino knot with a particular choice of  classical crossings.}
    \label{fig:n_kishino}
\end{figure}

\begin{theorem}\label{thm:kishino-hyperbolic}Every generalized $n$-Kishino knot $K_n$ is tg-hyperbolic with genus $n$. 
\end{theorem}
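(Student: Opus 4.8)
The plan is to realize the complement of a generalized $n$-Kishino knot $K_n$ inside a thickened surface as a fully alternating surface-link pair, at the cost of introducing extra trivial components that can later be absorbed by hyperbolicity-preserving operations, exactly as in the proof of Theorem~\ref{onevirtual}. First I would identify the natural surface-link pair $(S, L')$ associated to the standard diagram of $K_n$ in Figure~\ref{fig:n_kishino}: each of the $n$ virtual crossings forces a handle, so the ribbon surface construction produces a surface of genus $n$, and one must check (via the boundary-count formula $g = \frac{c - b + 2}{2}$ used in the proof of Theorem~\ref{thm:polygonal-genus}, with $c = 2n$ classical crossings) that the number of boundary components is exactly $b = 2$, giving genus $n$. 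This handles the genus lower bound once tg-hyperbolicity is established, since tg-hyperbolicity of $(S,L')$ forces it to be the minimal genus representative by Theorem~\ref{thm:hyperbolic-minimal-genus}, and the ribbon/Turaev surface being fully alternating already certifies minimality by \cite{SMALL2017}; so the genus claim reduces to showing the ribbon surface has genus $n$ and that the pair is minimal.

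For tg-hyperbolicity itself, I would proceed by the composition/augmentation strategy that Section~\ref{kishino} advertises: view $K_n$ as built by iteratively appending a ``half-Kishino'' piece to a simpler tg-hyperbolic knot, increasing genus by one each time. Concretely, one augments the planar projection of $K_n$ by adding trivial circles linking the strands that pass through the virtual crossings — one circle per handle — turning the virtual crossings into honest classical crossings of an alternating diagram living on $S$, at the price of an augmented link. By Theorem~4.1 of \cite{Adams2} this augmentation preserves hyperbolicity, and then by the Chain Lemma of \cite{AMR} one can replace each augmenting circle by a chain of components so that the resulting diagram on $S$ is genuinely fully alternating; one then appeals to Theorem~\ref{tg} to conclude the augmented pair is tg-hyperbolic, and reverses the Chain Lemma/augmentation steps (each of which preserves hyperbolicity) to deduce that $(S \times I)\setminus L'$ is tg-hyperbolic. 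The alternative, cleaner route — and the one I expect the authors take — is an induction on $n$: show that appending one half-Kishino tangle to any tg-hyperbolic surface-link pair yields a tg-hyperbolic surface-link pair of genus one greater, using that the gluing is along an essential annulus/tangle decomposition that cannot create essential spheres, tori, or annuli in the complement.

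The main obstacle will be the inductive gluing step: proving that attaching the half-Kishino piece introduces no new essential spheres, tori, or annuli, and in particular that the new thickened-handle region is incompressible and boundary-incompressible. This requires a careful normal-surface / innermost-disk argument analyzing how a putative essential surface meets the gluing annulus, showing it can be isotoped off it, whereupon one reduces to essentiality in the pieces, which are tg-hyperbolic by the inductive hypothesis (and, for the base case $n=0$ or $n=1$, by Theorem~\ref{onevirtual} or a direct SnapPy/Thurston verification). A secondary subtlety is bookkeeping the genus: one must verify that the half-Kishino composition adds exactly one handle and does not accidentally allow a destabilization — but this follows since the resulting pair is still fully alternating (or is shown to have no cancellation curves, as in Lemmas~8 and~9 of \cite{SMALL2017}), forcing it to be the unique irreducible, minimal-genus representative by Theorem~\ref{thm:kuperberg}.
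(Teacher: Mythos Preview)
Your second route (induction via the half-Kishino append) is exactly the paper's approach, and you correctly identify the inductive gluing step as the main issue. But several of the details you propose would not go through.

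\medskip

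First, the augmentation/Chain Lemma route does not work here. The generalized Kishino knots are explicitly \emph{not} virtually alternating for any choice of crossings, so there is no way to land in the fully alternating setting of Theorem~\ref{tg} by adding unknotted circles. Moreover, Theorem~4.1 of \cite{Adams2} and the Chain Lemma of \cite{AMR} are results about links in $S^3$; they were only invoked in Theorem~\ref{onevirtual} because the genus-one complement embeds in $S^3$ via a Hopf link. For genus $\geq 2$ you have no such embedding, so this route is not available.

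\medskip

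Second, the base case is not a SnapPy check or an application of Theorem~\ref{onevirtual}; it is $n=2$, and the paper proves it by hand (Lemma~\ref{Kishino2}). Minimal genus for $n=2$ comes from the surface bracket polynomial argument of \cite{dye-kauffman}, not from Lemmas~8--9 of \cite{SMALL2017} (those require a fully alternating projection, which the Kishino does not have). With minimal genus in hand, Lemma~\ref{lma: sphere-tori} reduces tg-hyperbolicity to the nonexistence of essential tori, and the paper rules those out by slicing $S_2\times I \setminus K'$ along two twice-punctured meridional annuli into a handlebody $W$ and running an innermost/outermost argument on $T\cap W$, using that $\pi_1(W)$ is free and that the strands entering each cut annulus lead to distinct cut annuli.

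\medskip

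Third, the inductive step (Lemma~\ref{kishinoappend}) is not simply ``reduce to the pieces by the inductive hypothesis.'' On the new half-Kishino side $Y$, the paper \emph{doubles} across the separating annulus $E$ to obtain a genus-two generalized Kishino knot and then invokes the base case Lemma~\ref{Kishino2} to kill vertical annuli and essential torus pieces in $Y$. Only on the old side $X$ does one use the inductive hypothesis. Minimality of genus in the inductive step is proved directly by showing no vertical annuli exist (again via this doubling trick), not via any alternating criterion.
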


To prove this, we first prove a general fact that will also be useful in other contexts. We then prove that the genus two generalized Kishino knot is tg-hyperbolic. And then we prove that for any tg-hyperbolic link, appending a half-Kishno knot preserves tg-hyperbolicity and increases genus by one. 

\begin{lemma}\label{lma: sphere-tori}
If $(S,K)$ is a surface-knot pair of minimal genus with $g(S) \geq 2$, and $(S\times I) \setminus K$ does not admit any essential tori, then $(S,K)$ is tg-hyperbolic.
\end{lemma}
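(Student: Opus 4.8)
The plan is to invoke Thurston's hyperbolization criterion for the manifold $(S \times I) \setminus N(K)$, which says that this manifold is tg-hyperbolic precisely when it contains no essential spheres, disks, annuli, or tori (with the understanding, recorded earlier in the paper, that essential disks need not be considered separately, since $\partial(S\times I)$ is incompressible and an essential boundary disk would produce an essential sphere). By hypothesis there are no essential tori, so it suffices to rule out essential spheres and essential annuli. First I would dispose of essential spheres: since $(S,K)$ has minimal genus, Kuperberg's characterization of irreducible (minimal-genus) representatives — the nonexistence of essential spheres, disks, or annuli with both boundary components on $\partial S \times I$ — already tells us there are no essential spheres, and no essential annuli of the ``both-boundaries-on-$\partial(S\times I)$'' type. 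So the whole content reduces to ruling out the remaining kinds of essential annuli: those with at least one boundary component on $\partial N(K)$.

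The heart of the argument is therefore the annulus analysis. I would let $A$ be a properly embedded essential (incompressible, not boundary-parallel) annulus in $M = (S\times I)\setminus N(K)$ and split into cases according to where $\partial A$ lies. If both components of $\partial A$ lie on $\partial N(K)$, then $A$ is either a Seifert-type annulus cobounding an essential torus (a regular neighborhood of $A\cup(\text{a sub-annulus of }\partial N(K))$ has torus boundary), contradicting the no-essential-tori hypothesis, or it is boundary-parallel, also a contradiction — here one must also handle the degenerate possibilities (e.g.\ $A$ cobounding a solid torus, which would make $K$ a core of a solid torus and, combined with $g(S)\geq 2$ and minimality, give a contradiction with Kuperberg). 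If one boundary component lies on $\partial N(K)$ and the other on $\partial(S\times I) = S\times\{0,1\}$, then capping off as in the definition of tg-hyperbolicity one should use that $K$ is a \emph{knot} (single component) together with minimality of the genus: such a ``bridge'' annulus would let one isotope $K$ onto the boundary surface or across a handle, contradicting either minimality or the irreducibility guaranteed by Kuperberg. The case of both components on $\partial(S\times I)$ was already excluded by the minimal-genus hypothesis via Kuperberg.

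The main obstacle I anticipate is the careful bookkeeping in the mixed-boundary annulus case: one has to argue that an essential annulus running from $\partial N(K)$ to $S\times\{0\}$ (say) cannot exist without either creating an essential torus (by tubing the $\partial N(K)$-end, using that $K$ is connected so the annulus meets a single component of $\partial N(K)$), or forcing a genus reduction of $S$ contradicting minimality. I would handle this by thickening $A$ and examining the frontier: the annulus together with an annular piece of $\partial N(K)$ forms a torus or an annulus with both ends on $\partial(S\times I)$, and in each sub-case either the no-tori hypothesis or Kuperberg's minimal-genus criterion yields the contradiction. The single-component (knot) hypothesis is what makes the tubing canonical and prevents the annulus from joining two distinct link components. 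Once all annulus cases are excluded, Thurston's criterion gives a finite-volume hyperbolic structure with totally geodesic boundary on the genus-$g(S)\geq 2$ boundary surfaces, which is exactly the assertion that $(S,K)$ is tg-hyperbolic.
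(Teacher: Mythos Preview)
Your overall architecture matches the paper's: reduce to Thurston's criterion, use minimal genus to kill spheres and the annuli with both boundaries on $\partial(S\times I)$, and then analyze the remaining annulus cases. Where you diverge is in how you dispose of the two residual cases, and there the proposal has a real gap.

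For an essential annulus $A$ with $\partial A\subset\partial N(K)$, your dichotomy ``tubing along $\partial N(K)$ gives an essential torus, or else $A$ is boundary-parallel'' is not exhaustive. The torus $T$ you build can perfectly well be compressible or bound a solid torus $V$ without $A$ being boundary-parallel: this is exactly the cable/torus-knot configuration, where $K$ sits in $V$ as a nontrivial $(p,q)$-curve. In that situation $A$ is genuinely essential, yet \emph{both} tubings produce inessential tori, and $K$ is certainly not the core of $V$ (so your ``degenerate possibility'' clause does not apply). The paper does not try to manufacture an essential torus here at all; instead it invokes Lemma~10 of \cite{SMALL2017}, whose content is that such an annulus forces $g(S)=1$, directly contradicting the hypothesis $g(S)\geq 2$. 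The mixed case (one boundary on $\partial N(K)$, one on $\partial(S\times I)$) is handled the same way via Lemma~11 of \cite{SMALL2017}, again concluding $g(S)=1$. Your tubing sketch for that case is too vague to substitute for this.

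A smaller point: the paper does not get the ``both boundaries on the same $S\times\{i\}$'' annulus for free from Kuperberg. It argues separately that such an annulus traps $K$ in a solid-torus region, which then produces a cancellation curve and contradicts minimality. You fold this into your Kuperberg citation, which overstates what that criterion (as used here) actually delivers.
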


\begin{proof}
We first note that $K$ is prime, since if not, we could create a swallow-follow torus that would contradict the non-existence of essential tori. 
\\\\
We also note that because $(S,K)$ is minimal genus, $S \times I \setminus K$ cannot admit any essential annuli with both boundary components on $\partial (S \times I)$, one on $S \times \{0\}$ and one on $S \times \{1\}$ as such an annulus would allow us to reduce genus. Such an annulus is called a {\it vertical annulus} and the curve on $S$ corresponding to it is what we previously called a {\it cancellation curve}.
\\\\\
We also note that there can be no essential spheres. If such a sphere existed, then in $S \times I$, it would have to bound a ball. So for the sphere to be essential, the knot would have to be to the ball side, meaning that there were numerous cancellation curves and the surface-link pair could not be minimal genus. 
\\\\
Now we show that $S \times I \setminus K$ cannot admit an essential annulus $A$ with both boundary components on the \textit{same} (without loss of generality $S\times \{0\}$) boundary of $S \times I$. Note that both boundary components of such an annulus are isotopic in $S \times I$ and hence are parallel on $S$.  Then, $A$ separates $S \times I$ into two components $C_1$ and $C_2$, so that $C_1$ is homeomorphic to $S \times I$, and $C_2$ is homeomorphic to a solid torus, such that $\partial A$ appears as two $(p,1)$-curves on $C_2$ for some $p \in \N$. Because $A$ is essential, $K$ must be completely contained in $C_2$. Thus, we can find a projection of $K$ that stays in a neighborhood of one of the boundaries of $A$. Then $(S, K)$ admits a cancellation curve, contradicting the genus-minimality of $(S,K)$.
\\\\
Thus, we can restrict ourselves to the case where a candidate essential annulus $A$ in $(S \times I) \setminus \Int{(N(K))}$ has: (1) boundary strictly on $\partial N(K)$ or (2) boundary on both $\partial S \times I$ and $\partial N(K)$. 
\\\\
In the first case, by the same argument as in Lemma 10 of \cite{SMALL2017}, we conclude that $g(S) = 1$, a contradiction.

Case (2) is eliminated by applying the argument from Lemma 11 of \cite{SMALL2017} to again show the genus is 1, a contradiction. 
\\
Thus, $(S \times I) \setminus K$ does not admit any essential spheres/tori/annuli, so Thurston's Hyperbolization Theorem gives the result.
\end{proof}

Next, we prove the genus two case.

\begin{lemma}\label{Kishino2}The genus two generalized Kishino knot $K$ is tg-hyperbolic.
\end{lemma}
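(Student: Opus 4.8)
The plan is to deduce the statement from Lemma \ref{lma: sphere-tori}. So I must supply two ingredients: (i) the natural surface-knot pair $(S,K)$ attached to the genus two generalized Kishino knot has minimal genus, with $g(S)=2$; and (ii) $(S\times I)\setminus K$ contains no essential torus. Granting these, Lemma \ref{lma: sphere-tori} immediately yields tg-hyperbolicity (and, via Theorem \ref{thm:hyperbolic-minimal-genus}, uniqueness of the representative), and the genus is $2$.

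For ingredient (i), I would start from the diagram of $K_2$ in Figure \ref{fig:n_kishino} with $n=2$, build its ribbon surface $R(P)$, cap off the boundary circles, and count components: with $c=4$ classical crossings one gets $g(S)=(c-b+2)/2$, which I expect to equal $2$ independently of the four crossing choices. To see this pair is minimal genus I would rule out cancellation curves directly, using the Gauss-code/disk analysis of \cite{SMALL2017} (the relevant Lemmas 8 and 9 there); alternatively, for the classical Kishino knot itself one may simply quote the computation of \cite{dye-kauffman} that its virtual genus is $2$, and note that the same obstruction is insensitive to the crossing choices. Either way, $(S,K)$ has no cancellation curve, hence by Theorem \ref{thm:kuperberg} it is the unique minimal genus representative, with $g(S)=2\ge 2$ as required.

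For ingredient (ii) I would exploit that $\pi_1(S\times I)=\pi_1(S)$ is a genus two surface group, which is word-hyperbolic and therefore contains no $\mathbf{Z}\oplus\mathbf{Z}$; hence every embedded torus in $S\times I$ is compressible in $S\times I$. Thus an essential torus $T\subset(S\times I)\setminus N(K)$ is incompressible in the knot exterior but compresses in $S\times I$, so any compressing disk for $T$ in $S\times I$ must meet $K$. Surgering $T$ along such a disk and tracking the tube by a standard innermost-disk argument, $T$ must bound a solid torus $V\subset S\times I$ with $K\subset V$ essentially embedded, or $T$ must lie in a ball $B\subset S\times I$ with $K\cap B$ a knotted tangle. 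In the first case $T$ is (isotopic to) a swallow–follow torus, forcing $K$ to be composite on $S$; in either case one produces a cancellation curve on $S$. Both conclusions contradict what was established in (i) — minimality of the genus, and primeness of $K$, which I would verify directly from the diagram (no $2$-sphere meets $K$ transversely in two points with knotting on both sides). Combined with the absence of essential spheres and annuli furnished by the proof of Lemma \ref{lma: sphere-tori}, Thurston's hyperbolization theorem gives the result.

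The main obstacle I anticipate is the essential-torus analysis in step (ii): organizing the finitely many ways a compressible-but-essential torus can be positioned in this particular complement and reducing each to a violation of primeness or genus-minimality. The ribbon-surface genus count, the cancellation-curve exclusion, and the primeness check should all be routine.
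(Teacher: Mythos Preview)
Your overall plan—reduce to Lemma \ref{lma: sphere-tori} by checking minimal genus and ruling out essential tori—matches the paper. But both ingredients have problems as you've written them.

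For (i), the appeal to Lemmas 8 and 9 of \cite{SMALL2017} does not apply: those lemmas concern \emph{fully alternating} surface-link pairs, and the generalized Kishino projection is never alternating (indeed the point of this section is that these knots are non-alternating). The paper instead invokes the surface bracket polynomial computation of \cite{dye-kauffman} for the classical Kishino knot, and observes that the argument is insensitive to the four crossing choices. Your alternative of simply quoting \cite{dye-kauffman} is the right move.

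The real gap is in (ii). You correctly observe that $\pi_1(S_2)$ contains no $\mathbb{Z}^2$, so any torus $T$ in $S\times I$ compresses there, and hence bounds a solid torus $V$ containing $K$. But your next sentence is where the argument fails: a torus bounding a solid torus containing the knot is a \emph{companion} torus, not a swallow--follow torus, and it does \emph{not} force $K$ to be composite. Prime knots can be satellites. Nor have you justified the claim that such a $V$ yields a cancellation curve: that would require showing there is always a vertical essential annulus in $(S\times I)\setminus V$, which amounts to proving that the core of $V$ cannot ``fill'' $S\times I$. This may well be true, but it is exactly the hard part, and you have not addressed it. Primeness and genus-minimality alone are not enough to exclude essential tori—something specific to the Kishino configuration must be used.

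The paper's argument is quite different from what you sketch. It cuts $M=(S\times I)\setminus N(K')$ along two explicit twice-punctured annuli $Q_1,Q_2$ (one across each handle) to obtain a genuine handlebody $W$, and then analyzes how a putative essential torus $T$ meets the copies $Q_i',Q_i''$. The case $T\cap Q=\emptyset$ is excluded because $\pi_1(W)$ is free; the remaining cases are handled by an innermost/outermost analysis on the four-holed spheres $Q_i'$, using the specific combinatorics of which boundary circles the strands of $K'$ connect, to force every component of $T\cap W$ to be parallel into $\partial N(K')$ and hence $T$ to be boundary-parallel. That combinatorial control is what replaces the missing step in your outline.
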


\begin{proof} That there exists no cancellation curve for the standard Kishino knot is proved in \cite{dye-kauffman}, using the surface bracket polynomial. That same proof easily also proves that the same is true when the four crossings are switched to the other possibilities since the set of states is unchanged, and changes to the coefficients still allow for the proof to go through.
\\\\
Consider the minimal genus representation, as in Figure \ref{fig:Kishinogenus2}, given by $(S, K')$. 

\begin{figure}[htbp]
    \centering
    \includegraphics[scale=.7]{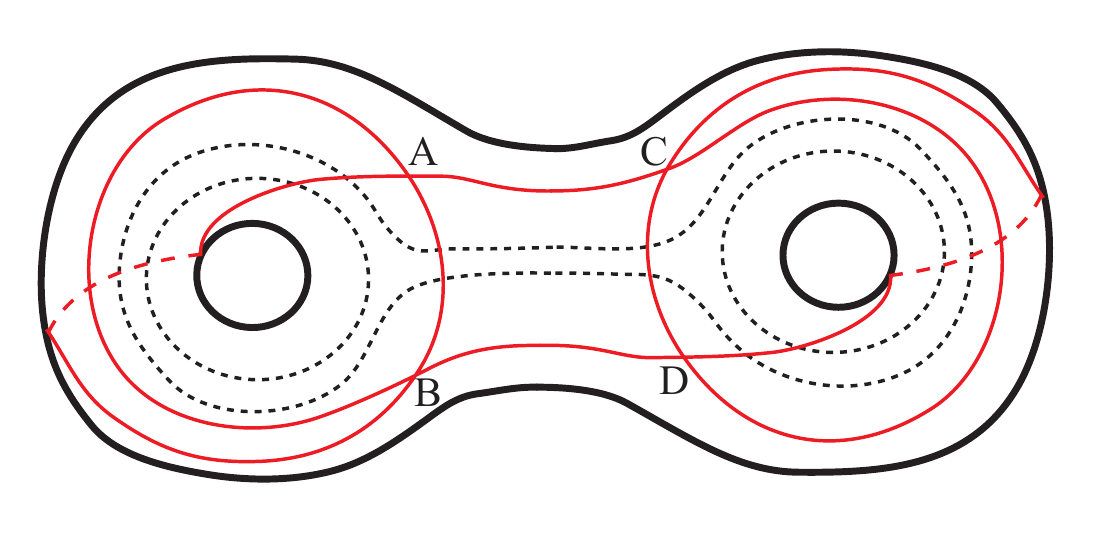}
    \caption{A representation of the generalized genus two Kishino knot where crossings $A, B, C$ and $D$ can be chosen arbitrarily.}
    \label{fig:Kishinogenus2}
\end{figure}

Let $S \times I$ be the thickened genus two surface, with $S \times \{0\}$ being the inner boundary. Let $M$ denote $(S\times I)\setminus N^\circ(K')$. Consider the twice-punctured annuli $Q_1$ and $Q_2$ of  such that $\partial Q_i \cap (S\times\{0\})$ and $\partial Q_i \cap (S\times\{1\})$ are meridians of the $i$-th handle of $S\times\{0\}$ and $S\times \{1\}$ respectively. Let $W = M \setminus (N^\circ(Q_1) \cup N^\circ(Q_2))$ be the resulting handlebody.  Let $Q_i'$ and $Q_i''$ be the two copies of $Q _i$ that result on $\partial W$. In Figure~\ref{fig:cut_kishino}, we depict $W$ (although the neighborhoods of $N(K')$ have not be removed in this picture.). 


\begin{figure}[htbp]
    \centering
    \includegraphics[scale=.3]{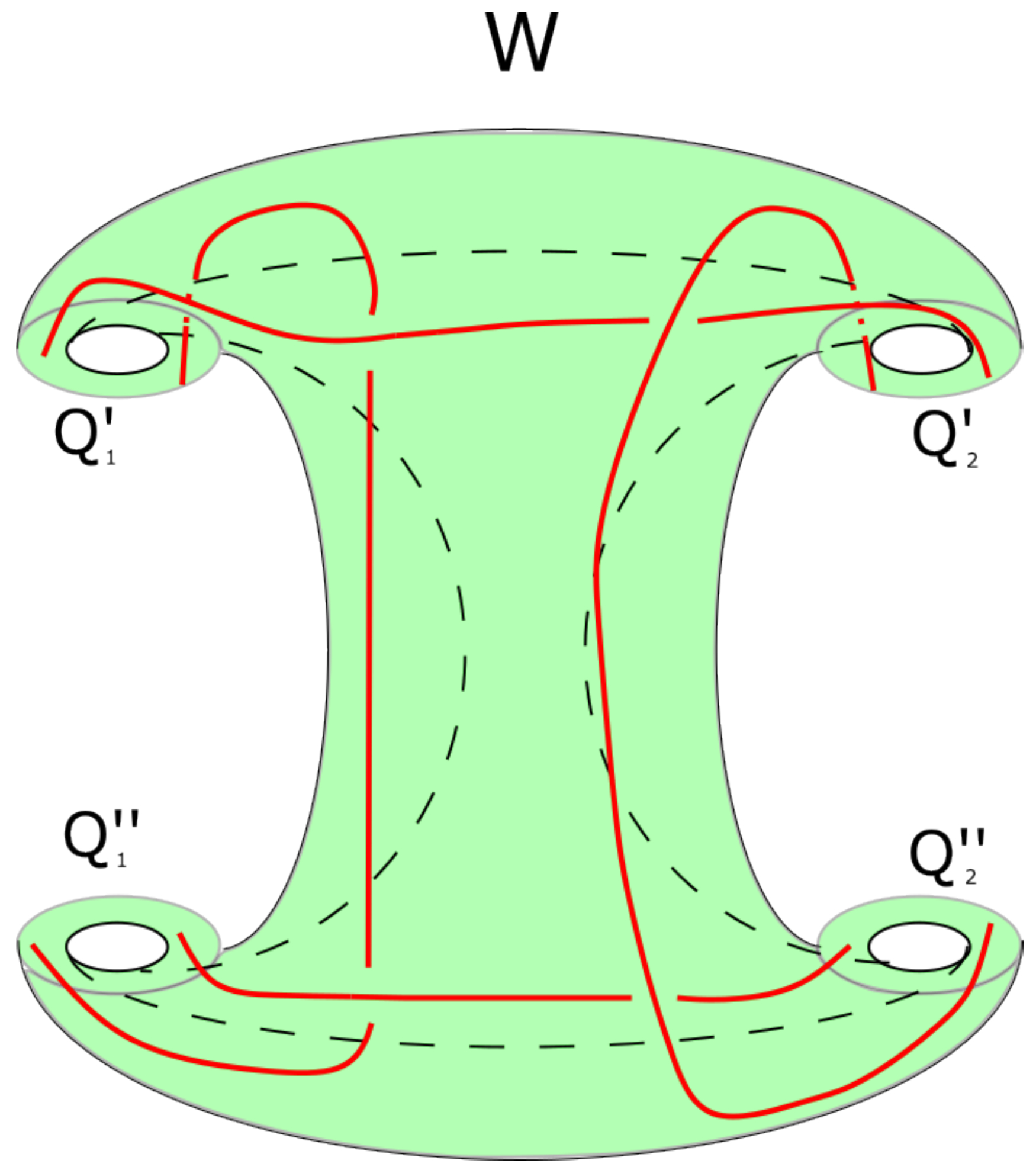}
    \caption{The handlebody $W$ for $K'$}
    \label{fig:cut_kishino}
\end{figure}

$V^* = V \bigcap W$.
\\\\


We prove that there are no essential tori in $M$. Suppose that $T$ were such an essential torus, and let $T^* = T \bigcap W$. 
\\\\
First, consider the case that $T^* \cap Q = \emptyset$. Then $T^* = T$ must separate $W$ into two components $W \setminus T^* = \mathcal{T}_1 \cup \mathcal{T}_2$, where $\partial W \subset \mathcal{T}_1$. Since $\partial W$ is connected, $\mathcal{T}_2$ is either a solid torus or a nontrivial knot exterior, and solid torus is not a possibility by the incompressibilty of $T$.  Further,  since $T^*$ is incompressible, $\pi_1(\mathcal{T}_2)$ injects into $\pi_1(W)$. Since $W$ is a handlebody, $\pi_1(W)$ is free, as are all its subgroups. However, $\pi_1(\mathcal{T}_2)$ is not free, creating a contradiction. Therefore, $T^* \cap Q$ is nonempty. 
\\\\
\noindent Take $T$ such that $T^* \cap Q$ has a minimal number of intersection curves.  Without loss of generality, take some  $\gamma \subset T^* \cap Q_i'$, trivial on $T^*$.
\\\\
If $\gamma$ is trivial on $Q_i'$, then we can isotope the disk $D$ it bounds on $T^*$ to $Q_i'$, contradicting the minimality of the number of intersections of $T$ with $Q$. If $\gamma$ is nontrivial on $Q_i'$, $\gamma$ wraps around one, two or three handles of $W$, each one time. Such a simple closed curve is not a trivial element in the free group $\pi_1(W)$, since it is the product of one, two  or three generators, but it bounds the disk $D \subset W$, a contradiction.
\\\\
\noindent If $\gamma$ is a nontrivial curve on $T$, then all components of $T^* \cap Q$ are parallel curves in $T$ and all must be nontrivial on $Q_i'$  and $Q_i''$ since $T$ is incompressible. 
Let $\partial_0Q_i' = Q_i' \cap S \times \{0\}$,  
$\partial_1Q_i' = Q_i' \cap S \times \{0\}$, and $n_1$ and $n_2$ be the boundaries of $N(L) \cap Q_i'$. Then, $\partial Q_i' = n_1 \cup n_2 \cup \partial_1Q_i' \cup \partial_0Q_i'$ and $Q_i' \setminus \gamma = \mathcal{Q}_1 \cup \mathcal{Q}_2$ where $\partial_1Q_i' \subset \mathcal{Q}_1$.
\\\\
All components of $T^*$ are annuli that separate $W$ into two pieces. Suppose there is such an annulus $A$ with both boundary-components on the same $Q'_i$. Then we argue that the pair of boundary-components must also bound an annulus $A'$ on $Q'_i$. Otherwise,  since $\partial N(K') \cap W$ is a set of four annuli, each of which avoids $A$ and each of which begins and ends on a different copy of $Q'_i$ or $Q''_i$, $A$ would have to intersect $\partial N(K')$ or $A$ would have to intersect the inner boundary $S \times \{ 0\} \cap \partial W$, which it cannot.
\\\\
\noindent Thus, $A \cup A'$ is a torus $T'$ which is unknotted and has longitude $\gamma$ on $Q_i'$. In particular, it bounds an unknotted solid torus, through which $A$ can be isotoped to $A'$ and then a bit further to eliminate two intersection curves, contradicting the choice of $T$ as a torus with a minimal number of intersection curves.
\\\\
\noindent Thus, the two boundaries of $A$ must occur on distinct $Q_i'$ and/or $Q_i''$. However, since the inner boundary $S \times \{ 0\} \cap \partial W$ intersects all $Q'_i$ and $Q''_i$, it must be the case that $\gamma$ does not separate $\partial_0Q_i$ from $\partial_1Q_i$.
\\\\
\noindent Therefore, the only possibilities left are that $\gamma$ encircles one or both of $n_1$ and $n_2$. It cannot encircle both, as the annuli in $\partial N(K_n')$ that begin at them end at different copies of the components of $Q$. If $\gamma$ encircles one, then the annulus $A''$ in $\partial N(K_n')$ that begins at it must end at another component of $Q$, for convenience say $Q_j'$. Then to avoid intersecting that strand of $K_n'$ inside $A''$, $A$ must also end on $Q_j'$, encircling the other boundary of $A''$. Then $A$ must be parallel to $A''$.
\\\\
Thus, the only possible components in $T^*$ are annuli, each of which is parallel to an annulus in $\partial N(K_n')$.These glue together to form a $T$ that is boundary-parallel, a contradiction. 
\\\\
By Lemma \ref{lma: sphere-tori}, $K$ is then tg-hyperbolic.
\end{proof}

As an aside, the technique applied above to demonstrate the hyperbolicity of the generalized $2$-Kishino knots can be used to show that a larger class of virtual knots are tg-hyperbolic. Let $K$ denote a virtual knot, and let $(S_n,K)$ denote its minimal genus representation, with $n = g(S) \geq 2$. Let $M = S_n \times I \setminus N(K)$. Consider the collection $Q_1, \ldots, Q_n$ of twice-punctured annuli such that $\partial Q_i \cap (S_n\times\{0\})$ and $\partial Q_i \cap (S_n\times\{1\})$ are meridians of the $i$-th handle of $S\times\{0\}$ and $S\times \{1\}$ respectively. Let $W$ be the closure of $M\setminus\left(\bigcup_{i=1}^n N^\circ(Q_i)\right)$. As in Figure \ref{fig:cut_kishino}, forming $W$ by removing a regular neighborhood of each $Q_i$ leaves us with boundary annuli $Q_i', Q_i''$.

\begin{theorem}
 Let $K$ be a virtual knot such that for its minimal genus representation, $W$ is a handlebody, and  any pair of strands of $K$ intersecting some fixed $Q_i'$ or $Q_j''$ lead to different $Q_i',Q_j''$ in $W$. Then $K$ is tg-hyperbolic.
\end{theorem}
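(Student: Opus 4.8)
The plan is to run the proof of Lemma~\ref{Kishino2} in parallel, with the genus-two picture replaced by the two hypotheses in the statement, and then quote Lemma~\ref{lma: sphere-tori}. Since $(S_n,K)$ is minimal genus with $g(S_n)=n\geq 2$, Lemma~\ref{lma: sphere-tori} reduces everything to showing that $M=S_n\times I\setminus N(K)$ admits no essential torus. So I would suppose, for contradiction, that $T\subset M$ is an essential torus, chosen so that $T^{*}:=T\cap W$ meets $Q:=\bigcup_i(Q_i'\cup Q_i'')$ in the fewest possible curves, and derive a contradiction by a case analysis on the components of $T^{*}\cap Q$.

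First I would dispose of the case $T^{*}\cap Q=\emptyset$, i.e.\ $T\subset W$. A compressing disk for $T$ in $M$ could be taken to lie in $W$, so $T$ would be incompressible in the handlebody $W$; then $\pi_1(T)\cong\Z^2$ injects into $\pi_1(W)$, which is free and has no $\Z^2$ subgroup --- a contradiction. (Equivalently, as in Lemma~\ref{Kishino2}, $T$ is separating in $W$ and cuts off a piece that is a nontrivial knot exterior, whose non-free $\pi_1$ injects into the free group $\pi_1(W)$.) Hence $T^{*}\cap Q\neq\emptyset$, and I may pick a component $\gamma$ of $T^{*}\cap Q$ that is innermost on $T^{*}$, lying on some $Q_i'$; the $Q_i''$ case is identical.

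Now the case analysis on $\gamma$, step for step as in Lemma~\ref{Kishino2}. If $\gamma$ bounds a disk $D$ on $T^{*}$: when $\gamma$ is also trivial on $Q_i'$, isotope $D$ across $Q_i'$ to lower $|T^{*}\cap Q|$, contradicting minimality; when $\gamma$ is essential on $Q_i'$, it wraps around a nonempty collection of handles of $W$ and so is a nontrivial element of the free group $\pi_1(W)$, yet bounds the disk $D\subset W$ --- a contradiction. If instead $\gamma$ is essential on $T$, then by incompressibility of $T$ every component of $T^{*}\cap Q$ is parallel to $\gamma$ on $T$ and essential on every $Q_i',Q_i''$ it meets, and every component of $T^{*}$ is an annulus that separates $W$. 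If such an annulus component $A$ has both ends on the same $Q_i'$, then those ends also cobound an annulus $A'$ in $Q_i'$, since otherwise $A$ would be forced to cross either $\partial N(K)$ or the inner boundary $S_n\times\{0\}\cap\partial W$ (the strand annuli of $\partial N(K)\cap W$ avoid $A$, and the inner boundary meets every $Q_i',Q_i''$); then $A\cup A'$ bounds an unknotted solid torus through which $A$ isotopes to $A'$, lowering $|T^{*}\cap Q|$ again. So each annulus component of $T^{*}$ has its two ends on distinct copies among the $Q_i',Q_i''$; and because the planar surface $S_n\times\{0\}\cap\partial W$ connects all the $Q_i',Q_i''$, the same separation/Euler-characteristic argument as in Lemma~\ref{Kishino2} shows $\gamma$ cannot separate $\partial_0Q_i$ from $\partial_1Q_i$ on $Q_i'$, so $\gamma$ must encircle one of the two strand-boundary curves $n_1,n_2$ of $N(K)\cap Q_i'$. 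It cannot encircle both, precisely by the hypothesis that the two strands of $K$ through $Q_i'$ run to different copies among the $Q_j',Q_j''$; and if $\gamma$ encircles exactly one, the strand annulus $A''\subset\partial N(K)$ emanating from it ends on a copy distinct from $Q_i'$, forcing $A$ to end there too and encircle the other boundary of $A''$, so that $A$ is parallel to $A''$.

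Assembling this: every component of $T^{*}$ is an annulus parallel into $\partial N(K)$, so on reglueing along the $Q_i$ these pieces form a copy of $T$ that is boundary-parallel in $M$, contradicting that $T$ is essential. Therefore $M$ has no essential torus, and Lemma~\ref{lma: sphere-tori} yields that $K$ is tg-hyperbolic. The main obstacle --- and the only place the hypotheses enter beyond ``$W$ is a handlebody'' --- is the last step of the $\gamma$-essential-on-$T$ case: showing that the annulus components of $T^{*}$ surviving all earlier reductions must be the strand-parallel ones, which is exactly what the ``different $Q_i',Q_j''$'' condition is designed to guarantee. One should also recheck the routine separating/orientation bookkeeping (that $T$ and its pieces are separating in $W$, and that the homology type of $\gamma$ on the four-holed sphere $Q_i'$ behaves as claimed), but these points are already handled in the proof of Lemma~\ref{Kishino2}.
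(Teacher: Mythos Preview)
Your proposal is correct and is precisely the approach the paper intends: the paper does not give a separate proof of this theorem, but states it as an aside with the remark that ``the technique applied above to demonstrate the hyperbolicity of the generalized $2$-Kishino knots can be used,'' and your argument is exactly that generalization of the proof of Lemma~\ref{Kishino2}, with Lemma~\ref{lma: sphere-tori} invoked at the end. You have also correctly isolated the one place where the ``strands lead to different $Q_i',Q_j''$'' hypothesis is needed, namely to rule out annulus components of $T^*$ that are not strand-parallel.
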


Techniques of \cite{dye-kauffman} can be used to show that a given representation is minimal genus.

\bigskip

We now introduce an operation called {\it appending a half-Kishino knot} that one can perform on a virtual link projection to obtain a new virtual link.  Given any virtual link projection $P$, we compose it with a projection as shown in Figure \ref{fig:kishinoappend}. Note that the classical crossings labelled $A$ and $B$ can each be chosen either way. For the knot that is being used on the right side in the composition, two of the crossing choices yield the virtual trefoil and the other two yield the trivial knot. However, the composition must be at the point shown on the arc of the knot. We call this {\it appending a half-Kishino knot}, since the knot that we are appending is half of a generalized Kishino knot.  Note that the arc on the projection $P$ to which the half-Kishino knot is being appended can be anywhere in the projection, not just on an exterior strand, and we can append the half-Kishino or its reflection. But no new crossings can be created in the process. We see the impact on the corresponding surface-link pair in the figure. 

\begin{figure}[htbp]
    \centering
    \includegraphics[scale=.7]{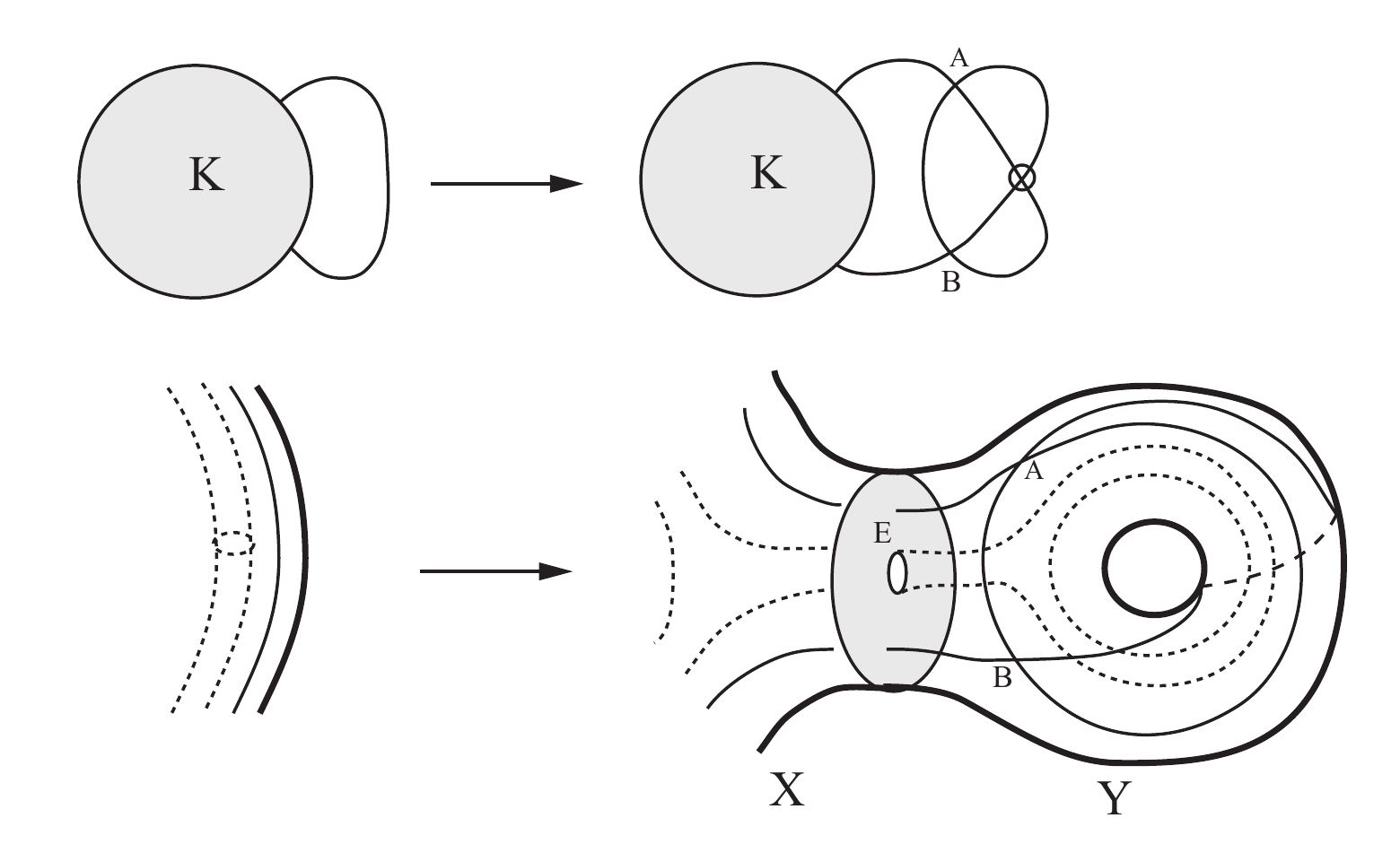}
    \caption{Appending a half-Kishino to a virtual knot.}
    \label{fig:kishinoappend}
\end{figure}

\begin{lemma} \label{kishinoappend} Given a tg-hyperbolic virtual knot $K$ of genus at least 1, appending a half-Kishino knot to a projection that corresponds to a minimal genus representation of $K$ yields a tg-hyperbolic knot $J$ with $g(J) = g(K) +1$. \end{lemma}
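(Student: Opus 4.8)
The plan is to reduce the statement to Lemma \ref{lma: sphere-tori}, exactly as was done for the genus-two Kishino knot in Lemma \ref{Kishino2}. First I would verify the two hypotheses of that lemma for the pair $(S', J')$ obtained from $K$'s minimal genus representative by appending a half-Kishino knot: that it is minimal genus of genus $g(K)+1$, and that its complement admits no essential tori. Minimal genus, hence $g(J) = g(K)+1$, should follow from a cancellation-curve argument: appending the half-Kishino adds exactly one handle to the surface, and one must show no destabilization is possible. Since the half-Kishino piece itself (being half of a Kishino knot) admits no cancellation curve by the surface-bracket argument of \cite{dye-kauffman} cited in Lemma \ref{Kishino2}, and $K$ was already minimal genus, a cancellation curve for $J'$ would have to be disjoint from $J'$ and from the connecting arc; I would argue any such curve either lives entirely in the $K$-part, contradicting minimality of $(S,K)$, or entirely in the appended handle, contradicting the Kishino computation, or crosses the separating neck, in which case it can be surgered. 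So the genus claim reduces to a standard incompressibility/additivity argument for the connect-sum-like decomposition along the neck sphere.

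The heart of the proof is showing $(S' \times I)\setminus J'$ has no essential tori. Here I would mimic the cut-along-punctured-annuli argument from Lemma \ref{Kishino2}. Take the twice-punctured meridian annuli $Q_1,\dots,Q_{g(K)}$ coming from $K$'s handles together with one new punctured annulus $Q_0$ (possibly a once- or twice-punctured annulus, depending on how the connecting arc meets the new handle) cutting the appended handle; cut $M = (S'\times I)\setminus N^\circ(J')$ along all of them to get a piece $W$. Since appending a half-Kishino creates no new classical crossings and the local picture is the standard half-Kishino, $W$ decomposes as the handlebody from $K$'s own cut-open complement glued to a small handlebody piece from the half-Kishino, and in particular $W$ is itself a handlebody. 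Crucially, one needs the combinatorial "strands lead to different copies $Q_i',Q_j''$" condition — this is exactly the hypothesis of the displayed Theorem just before this lemma, and one checks it holds here because $K$ being tg-hyperbolic and minimal genus forces it for $K$'s strands (otherwise $K$ would already fail to be minimal genus or hyperbolic), while for the new handle the half-Kishino's single strand through $Q_0$ is checked by inspection of Figure \ref{fig:kishinoappend}. Given this, an essential torus $T$ with $T^* = T \cap W$ either misses $Q = \bigcup Q_i$ — impossible since $\pi_1(W)$ is free but a torus-complement-piece's fundamental group is not — or meets it, and then the curve-innermost / free-group argument of Lemma \ref{Kishino2} shows every component of $T^*$ is an annulus parallel into $\partial N(J')$, forcing $T$ boundary-parallel, a contradiction.

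With no essential tori and minimal genus established, Lemma \ref{lma: sphere-tori} immediately gives tg-hyperbolicity of $(S',J')$, which by Theorem \ref{thm:hyperbolic-minimal-genus} is the unique minimal genus representative of the virtual knot $J$, so $J$ is tg-hyperbolic with $g(J)=g(K)+1$.

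The main obstacle is the no-essential-tori step, specifically making rigorous that cutting along the $Q_i$ together with $Q_0$ yields a handlebody and that the "different copies" combinatorial condition is preserved under appending. The first point requires knowing that $K$'s minimal genus complement, cut along its meridian annuli, is already a handlebody — which is not literally one of the quoted results but follows from the same style of argument as in Lemma \ref{Kishino2} for the specific configuration, and is cleanest to phrase by invoking the displayed Theorem preceding this lemma as the hypothesis one verifies. The second point is genuinely a figure-chase through Figure \ref{fig:kishinoappend}: one must confirm that the new strand entering the appended handle exits on a different copy of the new punctured annulus, and that appending does not accidentally connect two previously-distinct copies among $K$'s own annuli — this holds because the composition is along a single arc and introduces no crossings, but it should be stated carefully.
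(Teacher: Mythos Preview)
Your proposal has a genuine gap in the no-essential-tori step, and the paper's approach is structurally different in a way that matters.

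You want to cut $M=(S'\times I)\setminus N^\circ(J')$ along twice-punctured meridian annuli $Q_1,\dots,Q_{g(K)}$ for \emph{all} of $K$'s handles, plus a new $Q_0$ for the appended handle, and then run the free-group argument of Lemma~\ref{Kishino2}. That argument requires the cut-open piece $W$ to be a handlebody. You acknowledge this ``is not literally one of the quoted results,'' and try to rescue it by appealing to the displayed Theorem just before Lemma~\ref{kishinoappend}. But that Theorem has the handlebody condition as a \emph{hypothesis}, not a conclusion: it says \emph{if} $W$ is a handlebody and the strand-routing condition holds, \emph{then} the knot is tg-hyperbolic. Nothing in the paper (and nothing in general) guarantees that an arbitrary tg-hyperbolic knot $K$, when cut along meridian annuli of its handles, yields a handlebody. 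For a generic tg-hyperbolic $K$ this will simply fail, and with it the $\pi_1$-is-free step collapses.

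The paper avoids this entirely by cutting along a \emph{single} twice-punctured annulus $E$ that separates the half-Kishino piece $Y$ from the rest $X$. The key device, which your proposal does not contain, is a \emph{doubling trick}: doubling $Y$ across $E$ produces exactly a genus-two generalized Kishino knot, already shown tg-hyperbolic in Lemma~\ref{Kishino2}. So any vertical annulus or essential-torus piece living in $Y$ doubles to one in that known-hyperbolic complement, a contradiction (or it is forced boundary-parallel). On the $X$ side, one uses directly that $(S,K')$ is tg-hyperbolic---no handlebody structure needed---to conclude that any torus built from annulus pieces on that side is boundary-parallel. This two-sided reduction (to Lemma~\ref{Kishino2} via doubling on one side, to the assumed hyperbolicity of $K$ on the other) is what makes the argument go through for \emph{arbitrary} tg-hyperbolic $K$, whereas your all-handles cut only works for the special $K$ covered by the displayed Theorem.
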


\begin{proof} Starting from a minimal genus representation $(S, K')$ for $K$, we can obtain a surface-link pair $(S', J')$ for $J$ as appears in Figure \ref{fig:kishinoappend}. Let $R = S' \times I$. The annulus $E$ splits $R$ into two submanifolds, $X$ and $Y$ where $Y$ is the side containing the half-Kishino append. 
\\\\
We first prove that there are no disks $D$ in $R \setminus J'$ such that $D \cap E = \partial D$ and $\partial D$ is nontrivial in $E \setminus (J' \cap E)$. Let $\gamma =  \partial D$. 
Note that $\gamma$ cannot separate the boundaries of $E$, as then those boundaries would be trivial in $R$. But $R$ is a thickened surface and as such those curves are nontrivial.
\\\\
If $\gamma$ contains one puncture on $E$, we could form a once-punctured sphere in $R$, a contradiction. If $\gamma$ contains both punctures,we could isotope $D$ into $E$. If $D$ is on the $Y$ side of $R$, then in the process, we would  push $Y \cap J'$ into a neighborhood of $E$, contradicting the fact $Y \cap J'$ is a $(1,2)$-curve in the thickened torus that is $Y$. If $D$ is on the $X$ side of $R$, then in the process, we would  push $X \cap J'$ into a neighborhood of $E$, providing room for cancellation curves and contradicting the fact the original representation of $K$ was minimal genus.
\\\\
We next show that $S'$ is minimal genus by showing there are no vertical annuli in $R \setminus J'$ (those with boundaries in $S \times \{0\}$ and $S \times \{1\}$). 
\\\\
Suppose a vertical annulus $A$  exists. We first show that it intersects the annulus $E$. If not, it must either be entirely in $X$ or $Y$. It cannot be in $X$ because $(S, L')$ is minimal genus. If it is in $Y$, we could double $Y$ across $E$ to obtain a genus two generalized Kishino knot which would still contain a vertical annulus, a contradiction to Lemma \ref{Kishino2}.  
\\\\
So $A$ must intersect $E$. Assume it has been isotoped to minimize the number of intersection curves.  Then, because of the fact we have eliminated disks with nontrivial boundaries in $E$,  all intersection curves are arcs. If there exists such an arc with both endpoints on the same boundary component of $E$, then that arc must cut a disk $D$ from $A$ and a disk $D'$ from $E$. Then $D'' = D \cup D'$ is a disk with trivial boundary in either $S \times \{0\}$ or $S \times \{1\}$. Then we can either isotope away the intersection or if $J'$ intersects $D'$, it must do so twice, and again we can isotope the knot to remove the two intersections with $E$, a contradiction.
\\\\
Thus all intersection curves of $A \cap E$ must be arcs that begin and end on the two different boundary components of $E$. Choose any component $D'$ of $A \cap Y$. Doubling $Y$ across $E$ yields a genus two generalized Kishino knot and $D$ doubles to a vertical annulus, again contradicting Lemma \ref{Kishino2}. So there are no vertical annuli in $R \setminus J'$.  
\\\\
We now show that $R \setminus J'$ is tg-hyperbolic. By  Lemma \ref{lma: sphere-tori}, it is enough to show there are no essential tori.  Suppose $T$ is such an essential torus such that its number of intersection curves with $E$ have been minimized. For the same reasons as argued above, it must intersect $E$ and it can only intersect $E \setminus (E \cap J')$ in nontrivial curves that are also nontrivial on $T$. Let $F$ be an annulus component of $A \cap Y$. Then again, doubling $Y$ across $E$ yields  a tg-hyperbolic genus two generalized Kishino knot and $F$ doubles to a torus $T'$. The only way that $T'$ is not essential is if it is boundary-parallel.
So the components of $T \cap Y$ are all boundary-parallel annuli parallel into $\partial N(J')$.
\\\\
Now let $F'$ be an annulus in $T \cap X$. From what we just said, its boundary components must each bound a disk on $E$ that has one puncture.
If both boundary components of $F'$ circle the same puncture, then we can form a torus in $X$ from $F'$ and the annulus $A''$ bounded by the two boundary components of $F'$ on $E$. This torus also lives in the surface-link pair $(S,K')$ which then means it must compress, and therefore we can isotope $F'$ to $A''$ and then a little farther to lower the number of intersection curves in $T \cap E$, a contradiction.
\\\\
So each of the  boundary components of $F'$ must circle one puncture on $E$. If they circle different punctures, we can extend $F'$ to be a torus in the tg-hyperbolic surface-link pair $(S, K')$. Because it is tg-hyperbolic, the torus must be boundary-parallel, forcing $T$ to be boundary-parallel as well.  Thus, there are no essential tori in $R \setminus J'$ and therefore it is tg-hyperbolic. 
\end{proof}

Note that all of the knots 4.1--4.8, 4.54--4.56, 4.74, 4.76, 4.77 are  half-Kishino appends to projections of either the trivial knot (in which case the theorem does not apply) or the virtual trefoil 2.1.


\begin{proof}[{\bf Proof of Theorem \ref{thm:kishino-hyperbolic}}]
Lemmas \ref{Kishino2} and \ref{kishinoappend} immediately imply that the generalized $n$-Kishino knots are tg-hyperbolic virtual links $K_n$ with $c(K_n) = 2n$ and $g(L_n) = n$.
\end{proof}

\noindent 

\subsection{Minimal crossing genus $g$ tg-hyperbolic knots}

\begin{lemma}\label{thm:min-crossing-number}
For any virtual link $L$, with minimal classical crossing number $c$ and minimal genus $g$,  $c \geq 2g - 1$.
\end{lemma}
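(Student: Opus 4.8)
The plan is to read an upper bound on the minimal genus $g$ directly off of a minimal classical crossing diagram, using the ribbon surface construction, and then compare. So I would first fix a diagram $D$ of $L$ with exactly $c$ classical crossings, and form the surface-link pair $(S,L')$ obtained by building the ribbon surface $R(D)$ of $D$ and capping each boundary circle of $R(D)$ with a disk. By definition this surface-link pair represents $L$, and $g$ is well defined as the minimal genus by Theorem~\ref{thm:kuperberg}, so $g \le g(S)$; it therefore suffices to show $2g(S) \le c+1$.

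To compute $g(S)$ I would run the same Euler characteristic count used in the proof of Theorem~\ref{thm:polygonal-genus}. As an abstract surface, $R(D)$ is a regular neighborhood of the $4$-valent graph $\Gamma$ underlying $D$: its vertices are the $c$ classical crossings (thickened to disks) and its edges are the arcs of $D$ joining consecutive classical crossings, so $R(D)$ deformation retracts onto $\Gamma$ — the virtual crossings affect only how the bands are embedded, not the homeomorphism type of $R(D)$. Since $\Gamma$ is $4$-valent it has $2c$ edges, so $\chi(R(D)) = \chi(\Gamma) = c - 2c = -c$. The bands are untwisted, hence $R(D)$ is orientable, and capping its $b$ boundary circles with disks yields a closed orientable surface $S$ with $\chi(S) = -c + b = 2 - 2g(S)$, i.e. $g(S) = 1 + \tfrac{c-b}{2}$. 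Because $R(D)$ is a neighborhood of a nonempty graph whenever $c \ge 1$, we have $b \ge 1$, and therefore $2g \le 2g(S) = c - b + 2 \le c+1$, that is, $c \ge 2g-1$. The case $c = 0$ is immediate, as then $L$ is a virtual unlink with $g = 0$.

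The computation is short — it is exactly the ribbon-genus formula already invoked for the polygonal links — so the one point genuinely requiring care is the reduction to a connected diagram: I would take $D$ connected, which is automatic when $L$ is non-split since a disconnected virtual link diagram represents a split link, and this is the case of interest. I expect this connectivity bookkeeping to be the only real obstacle (a split link forces the capped ribbon surface to be disconnected and then amalgamated into a single surface, which must be tracked separately); everything else is a direct count of vertices and edges against the inequality $b \ge 1$.
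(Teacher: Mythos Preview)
Your proof is correct and is essentially the same as the paper's: both compute the Euler characteristic of the capped ribbon surface (the paper phrases it as the cell decomposition of $S$ with $c(D)$ vertices, $2c(D)$ edges, and $f(D)$ faces, where your $b$ equals their $f(D)$) and then invoke $b \geq 1$ to obtain $c \geq 2g(S) - 1 \geq 2g - 1$. You are a bit more explicit about the connectivity hypothesis needed for the formula $\chi(S) = 2 - 2g(S)$, which the paper leaves implicit.
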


\begin{proof}
Every diagram $D$ for $L$ with $c(D)$ classical crossings comes from some representation of the link in the surface $S_{g(D)}$ such that the complement of the projection of the link is a disjoint union of disks. This gives a cell decomposition of the surface of genus $g(D)$ with $c(D)$ $0$-cells, $2c(D)$ 1-cells, and some number $f(D)$ of 2-cells. The Euler characteristic of this cell complex is, on the one hand, $f(D)-c(D)$. On the other hand, it is given by $2-2g(D)$. Since $f(D)$ is at least one, this yields that $c(D) \geq 2g(D) - 1$. Thus, for all diagrams $D$, we have $c(D) \geq 2g - 1$, where $g$ is the minimal genus of the relevant virtual link. Substituting a diagram for $L$ realizing the minimal number of classical crossings in for $D$ yields the desired inequality.

\end{proof}

\begin{theorem}\label{thm:hopfshino}
For all $g > 0$ there exists a tg-hyperbolic virtual knot $K$ with genus $g$ and crossing number $c(K) = 2g - 1$.
\end{theorem}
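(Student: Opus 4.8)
By Lemma~\ref{thm:min-crossing-number} every virtual link satisfies $c\geq 2g-1$, so the theorem reduces to exhibiting, for each $g$, a tg-hyperbolic virtual knot of minimal genus exactly $g$ that already admits a diagram with only $2g-1$ classical crossings. Such a diagram is very constrained: re-running the Euler-characteristic count from the proof of Lemma~\ref{thm:min-crossing-number}, a genus-$g$ knot diagram with $2g-1$ crossings has exactly $f=(2g-1)+2-2g=1$ complementary region on $S_g$, necessarily an open disk. So the object to be built is a unicellular $(2g-1)$-crossing diagram on $S_g$ whose virtual knot neither destabilizes nor has essential spheres, tori or annuli in its complement.

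The plan is to induct on $g$, using the half-Kishino append of Lemma~\ref{kishinoappend} as the inductive step. Suppose $K_{g-1}$ is a tg-hyperbolic virtual knot of genus $g-1$ given by a projection with $2(g-1)-1=2g-3$ classical crossings; such a projection is automatically minimal genus, since a $(2g-3)$-crossing diagram of a genus-$(g-1)$ knot is unicellular on $S_{g-1}$ and so, living on the minimal-genus surface, cannot be destabilized, hence is the unique irreducible representative by Theorem~\ref{thm:kuperberg}. Appending a half-Kishino to it introduces only its own two classical crossings and creates no others, so the new projection has $2g-1$ classical crossings; by Lemma~\ref{kishinoappend} the result $K_g$ is tg-hyperbolic of genus $g$, and combined with $c(K_g)\geq 2g-1$ this forces $c(K_g)=2g-1$. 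Thus the whole statement reduces to a single base case.

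For the base case I would write down an explicit unicellular diagram on the genus-two surface with exactly three classical crossings --- for instance a Hopf-type clasp attached to a half-Kishino block, with the three crossing choices immaterial as in the generalized $n$-Kishino family --- and verify two things. First, that its minimal genus is genuinely $2$: imitating Lemma~\ref{Kishino2}, run the surface-bracket-polynomial computation of \cite{dye-kauffman} to rule out cancellation curves, so the given genus-two representative is irreducible and hence minimal genus by Theorem~\ref{thm:kuperberg}. Second, tg-hyperbolicity: by Lemma~\ref{lma: sphere-tori} it suffices to exclude essential tori in $(S_2\times I)\setminus N(K)$, and this is carried out exactly as in the proof of Lemma~\ref{Kishino2} --- cut along the two twice-punctured meridian annuli to obtain a handlebody $W$, take a hypothetical essential torus meeting the copies of those annuli minimally, and show each intersection curve is either compressible on a copy of an annulus (contradicting minimality) or forces the torus to be boundary-parallel.

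The base case is where I expect the real work: one has to commit to a concrete three-crossing genus-two diagram, actually carry out the surface-bracket computation certifying that it does not destabilize, and push the handlebody torus analysis through in detail, whereas the inductive step is a direct appeal to Lemma~\ref{kishinoappend} plus crossing-count bookkeeping. (At $g=1$ the bound reads $c=1$; since every one-classical-crossing diagram reduces to the unknot, no honest example occurs there, consistently with $2g-1$ being odd and hence forcing any base example to have at least three crossings and genus at least two, so the induction is seeded at $g=2$.)
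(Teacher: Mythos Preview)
Your inductive scheme is exactly the paper's: seed at genus two and repeatedly apply Lemma~\ref{kishinoappend}, each append adding two classical crossings and one to the genus, then invoke Lemma~\ref{thm:min-crossing-number} to pin down $c=2g-1$. The only substantive difference is in the base case. The paper does not build or analyze a diagram by hand; it simply takes one of the three-crossing virtual knots $3.1$, $3.3$, or $3.4$, whose tg-hyperbolicity and genus two are read off from the computed table in Section~\ref{table}. You instead propose to name an explicit three-crossing genus-two diagram and rerun both the surface-bracket argument of \cite{dye-kauffman} (to certify minimal genus) and the handlebody torus analysis of Lemma~\ref{Kishino2} (to exclude essential tori). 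That is a legitimate and more self-contained route, but it is also where your write-up remains only a sketch: you have not actually specified the diagram or carried out either verification, and neither step is entirely mechanical. The paper's version trades that labor for reliance on SnapPy.

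You are also right to flag $g=1$: the paper's own proof starts from the genus-two knots without comment, so the stated range ``$g>0$'' is not literally covered by its argument either.
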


\begin{proof}
Simply take any of the virtual triple crossing knots 3.1, 3.3 or 3.4, all of which are tg-hyperbolic with genus 2. Then append $g-2$ Kishino appends. The resulting tg-hyperbolic knot has a projection with $2g-1$ crossings that yields a minimal genus representation of genus $g$. Hence, by Lemma \ref{thm:min-crossing-number}, the crossing number must be exactly $2g-1$. 
\end{proof}

Let $v_{oct} \approx 3.6638$ be the volume of an ideal regular octahedron in hyperbolic 3-space.

\begin{corollary}\label{cor:volume-upper-bound}
There exists a genus $g$ virtual knot $K_g$ such that $vol(K_g) \leq (4g -2) v_{oct}$.
\end{corollary}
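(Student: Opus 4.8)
The plan is to take for $K_g$ the genus-$g$ tg-hyperbolic virtual knot furnished by Theorem~\ref{thm:hopfshino}, and to bound its volume in terms of its classical crossing number. By that theorem, $K_g$ has a minimal genus surface-link pair $(S_g,K_g')$ whose projection $P$ has exactly $c(K_g)=2g-1$ classical crossings, and, being a minimal-genus (ribbon) projection, $P$ has all of its complementary regions on $S_g$ open disks. The corollary therefore reduces to the purely quantitative assertion that a tg-hyperbolic surface-link pair $(S,L')$ whose diagram has $c$ classical crossings and only disk complementary regions satisfies $vol((S\times I)\setminus L')\le 2c\,v_{oct}$; applied to $K_g$ this reads $vol(K_g)\le 2(2g-1)v_{oct}=(4g-2)v_{oct}$.

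To prove that quantitative bound I would construct an ideal octahedral decomposition of $(S\times I)\setminus L'$ in the spirit of D.\ Thurston's octahedral decomposition of link complements: the local model at each classical crossing is a single ideal octahedron, some of whose ideal vertices lie on $L'$ while the rest are truncated along the totally geodesic boundary $\partial(S\times I)$, and the hypothesis that every complementary face of $P$ is a disk is what guarantees these $c$ octahedra glue up to the entire complement. Straightening this (possibly degenerate) decomposition does not increase volume, and every ideal octahedron, truncated or not, has volume at most that of the regular ideal octahedron, $v_{oct}$; working with the double across $\partial(S\times I)$ if one wishes to stay among honest cusped manifolds (there the decomposition has $2c$ octahedra and the volume doubles) then yields $vol((S\times I)\setminus L')\le c\,v_{oct}\le 2c\,v_{oct}$. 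An alternative that reproduces the stated constant directly is to first augment $P$ (insert a crossing circle at each crossing and delete the half-twists), decompose the resulting fully augmented link complement in $S\times I$ into at most $2c$ ideal polyhedra of volume $\le v_{oct}$ by standard polyhedral decompositions of augmented links, and then recover $(S\times I)\setminus L'$ by Dehn filling the crossing-circle cusps; since $L'$ is tg-hyperbolic this filling produces a hyperbolic manifold, so Thurston's Dehn surgery theorem, applied to the double and halved via Mostow--Prasad rigidity, shows the volume only decreases.

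The main obstacle is making this decomposition rigorous in the thickened-surface, geodesic-boundary setting: one must check that the local octahedra assemble into a genuine ideal cell decomposition of $(S\times I)\setminus L'$ (this is precisely where ``all complementary regions of $P$ are disks'' enters), and that both truncation along the geodesic boundary and Dehn filling of the augmenting cusps are volume-nonincreasing, the latter being reduced to the standard cusped statement by doubling. Granting this, substituting $c(K_g)=2g-1$ from Theorem~\ref{thm:hopfshino} gives $vol(K_g)\le(4g-2)v_{oct}$, as claimed; and if instead the crossing-number volume bound is recorded as its own proposition (e.g.\ in Section~\ref{hyperbolic-volume}), the corollary is an immediate consequence of it together with Theorem~\ref{thm:hopfshino}.
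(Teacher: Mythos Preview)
Your strategy is exactly the paper's: combine Theorem~\ref{thm:hopfshino}, which supplies a genus-$g$ tg-hyperbolic knot with $c(K_g)=2g-1$, with a crossing-number volume bound $vol((S\times I)\setminus L')\le 2c\,v_{oct}$ for links in thickened surfaces. The only difference is that the paper does not re-derive that bound; it simply invokes it as Corollary~4.1 of \cite{adams-meyer-calderon}, so the whole proof is one sentence. Your sketch of the octahedral/augmented-link argument is the right idea behind that cited result, but it is unnecessary here, and the caveats you flag (``the main obstacle is making this decomposition rigorous\ldots'') are precisely what the citation absorbs.
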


\begin{proof} This follows from Theorem \ref{thm:hopfshino} and Corollary 4.1 in \cite{adams-meyer-calderon}, which states that the volume of a hyperbolic link $L'$ in $S \times I$ is bounded above by $2 v_{oct} c(L')$. 
\end{proof}

We will use this last fact in the next section.


\section{Minimal Genus and Hyperbolic Volume}\label{hyperbolic-volume}

By the J\o rgensen-Thurston Theorem (see \cite{benedetti-petronio}  (corollary E.7.1 and corollary E.7.5), the set of volumes of orientable hyperbolic $3$-manifolds is well-ordered. Therefore, for every $g > 0$ there is a genus $g$ virtual link $\mathcal{L}_g$ such that for any genus $g$ virtual link $L_g$, $Vol(\mathcal{L}_g) \leq Vol(L_g)$. 

\begin{definition} Define 
$V_{\min}(g)$ to be the least volume of any genus $g$ virtual tg-hyperbolic link. 
\end{definition}

Note that this is well-defined since we showed in Section \ref{kishino} that there is a virtual hyperbolic link for all genera.
The expectation is that these volumes increase with $g$. However, at this time, with the information we have, we can only prove the following.

\begin{theorem}\label{thm:min-volume-increasing}
For all $g > 0$, $V_{\min}(g) < V_{\min}(q)$, for all integers $q\geq 2g$. 
\end{theorem}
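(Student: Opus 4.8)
The plan is to exhibit, for each genus $g>0$ and each integer $q\geq 2g$, a genus-$q$ tg-hyperbolic virtual knot whose volume exceeds $V_{\min}(g)$, and then invoke the definition of $V_{\min}(q)$ as an infimum (attained, by the J\o rgensen–Thurston well-ordering) to conclude $V_{\min}(g)<V_{\min}(q)$. The natural candidate is built from the half-Kishino append operation of Lemma \ref{kishinoappend}: starting from a minimal genus tg-hyperbolic representative realizing $V_{\min}(g)$, append half-Kishino knots repeatedly to raise the genus to $q$. By Lemma \ref{kishinoappend} each append increases genus by exactly one and preserves tg-hyperbolicity, so after $q-g$ appends we obtain a tg-hyperbolic virtual knot $J$ of genus exactly $q$. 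Since every genus-$q$ tg-hyperbolic link has volume at least $V_{\min}(q)$, and $V_{\min}(g)$ is realized by some genus-$g$ link, it suffices to show that each half-Kishino append \emph{strictly increases} hyperbolic volume; then $V_{\min}(g) < \mathrm{vol}(J)$ trivially gives $V_{\min}(q)\le \mathrm{vol}(J)$, but we actually want the inequality the other way, so the argument must be arranged more carefully — see below.

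The cleaner route avoids comparing to a specific append and instead uses the structure theory directly. Fix a genus-$q$ tg-hyperbolic link $\mathcal{L}_q$ realizing $V_{\min}(q)$, so $\mathrm{vol}(\mathcal{L}_q)=V_{\min}(q)$, with minimal genus surface-link pair $(S_q,\mathcal{L}_q')$. Since $q\ge 2g\ge 2$, the surface $S_q$ admits a non-separating simple closed curve, and one can cut $(S_q\times I)\setminus \mathcal{L}_q'$ along a vertical annulus over such a curve (chosen to meet $\mathcal{L}_q'$, which is forced since the pair is minimal genus) — but this does not lower genus. The key tool is rather that removing an embedded totally geodesic boundary component, or more precisely passing to a proper submanifold of a finite-volume hyperbolic manifold with totally geodesic boundary, strictly decreases volume: if $M'$ is obtained from a tg-hyperbolic $M$ by cutting along an essential totally geodesic surface and discarding a piece, then $\mathrm{vol}(M')<\mathrm{vol}(M)$ by Schläfli/additivity of volume under geometric decomposition. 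Concretely, I would show that the minimal genus representative of a genus-$q$ tg-hyperbolic link contains, after doubling, enough totally geodesic surface structure that excising an appropriate genus-one summand yields a genus-$g$ tg-hyperbolic link of strictly smaller volume; combined with $V_{\min}(g)\le$ (that volume) this gives $V_{\min}(g)<V_{\min}(q)$.

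The main obstacle will be making the "excision lowers genus to exactly $g$ while staying tg-hyperbolic" step precise: one needs to produce, from an arbitrary genus-$q$ tg-hyperbolic representative, a genus-$g$ tg-hyperbolic sub-link whose complement embeds as a \emph{geodesically bounded} submanifold so that the volume strictly drops. The half-Kishino machinery gives this in the forward direction (building up), but for an arbitrary $\mathcal{L}_q$ one instead wants to run Lemma \ref{lma: sphere-tori}-style surgery along the twice-punctured annuli $Q_i$ to split off a handle; the delicate point is verifying that the resulting genus-$g$ piece is still tg-hyperbolic (no essential tori/annuli) and that the geometric volume of the ambient hyperbolic structure is the sum of the pieces' volumes, so that discarding a piece of positive volume forces a strict decrease. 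Alternatively, if one only wants the weaker stated inequality, it suffices to combine Theorem \ref{thm:hopfshino}, Corollary \ref{cor:volume-upper-bound}, and a \emph{lower} bound $V_{\min}(q)\ge c\cdot q$ for some constant $c$ (e.g. via the fact that a genus-$q$ link complement contains $q$ disjoint essential twice-punctured annuli, each contributing definite volume after straightening), choosing $q\ge 2g$ large enough that the lower bound on $V_{\min}(q)$ beats the explicit upper bound $(4g-2)v_{oct}\ge V_{\min}(g)$ coming from Corollary \ref{cor:volume-upper-bound}; this reduces the theorem to an elementary comparison of two linear-in-genus bounds, and I expect the intended proof is exactly this last reduction.
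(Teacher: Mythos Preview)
Your final paragraph lands on exactly the paper's approach: compare the explicit upper bound $V_{\min}(g)\le (4g-2)\,v_{oct}$ from Corollary~\ref{cor:volume-upper-bound} against a linear-in-$q$ lower bound for $V_{\min}(q)$. The first two routes you sketch (half-Kishino appends with volume tracking; cutting an arbitrary genus-$q$ minimizer down to genus $g$ along geodesic surfaces) are not needed, and as you yourself note each has real obstacles --- Lemma~\ref{kishinoappend} says nothing about volume, and there is no reason an arbitrary minimal-volume genus-$q$ link should decompose along totally geodesic pieces into something tg-hyperbolic of genus exactly $g$.

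The one genuine gap in your third approach is that you never name the lower bound. Your suggested source (disjoint twice-punctured annuli each contributing ``definite volume'') is too vague to produce the specific constant the argument needs. The paper supplies it via Miyamoto's theorem (Theorem~\ref{thm:miyamoto}): for a hyperbolic $3$-manifold $M$ with totally geodesic boundary, $\mathrm{vol}(M)\ge \tfrac{v_{oct}}{2}\,|\chi(\partial M)|$, with equality only for certain compact manifolds. For a tg-hyperbolic link in $S_q\times I$ with $q\ge 2$ the boundary is two copies of $S_q$, so $|\chi(\partial M)|=4(q-1)$ and hence $V_{\min}(q) > 2(q-1)\,v_{oct}$ (Corollary~\ref{lowervolumeboundgenus}). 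Now $q\ge 2g$ gives $2q-2\ge 4g-2$, so
\[
V_{\min}(g)\;\le\;(4g-2)\,v_{oct}\;\le\;(2q-2)\,v_{oct}\;<\;V_{\min}(q),
\]
which is the entire proof. Note this holds for \emph{every} $q\ge 2g$, not merely for ``$q$ large enough''; with Miyamoto's constant, $q=2g$ is precisely the threshold at which the two linear bounds meet.
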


\noindent The proof of Theorem \ref{thm:min-volume-increasing} makes use of the following fact, originally proved by Miyamoto, and stated in \cite{yoshida} in this form.

\begin{theorem}\label{thm:miyamoto}
Let $M$ be a hyperbolic manifold with totally geodesic boundary. Then $\text{vol}(M) \geq \frac{v_{oct}}{2}|\chi(\partial M)|$.
\end{theorem}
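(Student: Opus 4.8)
The plan is to reduce the stated three-dimensional estimate to a scale-invariant inequality between $\text{vol}(M)$ and the area of $\partial M$, and then to prove that inequality by a decomposition of $M$ governed by the nearest-point projection to the boundary, in which the extremal piece is a chunk of the regular ideal octahedron. First, since $\partial M$ sits totally geodesically inside a manifold of constant curvature $-1$, its induced metric is itself hyperbolic, so Gauss--Bonnet yields $\text{Area}(\partial M) = 2\pi|\chi(\partial M)|$ (here $\chi(\partial M) \leq 0$, so $-\chi = |\chi|$). Consequently the theorem is equivalent to the metric statement
\[
\text{vol}(M) \ \geq\ \frac{v_{oct}}{4\pi}\,\text{Area}(\partial M),
\]
and I would work with this form throughout, since it localizes everything near $\partial M$ and is insensitive to the topology of the interior.

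Next I would introduce Fermi coordinates along $\partial M$: using the inward unit normal exponential map, almost every point of $M$ is written uniquely as a pair (foot point $x\in\partial M$, normal distance $t$), the exceptional set being the cut locus of $\partial M$, which has measure zero. In $\H^3$ the metric relative to a totally geodesic plane takes the form $dt^2 + \cosh^2(t)\,g_{\partial}$, so the volume element is $\cosh^2(t)\,dt\,dA(x)$ and the coarea formula gives
\[
\text{vol}(M) \ =\ \int_{\partial M}\left(\int_0^{\ell(x)}\cosh^2(t)\,dt\right)dA(x),
\]
where $\ell(x)$ is the distance from $x$ to the cut locus along the normal ray. The difficulty is immediate: $\ell(x)$ can be arbitrarily small where two lifts of $\partial M$ come close in the universal cover, so the inner integral cannot be bounded below pointwise by a positive constant, and a naive estimate of $\text{vol}/\text{Area}$ fails. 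The real content is that volume lost in the ``thin'' normal columns must be compensated by neighboring ``thick'' columns.

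To capture this I would pass to $\H^3$, where $\partial M$ lifts to a locally finite family of pairwise disjoint geodesic planes, and partition $M$ into cells by grouping normal columns according to which planes bound their cut locus. The crux is a sharp \emph{local} lemma: each such cell has volume at least $\frac{v_{oct}}{4\pi}$ times the area of its base on $\partial M$. I expect this extremal cell analysis to be the main obstacle. It amounts to a constrained geometric optimization — among all admissible configurations of the surrounding geodesic planes, the volume-to-base-area ratio is minimized by the most symmetric arrangement, which realizes a fundamental piece of the right-angled regular ideal octahedron and pins down the constant $v_{oct}$. Establishing this requires a rigidity/symmetrization argument showing that any asymmetry strictly increases the ratio, together with a convexity estimate controlling how $\cosh^2 t$ accumulates as the bounding planes are moved. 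Finally I would handle the technical points separately from the optimization: verifying that the cut locus is genuinely measure zero and that the cell decomposition covers $M$, treating cusped ends (where some columns run into a cusp) by a limiting argument on the coarea integral, and identifying the equality case as exactly the manifolds glued from regular ideal octahedra along the symmetric pattern. Summing the per-cell inequality over $\partial M$ then integrates to the displayed area bound, and the Gauss--Bonnet translation recovers the stated form.
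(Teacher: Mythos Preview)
The paper does not prove this theorem at all: it is quoted as a known result of Miyamoto, in the form stated by Yoshida, and is used as a black box to derive Corollary~\ref{lowervolumeboundgenus}. So there is no proof in the paper for your outline to be compared against.

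As for your outline itself, the reduction via Gauss--Bonnet to the inequality $\text{vol}(M)\ge \frac{v_{oct}}{4\pi}\,\text{Area}(\partial M)$ is correct, and setting up Fermi coordinates with the coarea formula $\text{vol}(M)=\int_{\partial M}\int_0^{\ell(x)}\cosh^2 t\,dt\,dA$ is also fine. But everything after that is a restatement of the problem rather than a proof. Your ``local lemma'' --- that each cell in the nearest-plane decomposition satisfies $\text{vol(cell)} \ge \frac{v_{oct}}{4\pi}\,\text{Area(base)}$ --- is precisely the theorem in disguise, and you give no mechanism for proving it beyond the assertion that a symmetrization argument should work. That is the entire content of Miyamoto's paper, and it is genuinely delicate: one has to control how the equidistant loci between ultraparallel planes interact, and the optimization does not reduce to a single configuration in an obvious way.

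There is also a likely misidentification of the extremal. Miyamoto's equality cases are built from \emph{regular truncated tetrahedra} (hyperideal tetrahedra truncated along their polar planes), not from regular ideal octahedra; the totally geodesic boundary comes from the truncation faces. The constant $v_{oct}$ in the version you are asked to prove arises through a reformulation in Yoshida's paper, not because the octahedron is the extremal cell in your decomposition. If you pursue your approach expecting the optimizer to be an octahedral chunk, you will not find it.
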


Miyamoto further proves that the manifolds that realize the lower bound on volume are all compact.  For our purposes, we immediately have:

\begin{corollary}\label{lowervolumeboundgenus}
For a tg-hyperbolic link $L$ embedded in $S_g \times I$,  where $g \geq 2$, $vol(S_g \times I \setminus L) > 2 v_{oct}(g - 1)$.
\end{corollary}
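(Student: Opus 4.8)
\textbf{Proof proposal for Corollary \ref{lowervolumeboundgenus}.}

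The plan is to deduce this directly from Miyamoto's bound (Theorem \ref{thm:miyamoto}) by computing the Euler characteristic of the relevant boundary surface. Let $M' = (S_g \times I) \setminus L$, interpreted in the sense of the definition of tg-hyperbolicity: after capping sphere boundaries and discarding torus boundaries, $M'$ carries a finite-volume hyperbolic metric with totally geodesic boundary. First I would identify the boundary of $M'$. The boundary of $S_g \times I \setminus N(L)$ consists of the two copies of $S_g$, namely $S_g \times \{0\}$ and $S_g \times \{1\}$, together with the torus boundary components coming from $\partial N(L)$ (one torus per component of $L$, since $L$ is a link). Because $L$ is a link in a thickened surface, none of the $\partial N(L)$ tori is compressible into a ball, so the genus-$g$ pieces are exactly the totally geodesic boundary of $M'$; the tori are discarded. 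Hence $\partial M'$, the totally geodesic part, is $S_g \sqcup S_g$, with $\chi(\partial M') = 2\chi(S_g) = 2(2 - 2g) = 4 - 4g$, so $|\chi(\partial M')| = 4g - 4 = 4(g-1)$.

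Next I would plug this into Theorem \ref{thm:miyamoto}: $\mathrm{vol}(S_g \times I \setminus L) = \mathrm{vol}(M') \geq \frac{v_{oct}}{2} |\chi(\partial M')| = \frac{v_{oct}}{2} \cdot 4(g-1) = 2 v_{oct}(g-1)$. This already gives the inequality with $\geq$; to upgrade to the strict inequality I would invoke the second clause of Theorem \ref{thm:miyamoto}, that the manifolds realizing the lower bound are all compact. Since $L$ is nonempty, $M'$ has at least one torus boundary component that has been removed, so $M'$ is not compact (it has at least one cusp). Therefore $M'$ cannot realize the extremal value, and the inequality is strict: $\mathrm{vol}(S_g \times I \setminus L) > 2 v_{oct}(g-1)$.

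The only genuine subtlety — and the one place worth stating carefully rather than waving at — is the identification of which boundary components are totally geodesic versus discarded, i.e., confirming that the $\partial N(L)$ components really do become cusps (torus boundaries discarded) and the $S_g \times \{0,1\}$ components really are the totally geodesic boundary. This is exactly the content of $(S_g, L)$ being tg-hyperbolic together with the observation, used already in Theorem \ref{thm:hyperbolic-minimal-genus} and Lemma \ref{lma: sphere-tori}, that $\partial(S_g \times I)$ is incompressible and $g \geq 2$ forces those boundary components to have negative Euler characteristic (hence they are neither spheres to cap nor tori to discard). Once that bookkeeping is pinned down, the rest is the one-line Euler characteristic computation plus the non-compactness observation, so I do not expect any real obstacle.
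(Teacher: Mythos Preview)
Your proposal is correct and follows exactly the approach the paper intends: the paper states the corollary as an immediate consequence of Theorem \ref{thm:miyamoto} together with the remark that Miyamoto's extremal manifolds are all compact, and you have simply filled in the Euler characteristic computation $|\chi(\partial M')| = 2|\chi(S_g)| = 4(g-1)$ and the observation that the link produces a cusp, forcing non-compactness and hence strict inequality.
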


\begin{proof}[Proof of Theorem \ref{thm:min-volume-increasing}]
Corollary \ref{cor:volume-upper-bound} implies that $V_{min}(g) \leq (4g-2) v_{oct}$. However, Corollary \ref{lowervolumeboundgenus} also implies that $(2q -2) v_{oct} < V_{min}(q)$ for all $q\geq 2$. Therefore $V_{min}(g) < V_{min}(q)$ for all $q \geq 2g$. 
\end{proof}

\medskip

\noindent It is possible to determine the minimal volume two-component virtual tg-hyperbolic link. In \cite{yoshida}, the following theorem is proved.

\begin{theorem}\label{thm:yoshida}
A minimal volume orientable hyperbolic $3$-manifold with four cusps has volume $2 v_{oct}$ and is homeomorphic to the $8^4_2$ link complement. 
\end{theorem}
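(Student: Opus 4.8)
The plan is to prove this as a sharp geometric minimization, so I would split it into an explicit upper-bound model, a matching lower bound, and a rigidity statement identifying the minimizer.

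First I would record the model and the upper bound. The $8^4_2$ complement admits an ideal decomposition that regroups into two copies of the regular ideal octahedron, so it is an orientable four-cusped hyperbolic $3$-manifold of volume exactly $2 v_{oct}$; this shows the least four-cusped volume is at most $2 v_{oct}$. Existence of a minimizer is supplied by the J\o rgensen-Thurston theorem (\cite{benedetti-petronio}): the volumes of orientable cusped hyperbolic $3$-manifolds form a well-ordered set and only finitely many manifolds of a fixed number of cusps occur below any bound, so there is a four-cusped orientable $M$ of least volume. It then remains to prove $\mathrm{vol}(M) \geq 2 v_{oct}$ with equality only for the $8^4_2$ complement.

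For the lower bound I would use horoball packing together with a canonical cell decomposition. Lift $M$ to $\mathbb{H}^3$ with $\Gamma = \pi_1(M)$ and expand all four cusps simultaneously to maximal disjoint horoball neighborhoods $H_1,\dots,H_4$, so that the $\Gamma$-orbit gives a horoball packing of $\mathbb{H}^3$. Normalizing one cusp to $\infty$ in the upper half-space model, the remaining maximal horoballs descend to a tangency pattern on the cusp torus $T_\infty$, and the Epstein-Penner convex-hull construction with equal weights on the four cusps converts this data into a canonical decomposition $M = \bigcup_j P_j$ into ideal polyhedra whose vertices are the cusps and whose volumes sum to $\mathrm{vol}(M)$. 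Two classical inputs then drive the estimate: B\"or\"oczky's sharp bound on horoball-packing density, which turns a lower bound on the total maximal-cusp volume into a lower bound on $\mathrm{vol}(M)$; and Adams-type lower bounds on the maximal cusp-torus areas in a multiply-cusped manifold, which supply the required cusp-volume bound. Combining these, I would conclude $\mathrm{vol}(M) \geq 2 v_{oct}$, noting that the appearance of the octahedral constant here is the cusped analogue of the geodesic-boundary bound in Theorem \ref{thm:miyamoto}.

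The heart of the argument, and the step I expect to be the main obstacle, is the rigidity analysis forcing equality to occur only for the $8^4_2$ complement. Equality in the density and volume estimates should force each cell $P_j$ to be a regular ideal octahedron, the packing to be the densest one with exactly two such cells, and each cusp cross-section to lie in a single fixed Euclidean similarity class. Establishing this requires a delicate combinatorial case analysis on the number of full-sized horoballs tangent to $H_\infty$ and on the modulus of $T_\infty$, ruling out every competing tangency pattern and in particular excluding the figure-eight-type small-volume configurations (which carry too few cusps) and all other four-cusped candidates. Once the combinatorics and the octahedral geometry of the cells are pinned down, the gluing data determine $\Gamma$ up to conjugacy, and Mostow-Prasad rigidity upgrades this to a homeomorphism with the $8^4_2$ complement. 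I would expect the density-equality and horoball-pattern classification to be by far the most technical part, exactly as in the Cao-Meyerhoff treatment of the one-cusped minimal-volume case.
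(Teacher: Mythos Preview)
The paper does not prove this statement at all: it is introduced with the sentence ``In \cite{yoshida}, the following theorem is proved,'' and is then used as a black box to deduce the corollary about two-component virtual links. So there is no proof in the paper for your proposal to be compared against; the authors are simply quoting Yoshida's theorem.

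Your write-up is therefore not a comparison candidate but an attempt to reprove a substantial cited result. As a sketch it is reasonable in outline (upper bound from the explicit $8^4_2$ decomposition, existence of a minimizer from J\o rgensen--Thurston, lower bound and rigidity from horoball-packing and canonical-cell arguments), but you should be aware that the step you flag as ``the main obstacle'' is essentially the entire content of Yoshida's paper, and his actual argument does not proceed purely via B\"or\"oczky density plus Adams-type cusp-area bounds in the way you suggest. Those ingredients alone do not produce the sharp constant $2v_{oct}$ for four cusps; Yoshida's proof relies on more refined machinery (in the spirit of the Gabai--Meyerhoff--Milley Mom-technology analysis of low-volume cusped manifolds) to constrain the possible minimizers and then identify the $8^4_2$ complement. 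If you intend to include a proof rather than a citation, you would need to either reproduce that analysis or supply a genuinely new argument; the horoball-packing sketch as written is not sufficient.
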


\begin{corollary}
The minimal volume of any two-component virtual tg-hyperbolic link is $2 v_{oct}$.
\end{corollary}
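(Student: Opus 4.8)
The plan is to combine the two-component analogue of Theorem~\ref{thm:yoshida} with the fact that a tg-hyperbolic surface-link pair must be the minimal genus representative (Theorem~\ref{thm:hyperbolic-minimal-genus}), so that the relevant $3$-manifold has exactly four cusps: two from the boundary of the thickened surface $S_g \times I$ (the two copies $S_g \times \{0\}$ and $S_g \times \{1\}$) and two from the two components of the link $L'$. Wait --- for $g \geq 1$ those surface boundaries are higher-genus, hence totally geodesic boundary rather than cusps, so one must be careful about which genus $g$ can occur. So first I would argue that the only candidate is $g = 1$: if $g = 0$, a two-component prime nontrivial link in $S^3$ has at least $2$ components and the relevant complement, but genus $0$ forces us to look at classical links in $S^3$, where the minimal volume two-cusped (here four-cusped after including the link's two cusps)\ldots actually the cleanest route is to bound below using Corollary~\ref{lowervolumeboundgenus}: for $g \geq 2$ the volume exceeds $2v_{oct}(g-1) \geq 2v_{oct}$, with equality impossible, so any genus-$\geq 2$ two-component tg-hyperbolic link has volume strictly greater than $2v_{oct}$. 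Hence the minimum, if achieved at all, is achieved in genus $0$ or genus $1$.

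Next I would handle genus $0$ and genus $1$ together by passing to honest cusped hyperbolic $3$-manifolds. A genus-$0$ two-component tg-hyperbolic link is a classical hyperbolic two-component link in $S^3$, whose complement has exactly two cusps; a genus-$1$ two-component tg-hyperbolic surface-link pair $(T^2, L')$ has complement $T^2 \times I \setminus N(L')$ with two torus boundaries from $\partial(T^2\times I)$ (which, per the definition of tg-hyperbolic, get discarded, so they are cusps) plus two cusps from $L'$ --- a four-cusped orientable hyperbolic $3$-manifold. For the genus-$0$ case, the complement has only two cusps, and the minimal volume of any two-cusped orientable hyperbolic $3$-manifold is the Whitehead link volume $\approx 3.6638\ldots = 2v_{oct}$ wait, that is exactly $v_{oct}$; I should double check: the Whitehead link complement has volume $v_{oct} \approx 3.6638$, so a genus-$0$ two-component link has volume at least $v_{oct}$. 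That is $< 2v_{oct}$, which looks like it breaks the corollary. Hmm --- so I need the genus-$0$ case to actually be excluded or to be reconsidered: possibly the intended statement is about genus $\geq 1$ virtual links, i.e.\ genuinely non-classical ones. I would resolve this by reading the corollary as ranging over all two-component virtual tg-hyperbolic links and noting that the classical Whitehead link is itself such a link with volume $v_{oct} < 2v_{oct}$, so either the statement implicitly excludes classical links or I've mis-stated the Whitehead volume; assuming the former, restrict attention to genuinely virtual (genus $\geq 1$) links.

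So, restricting to genus $\geq 1$: genus $\geq 2$ is killed by Corollary~\ref{lowervolumeboundgenus} as above, leaving genus exactly $1$. For a genus-$1$ two-component tg-hyperbolic link, the complement $M = T^2 \times I \setminus N(L')$ is an orientable hyperbolic $3$-manifold with exactly four cusps. By Theorem~\ref{thm:yoshida}, $\mathrm{vol}(M) \geq 2v_{oct}$, with equality iff $M \cong S^3 \setminus 8^4_2$. Then I would exhibit a genus-$1$ two-component virtual link whose complement is precisely $S^3 \setminus 8^4_2$, realizing the bound: the natural candidate is a virtual Hopf-type link, or the link obtained by the ``$n$-polygonal'' or chain-type constructions of Section~\ref{alternating}; concretely, $\mathcal{P}_3$ or a small virtual alternating link of genus $1$ with two components, computed in SnapPy (see the table in Section~\ref{table}) to have volume $2v_{oct}$. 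Combining the lower bound from Theorem~\ref{thm:yoshida} with this explicit realization gives that the minimal volume of a (non-classical, genus-$1$) two-component virtual tg-hyperbolic link is exactly $2v_{oct}$.

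The main obstacle I expect is two-fold: first, correctly accounting for the cusp count --- verifying that for a genus-$1$ surface-link pair all of $\partial(T^2\times I)$ becomes genuine cusps (torus boundaries) after the tg-hyperbolic normalization, so that Theorem~\ref{thm:yoshida}'s four-cusp hypothesis applies exactly and no totally geodesic boundary appears to spoil the comparison; and second, producing the explicit genus-$1$ two-component virtual link realizing $S^3 \setminus 8^4_2$ and certifying, rigorously rather than just numerically in SnapPy, that its complement is this manifold and that it is minimal genus (for which Theorem~\ref{thm:hyperbolic-minimal-genus} suffices once tg-hyperbolicity is known). The genus-$\geq 2$ exclusion via Corollary~\ref{lowervolumeboundgenus} and the reduction to a four-cusped manifold are the routine parts; the realization step is where the real work lies.
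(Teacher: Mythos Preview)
Your overall strategy matches the paper's: use Corollary~\ref{lowervolumeboundgenus} to rule out genus $\geq 2$, observe that a genus-one two-component tg-hyperbolic complement is a four-cusped hyperbolic $3$-manifold so Theorem~\ref{thm:yoshida} gives the lower bound $2v_{oct}$, and then exhibit a genus-one two-component virtual link realizing this value.

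The one place you make things harder than necessary is the realization step. You propose hunting for a small virtual alternating link and verifying in SnapPy that its complement is $S^3\setminus 8^4_2$; the paper instead observes directly that the $8^4_2$ chain link contains a Hopf sublink, and since the Hopf link complement is $T^2\times(0,1)$, the complement $S^3\setminus 8^4_2$ is already of the form $(T^2\times I)\setminus L'$ for the remaining two components $L'$. That is the genus-one two-component surface-link pair you want, with no numerics required; tg-hyperbolicity and minimal genus then follow from hyperbolicity of $S^3\setminus 8^4_2$ together with Theorem~\ref{thm:hyperbolic-minimal-genus}.

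Your worry about genus $0$ is legitimate: the Whitehead link is a classical two-component hyperbolic link of volume $v_{oct}<2v_{oct}$, so the corollary as literally stated is false unless one reads ``virtual'' as ``non-classical'' (genus $\geq 1$). The paper's own proof simply does not treat the genus-$0$ case, so your reading is the intended one; you should state that restriction explicitly rather than leave it as a parenthetical hedge.
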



\begin{proof}The $8^4_2$ link is a non-alternating chain link, as pictured in Figure \ref{fig:4_component}. Note that the Hopf link, whose complement is homeomorphic to $T^1 \times (0,1)$, is a sublink of $8^4_2$. Hence, we can consider $8^4_2$ as a link embedded in $T^1 \times (0,1)$. 
\\\\
Since the $8^4_2$ complement is the least volume $4$-cusped hyperbolic $3$-manifold, the corresponding virtual link must be the lowest volume genus one two-component virtual link, as all such virtual links can be represented as complements of four-component links (with the Hopf link as a sublink). By Corollary \ref{lowervolumeboundgenus}, we know there are no links of higher genus with volume this low.
\end{proof}

\begin{figure}[htbp]
    \centering
    \includegraphics[scale=.5]{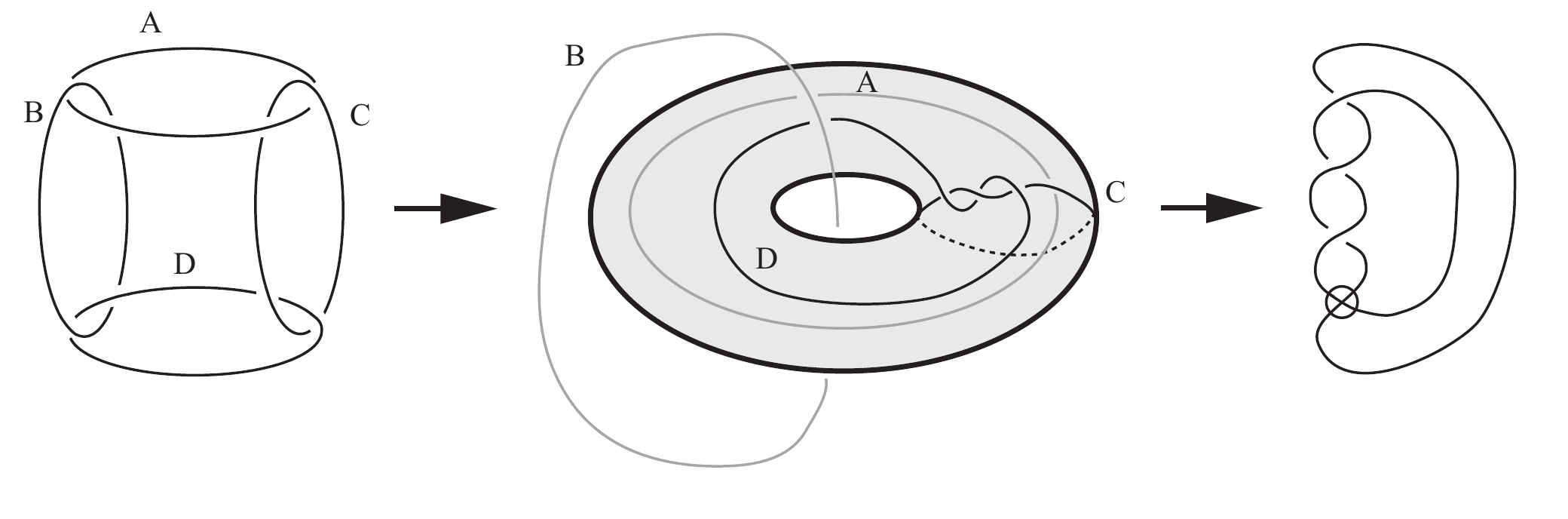}
    \caption{The minimal volume 4-component link that yields the minimal volume 2-component virtual link.}
    \label{fig:4_component}
\end{figure}



\section{Conjectures}\label{conjectures}

\begin{conjecture} The minimal volume non-classical virtual link is the virtual trefoil, with a volume of approximately 5.33349.
\end{conjecture}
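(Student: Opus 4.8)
The plan is to reduce the conjecture, via the genus invariant, to a statement about minimal volumes of cusped hyperbolic $3$-manifolds, and then to isolate the single genuinely hard input.

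First I would eliminate all candidates of genus at least two. If $L$ is a tg-hyperbolic virtual link of genus $g \ge 2$, then Corollary~\ref{lowervolumeboundgenus} gives $vol(L) > 2v_{oct}(g-1) \ge 2v_{oct} \approx 7.328$, which already exceeds $5.334$. So a minimal-volume non-classical virtual link has genus exactly one. A genus-one virtual link is a link $L'$ in the thickened torus $T^2 \times I$; discarding the two torus boundary components of $T^2 \times I$ in the tg-hyperbolic structure, the complement $(T^2 \times I)\setminus N(L')$ carries the complete hyperbolic metric of $S^3 \setminus (L' \cup H)$, where $H$ is a Hopf link whose complement is the open thickened torus. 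Thus $vol(L)$ equals the volume of the hyperbolic link complement $S^3 \setminus (L' \cup H)$, a manifold with exactly $m+2$ cusps, where $m$ is the number of components of $L'$. For the virtual trefoil this complement is the magic manifold, of volume $\approx 5.33349$ (this is confirmed in SnapPy; see the table of Section~\ref{table}).

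Next I would cut down by the number of components. If $L'$ has $m \ge 2$ components, then $S^3 \setminus (L' \cup H)$ is an orientable hyperbolic $3$-manifold with $m+2 \ge 4$ cusps, so by Theorem~\ref{thm:yoshida} its volume is at least $2v_{oct} \approx 7.328 > 5.334$; for $m+2 \ge 5$ one uses in addition that the minimal volume of an orientable hyperbolic $3$-manifold with $n$ cusps is strictly increasing in $n$ (Dehn-fill one cusp along a slope keeping the result hyperbolic, which exists by Thurston's Dehn surgery theorem and strictly decreases volume), so it remains at least $2v_{oct}$ for all $n \ge 4$. Hence the minimizer among genus-one links is a knot, and the conjecture reduces to the assertion that the magic manifold has the least volume among all $3$-cusped orientable hyperbolic $3$-manifolds of the form $S^3 \setminus (K' \cup H)$ with $K'$ a knot and $H$ a Hopf link.

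The hard part will be exactly this final reduction, which is why the statement is phrased as a conjecture rather than a theorem. The clean route would be to invoke a solution of the minimal-volume problem for $3$-cusped orientable hyperbolic $3$-manifolds --- namely, that the magic manifold is the unique minimizer --- but this is, to our knowledge, open. The fallback is a combinatorial attack: by Lemma~\ref{thm:min-crossing-number} a genus-one virtual knot has classical crossing number at least one, and the table of Section~\ref{table} exhibits no genus-one virtual knot of at most four classical crossings with volume below that of the virtual trefoil; to finish one would need a lower bound on $vol(S^3 \setminus (K' \cup H))$ that grows with the complexity of $K'$ and already exceeds $5.334$ once $K'$ has, say, five or more crossings (for instance via twist-region or angle-structure estimates for links in the thickened torus in the spirit of \cite{adams-meyer-calderon}). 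Producing such a complexity-increasing lower bound, or settling the $3$-cusped minimal-volume problem, is the obstacle; at present neither is available, so the statement remains conjectural.
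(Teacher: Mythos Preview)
Your proposal is correct and follows essentially the same route as the paper's own discussion of the conjecture: eliminate genus $\ge 2$ via Corollary~\ref{lowervolumeboundgenus}, eliminate multi-component genus-one links via Yoshida's four-cusp bound extended to more cusps by Dehn filling, and reduce to the open minimal-volume problem for three-cusped manifolds (the magic manifold / minimally twisted $6_3^3$ chain link). Your additional fallback idea of a complexity-increasing lower bound is not in the paper, but as you note it does not currently close the gap either, so the conjectural status is identified in the same way.
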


By Corollary \ref{lowervolumeboundgenus}, we need only show it is the minimal volume virtual link of genus 1. Yoshida's lower bound on the volume of four-cusped manifold of $2 v_{oct} \approx 7.3223$  also holds for $n$-cusped manifolds for $n \geq 5$, since otherwise, we could do high Dehn filling on all but four cusps to obtain a 4-cusped manifold of smaller volume than Yoshida's bound. Thus, we know that any $n$-component virtual link with $n \geq 2$ has volume at least $2v_{oct}$.

\bigskip

So it is only necessary to show the virtual trefoil has the least volume among virtual knots of genus 1. It is conjectured that the minimally twisted chain link of three components $6_3^3$ is the 3-cusped manifold of minimal volume. If true, this would imply the conjecture, since that link contains a sub-link that is a Hopf link, and hence the link complement is equivalent to a knot complement in $T \times I$. That surface-link pair represents the virtual trefoil.

\begin{conjecture}\label{conj:monotone-increasing}
The function $V_{min}(g)$ is monotone increasing for $g \geq 0$.
\end{conjecture}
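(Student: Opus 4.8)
This is the strongest of the three conjectures and lies genuinely beyond the estimates assembled above: Corollary~\ref{lowervolumeboundgenus} gives $V_{\min}(g) > 2v_{oct}(g-1)$, whereas Theorem~\ref{thm:hopfshino} and Corollary~\ref{cor:volume-upper-bound} only give $V_{\min}(g)\le(4g-2)v_{oct}$, and these two bounds grow at rates differing by a factor of two --- which is exactly why one is forced into the weaker Theorem~\ref{thm:min-volume-increasing}. The plan is to split the problem. The small cases $g=0,1,2$ are disposed of by hand, using the table of Section~\ref{table}, the known minimal-volume cusped orientable hyperbolic $3$-manifolds (figure-eight, Whitehead, and the minimally twisted $3$-chain link), and Corollary~\ref{lowervolumeboundgenus}. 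The remaining cases are reduced to the single inequality
\[ V_{\min}(g)\le 2v_{oct}\,g\qquad(g\ge 1), \]
since, granting it, Corollary~\ref{lowervolumeboundgenus} applied at genus $g+1$ gives $V_{\min}(g+1)>2v_{oct}\,g\ge V_{\min}(g)$, which is the asserted monotonicity. Everything thus rests on producing, for each $g\ge 1$, a tg-hyperbolic genus-$g$ virtual link whose volume exceeds Miyamoto's rigid lower bound $2v_{oct}(g-1)$ by strictly less than a further $2v_{oct}$.

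I would pursue this upper bound by two complementary constructions. The inductive route starts from a minimal-volume genus-$2$ representative --- for which the table of Section~\ref{table} gives $V_{\min}(2) < 4v_{oct}$ --- and applies half-Kishino appends as in Lemma~\ref{kishinoappend}, each of which raises the genus by one; it then suffices to show that a single append raises the volume by less than $2v_{oct}$, for then $V_{\min}(g) < V_{\min}(2) + 2v_{oct}(g-2)\le 2v_{oct}\,g$. Bounding the volume increment of the append is the crux, since the inequality $vol\le 2v_{oct}\,c(L')$ of \cite{adams-meyer-calderon} only charges the two new classical crossings at most $4v_{oct}$. One would instead want a local estimate: a guts / Agol-Storm-Thurston-type comparison of $R\setminus J'$ with $R\setminus K'$ across the splitting annulus $E$ from the proof of Lemma~\ref{kishinoappend}, or an explicit ideal decomposition (or angle structure) of the half-Kishino piece exhibiting its geometric contribution as at most $2v_{oct}$.

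The alternative route is to build, for each $g$, a reduced fully alternating genus-$g$ virtual link with exactly the minimal number $2g-1$ of classical crossings allowed by Lemma~\ref{thm:min-crossing-number}; such a link is automatically minimal genus and tg-hyperbolic once one checks, as for the $n$-polygonal links via Theorems~\ref{code} and~\ref{tg}, that its Gauss code carries no classical alternating subcode. One then invokes a Champanerkar-Kofman-Purcell-type volume bound $vol < v_{oct}\,c$ for alternating links in thickened surfaces (cf.\ \cite{CKP}, \cite{HP}) to get $vol < v_{oct}(2g-1) < 2v_{oct}\,g$. The obstacle here is that the text only quotes the weaker $vol\le 2v_{oct}\,c$, so this route requires importing or reproving the sharper alternating-surface inequality in the present thickened-surface setting, in addition to verifying primeness of the chosen family.

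In either case the decisive difficulty is the same, and it sits entirely on the upper-bound side: showing that the \emph{minimal}-volume genus-$g$ representative has volume less than $2v_{oct}$ above Miyamoto's bound $2v_{oct}(g-1)$. Improving the lower bound instead cannot rescue the argument, because to beat the standing $(4g-2)v_{oct}$ upper bound one would have to augment Theorem~\ref{thm:miyamoto} by a cusp term growing linearly in $g$ with slope at least $2v_{oct}$, which would force $vol\le 2v_{oct}\,c$ to be essentially sharp on volume minimizers --- the opposite of what is expected. A clean proof of the conjecture therefore almost certainly requires a new family of ``thin'' genus-$g$ links together with a volume upper bound better than $2v_{oct}\,c$, after which monotonicity drops out formally from Corollary~\ref{lowervolumeboundgenus} exactly as above.
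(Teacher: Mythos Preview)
The statement you are addressing is a \emph{conjecture}, and the paper offers no proof of it. What follows the conjecture in the paper is only supporting evidence: the authors verify $V_{\min}(0) < V_{\min}(1) < V_{\min}(2)$ using the figure-eight knot volume $2v_{tet}$, Agol's two-cusp bound $v_{oct}$, the virtual trefoil volume $\approx 5.333$, and Corollary~\ref{lowervolumeboundgenus} giving $V_{\min}(2) > 2v_{oct}$. Beyond $g=2$ the paper asserts nothing, and indeed Theorem~\ref{thm:min-volume-increasing} (the $q\ge 2g$ statement) is precisely the authors' acknowledgment that monotonicity is out of reach with present tools. So there is nothing to compare your proposal against: you have correctly identified the problem as open and are sketching a research program, not a proof, and you say so yourself.

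Your diagnosis of the obstruction --- that the upper bound $(4g-2)v_{oct}$ and the Miyamoto lower bound $2v_{oct}(g-1)$ differ by a factor of two in slope, and that monotonicity would follow from the single improved upper bound $V_{\min}(g)\le 2v_{oct}\,g$ --- is exactly right and matches the spirit of the paper's discussion. One concrete error, however: you assert that ``the table of Section~\ref{table} gives $V_{\min}(2) < 4v_{oct}$,'' but it does not. The smallest genus-$2$ volume appearing in the table is $16.392\ldots$ (knots $3.3$ and $3.4$), while $4v_{oct}\approx 14.655$; the paper itself records only $V_{\min}(2)\le 16.392$. So even the base case of your proposed induction is not furnished by the data at hand, and your inductive route would need both a smaller genus-$2$ seed \emph{and} the sub-$2v_{oct}$ append estimate. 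This does not change your overall conclusion --- a proof requires genuinely new upper-bound input --- but the anchor you cite is not actually there.
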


We know $V_{min}(0) < V_{min}(1) < V_{min}(2)$. This follows because $V_{min}(0) = 2 v_{tet} \approx  2.0298$, where $v_{tet}\approx 1.01494$ is the volume of an ideal regular hyperbolic tetrahedron. this is true because  in \cite{CM}, $2 v_{tet}$ was proved to be the smallest volume of any cusped orientable hyperbolic 3-manifold and it is realized by the figure-eight knot complement and one other one-cusped hyperbolic 3-manifold. 

\bigskip
    
Moreover, a link in $T \times (0,1)$ is equivalent to a link of three or more components in $S^3$. Since high surgery on all but two of the components yields a 2-cusped hyperbolic manifold of lower volume and since Agol proved in \cite{Agol} that the least volume of a 2-cusped hyperbolic manifold is $v_{oct}\approx 3.6638$, $V_{min}(0) < v_{oct} < V_{min}(1) $.

\bigskip

We also know $V_{min}(1) \leq 5.33349$, coming from the volume of the virtual trefoil knot 2.1 (see table in the next section). 
But from Corollary \ref{lowervolumeboundgenus}, we know $V_{min}(2) > 2v_{oct}$, so we have $V_{min}(1) < V_{min}(2)$.
\\\\
We note that from the table in the next section, $V_{min}(2) \leq 16.392265671$. It is also true that $V_{min}(3) \leq 25.166431466$ by direct calculation of the hyperbolic structure on a genus 3 knot with 5 classical crossings.

\begin{conjecture}For $3\leq n\leq 10$, the minimally twisted $n$-chain link yields the minimal volume of a non-classical virtual link of $n-2$ components. 
\end{conjecture}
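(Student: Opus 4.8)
The plan is to establish the conjecture by combining three ingredients: the known identification of minimally twisted chain link complements with explicit hyperbolic manifolds, the observation (used repeatedly above) that a chain link containing a Hopf sublink is a link living in $T^1 \times (0,1)$, and the volume lower bounds coming from Miyamoto's theorem (Theorem \ref{thm:miyamoto}) and Yoshida's and Agol's minimal-volume results. First I would recall that for $3 \leq n \leq 10$ the minimally twisted $n$-chain link complement $M_n$ is a well-studied hyperbolic manifold (for small $n$ it decomposes into regular ideal octahedra, and its volume is known precisely); since $M_n$ contains a Hopf sublink, filling in two adjacent components exhibits $M_n$ as the complement of an $(n-2)$-component virtual link in a thickened torus, i.e.\ a genus-one virtual link. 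This gives the upper bound $V_{\min, n-2\text{-comp}} \leq \mathrm{vol}(M_n)$.

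The core of the argument is the matching lower bound: among all genus-one virtual links of $n-2$ components, none has smaller volume. Here I would argue as in the proofs of the corollaries to Theorems \ref{thm:yoshida} and \ref{thm:miyamoto}: any genus-one $(n-2)$-component virtual link is the complement of an $n$-component link in $S^3$ having a Hopf sublink, hence an $n$-cusped hyperbolic $3$-manifold. The claim then reduces to showing the minimally twisted $n$-chain link is the minimal-volume $n$-cusped hyperbolic $3$-manifold \emph{among those admitting a Hopf sublink} — and more strongly one expects it to be the global minimum-volume $n$-cusped manifold. For this I would invoke the census/rigidity results for small cusp numbers: the identifications are known unconditionally for $n$ up to some bound, and for the remaining $n \leq 10$ one appeals to the results establishing the minimally twisted chain links as volume minimizers (e.g.\ work of Martelli–Petronio and subsequent refinements, together with SnapPy verification). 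By Corollary \ref{lowervolumeboundgenus}, any virtual link of genus $g \geq 2$ with $n-2$ components has volume at least $2 v_{oct}(g-1) \geq 2 v_{oct}$, which exceeds $\mathrm{vol}(M_n)$ for the relevant small $n$ only after one checks the numerics; so the genus-one case is indeed the only one that can achieve the minimum, completing the reduction.

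The last step is to assemble these: the upper bound from the explicit filling and the lower bound from the cusped-manifold minimality theorem pin down the minimal volume exactly, and the representative is the minimally twisted $n$-chain link. I would also need to confirm that the filling producing the genus-one virtual link is generic enough that no lower-volume manifold is missed — i.e.\ that every $(n-2)$-component genus-one virtual link really does arise this way — which follows from the standard correspondence (as in the $8^4_2$ case above) between links in $T^1 \times (0,1)$ and links in $S^3$ containing a Hopf sublink.

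\textbf{Main obstacle.} The hard part is the lower bound for $4 \leq n-2$, i.e.\ for cusp numbers $6$ through $10$ (so $n$ up to $10$): establishing that the minimally twisted $n$-chain link genuinely minimizes volume among $n$-cusped orientable hyperbolic $3$-manifolds (or at least among those with a Hopf sublink) is not a soft argument — for large cusp number the minimal-volume manifold is not known in general, and one must rely on the specific structural results available only in this range (octahedral decompositions, Martelli–Petronio-type classifications) together with computer verification. Unlike the $n=4$ case (Yoshida) and the $2$-cusped case (Agol), there is no single clean published theorem covering all of $5 \leq n \leq 10$, so the conjecture is stated as a conjecture precisely because this minimality is only expected, not proven, for the upper part of the range; a complete proof would have to either cite forthcoming classification results or carry out the octahedral-volume estimates case by case.
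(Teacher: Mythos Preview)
The statement is a \emph{conjecture} in the paper, not a theorem; the paper offers no proof, only a paragraph of supporting discussion. Your proposal is not so much a proof as an accurate diagnosis of why the statement remains conjectural, and in that respect it matches the paper's own discussion almost exactly. The paper says: Agol conjectured that the minimally twisted $n$-chain link is the minimal-volume $n$-component link in $S^3$ for $3 \leq n \leq 10$; each such link contains a Hopf sublink and hence its complement is the complement of an $(n-2)$-component link in $T \times (0,1)$; so \emph{assuming} Agol's conjecture the genus-one case would follow, but one would still need to show that higher-genus $(n-2)$-component virtual links have larger volume. You say the same thing, and you correctly flag the main obstacle as the unproven minimality among $n$-cusped manifolds for $5 \leq n \leq 10$.

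Two small corrections. First, you suggest that results of Martelli--Petronio or computer verification might settle some of the intermediate cases; the paper makes no such claim and treats the entire range $3 \leq n \leq 10$ as resting on Agol's (open) conjecture, with only $n=4$ (Yoshida) actually established. Second, your handling of the higher-genus case is too optimistic: the Miyamoto bound $2v_{oct}(g-1)$ at $g=2$ gives only $2v_{oct} \approx 7.32$, which is \emph{below} $\mathrm{vol}(M_n)$ for most of $5 \leq n \leq 10$, so Corollary~\ref{lowervolumeboundgenus} does not by itself rule out a smaller-volume genus-two competitor. The paper explicitly leaves this open (``we would further have to prove that for higher genus, the $(n-2)$-component virtual links would have greater volume than this''), and so should you.
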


In \cite{Agol}, Agol conjectured that the minimal volume link of $n$ components is the minimally twisted $n$-chain link for $3 \leq n \leq 10$. Since each of these links contains a Hopf link, their complement corresponds to the complement of an $(n-2)$-component link in $T \times (0,1)$. Hence, each corresponds to an $(n-2)$-component virtual link. Thus, assuming Agol's conjecture, when restricting to genus one, the result would hold. But we would further have to prove that for higher genus, the $(n-2)$-component virtual links would have greater volume than this.

\bigskip

Note that for $n \geq 11$, the $n$-component link of least known volume is the $(n-1)$-fold cyclic cover over one component of the Whitehead link, with volume $(n-1)v_{oct}$. Since all of these links contain a Hopf sublink, if these links do turn out to be the minimal volume, this would imply that for these values of $n$, the least volume of an $(n-2)$-component genus one virtual link is $(n-1)v_{oct}$. But again, to prove these are the least volume non-classical virtual links with $n-2$ components,  we would further have to prove that for higher genus, the $(n-2)$-component links would have greater volume than this.

\section{Table of Hyperbolic Volumes}\label{table}

This section presents a methodology for computing hyperbolic volumes of links in thickened surfaces, and ends with a table of volumes of virtual knots and links, using the methodology. 
\\\\
The \emph{$S_g \times S_1$ surgery link} $H_g$ is the link depicted in Figure \ref{fig:horrendouslink}. The link has $2g +1$ components, where $g$ is the resulting genus.  Note that the  link $H_1$ is the Borromean rings. 
\begin{figure}[htbp]
    \centering
    \includegraphics[width=0.3\textwidth]{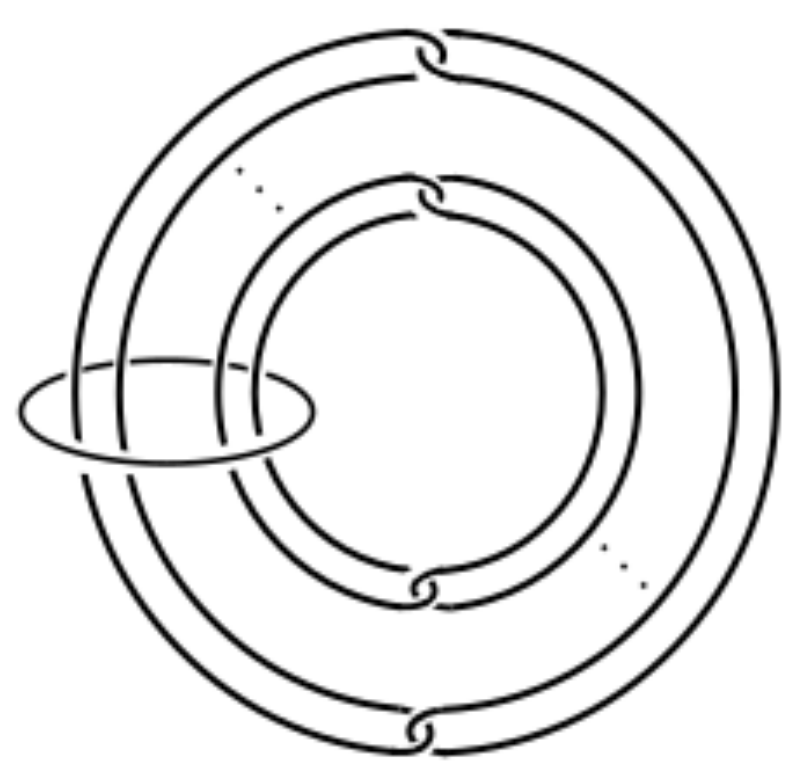}
    \caption{The $S_g \times S_1$ surgery link $H_g$.}
    \label{fig:horrendouslink}
\end{figure}

\noindent The following lemma appears as an exercise in \cite{montesinos}. 
\begin{lemma}
    $S_g\times S^1$ can be obtained from $(0,1)$-Dehn filling on all components of the $S_g \times S_1$ surgery link $H_g$.
\end{lemma}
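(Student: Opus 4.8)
The plan is to argue by induction on $g$, exploiting the structure of $H_g$: it consists of an unknotted ``axis'' component $C_0$ together with $g$ ``Borromean clasps'' $(A_i,B_i)$, $i=1,\dots,g$, where each pair forms, together with a subarc of $C_0$, the Borromean configuration, and the $g$ clasps are otherwise supported in disjoint balls $B_1,\dots,B_g$ strung along $C_0$. (For $g=1$ this is just the Borromean rings.) The base case is the classical fact that $(0,1)$-filling, i.e.\ $0$-framed surgery, on all three components of the Borromean rings $H_1$ yields the $3$-torus $T^3=\Sigma_1\times S^1$; this is standard in the Kirby calculus literature and can in any case be verified directly by blowing up, sliding handles, and blowing down until the diagram becomes a recognizable presentation of $T^3$.

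For the inductive step I would localize the computation. The exterior of $C_0$ in $S^3$ is a solid torus $V\cong D^2\times S^1$, and $H_g\setminus C_0$ lies inside $V$, with each clasp $(A_i,B_i)$ contained in a ball $B_i\subset V$ through which the core circle of $V$ runs as an unknotted arc $\alpha_i$. The key local lemma — which is the $g=1$ statement cut open along a meridian disk of $C_0$ — asserts that performing $(0,1)$-surgery on a single clasp $(A_i,B_i)$ inside a ball $B_i\cong D^2\times I$ replaces $D^2\times I$ by $\Sigma_{1,1}\times I$, carrying $\alpha_i$ to $\{pt\}\times I$ and restricting to the identity on $D^2\times\partial I$ compatibly with the product structure on the rest of $V$. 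Granting this, $(0,1)$-surgery on all $g$ clasps turns $V$ into $\Sigma_{g,1}\times S^1$: the $D^2$-fibre transverse to the $S^1$ direction acquires one $1$-handle per clasp and becomes a once-punctured genus-$g$ surface, while the $S^1$ direction (a longitude of $C_0$) is untouched. Finally, $(0,1)$-surgery on $C_0$ caps $\partial V=T^2$ with a solid torus glued along the $S^1$ factor, which caps the boundary circle of the $\Sigma_{g,1}$ fibre with a disk; the result is $\Sigma_g\times S^1$, as claimed.

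The main obstacle is making the local lemma precise: one must carry enough framing and gluing data to see that the $g$ local $\Sigma_{1,1}\times I$ blocks, together with the solid torus coming from $C_0$, assemble with their $S^1$ directions aligned and with $C_0$'s core sitting as $\{pt\}\times S^1$ — that is, that the cut-and-reglue genuinely yields the \emph{product} $\Sigma_g\times S^1$ and not a twisted $\Sigma_g$-bundle over $S^1$. This is a finite cut-and-paste/Kirby-calculus check, which I would carry out once for a single clasp — equivalently, re-derive the Borromean-rings-to-$T^3$ fact while tracking the meridian disk of $C_0$ — and then invoke $g$ times. An alternative route avoids the induction: starting from $\Sigma_g\times S^1$, exhibit $2g+1$ disjoint solid tori whose cores are $C_0,A_1,B_1,\dots,A_g,B_g$ and whose removal and re-gluing (the inverse of the $(0,1)$-fillings) produces $S^3$, then identify the resulting link with $H_g$; the bookkeeping is essentially the same, but one works inside the simpler ambient manifold at each stage.
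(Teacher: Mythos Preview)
The paper does not actually prove this lemma: it states that the result ``appears as an exercise in \cite{montesinos}'' and moves on. So there is no proof in the paper to compare against.

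Your sketch is a reasonable way to supply an argument. The local lemma you isolate --- that $(0,1)$-surgery on a single Borromean clasp $(A_i,B_i)$ turns a sub-block $D^2\times I$ of the exterior of $C_0$ into $\Sigma_{1,1}\times I$, carrying the core arc to $\{pt\}\times I$ --- is exactly the right building block, and once it is established your assembly step is correct: with the convention that in $V=S^3\setminus N(C_0)\cong D^2\times S^1$ the disk fibre has boundary the Seifert longitude of $C_0$, the clasp surgeries replace the fibre by $\Sigma_{g,1}$ without disturbing $\partial V$, and then $(0,1)$-filling on $C_0$ caps the fibre boundary to give $\Sigma_g\times S^1$. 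You are also right to flag the one genuine thing to check, namely that the local replacements are compatible with the product structure so that the output is the honest product and not a twisted $\Sigma_g$-bundle; this follows once you track that the clasp surgeries are supported away from a fixed fibre $D^2\times\{pt\}$ of $V$ and fix it pointwise. The alternative ``reverse'' approach you mention at the end --- exhibiting the $2g+1$ surgery solid tori inside $\Sigma_g\times S^1$ --- is how Montesinos sets up the exercise and is arguably the cleanest route if you want to avoid Kirby moves entirely.
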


In order to compute the volume of a genus one link, we noted that the complement of the Hopf link in $S^3$ is $T \times (0,1)$, where $T$ is a torus. So the complement of a link in a thickened torus can be realized as the complement of a link of two more components in $S^3$, adding in the two components corresponding to the Hopf link that generates the $T \times (0,1)$.  Such a link can be input into the hyperbolic structures program SnapPy (\cite{SnapPy}) to compute the volume. 
\\\\
For genus $g \geq 2$, we take advantage of Dehn surgery on the link pictured in Figure \ref{fig:horrendouslink}. Doing (0, 1)-surgery on all of the components of such a link gives $S^g \times S^1$. 
\\\\
For a given virtual tg-hyperbolic link of genus $g \geq 2$ with $t$ components, we can find a tg-hyperbolic representation $(S, L')$. Then we double the manifold $S \times I \setminus L'$ across its two totally geodesic boundaries. This yields $S \times S^1 \setminus L''$ where $L'' $ is the link of $2t$ components that results by taking the union of a copy of $L'$ and a copy of its reflection in $S \times S^1$. The manifold $S \times S^1 \setminus L''$ can be input into the hyperbolic structures program SnapPy (\cite{SnapPy}) by placing $L''$ around the  $S_g \times S_1$ surgery link $H_g$ appropriately, and then doing $(0,1)$-Dehn filling on all of the components of $H_g$. The volume of the original virtual link is then half the volume that results. 
\\\\
There follows a table of hyperbolic volumes of virtual knots through four classical crossings. Knots are labelled as in Jeremy Green's virtual knot table (\cite{green}). The only nontrivial non-hyperbolic knot is the 3.6 knot, which is the classical trefoil.

\begin{center}
    \begin{tabular}{| l | l | l |}
    \hline
    Virtual Knot & Minimal Genus & Hyperbolic Volume \\ \hline
    2.1 & 1 & 5.33348956690 \\ \hline
    3.1 & 2 & 18.7531474071 \\ \hline
    3.2 & 1 & 7.70691180281 \\ \hline
    3.3 & 2 & 16.392265671 (same as 3.4) \\ \hline
    3.4 & 2 & 16.392265671 (same as 3.3) \\ \hline
    3.5 & 1 & 6.3545865572  \\ \hline
    3.6 & 0 & Not hyperbolic \\ \hline
    3.7 & 1 & 9.5034039310 \\ \hline
    4.1 & 2 & 22.493379859 \\ \hline
    4.2 & 2 & 22.611788156 (same as 4.69, 4.76, 4.98) \\ \hline 
    4.3 & 2 & 20.572042198 (same as 4.6) \\ \hline 
    4.4 & 2 & 21.200355468 (same as 4.30) \\ \hline
    4.5 & 2 & 21.9154088043 \\ \hline
    4.6 & 2 & 20.572042198 (same as 4.3) \\ \hline
    4.7 & 2 & 19.127715255 \\ \hline
    4.8 & 2 & 18.831683367 (same as 4.71, 4.77)\\ \hline
    4.9 & 2 & 23.669963712 \\ \hline
    4.10 & 2 & 23.103877032 \\ \hline
    4.11 & 2 & 23.856837980 \\ \hline
    4.12 & 1 & 10.6669791338 (same as 4.53, 4.73, 4.75 and twice 2.1) \\ \hline
    4.13 & 2 & 23.942021763 \\ \hline
    4.14 & 2 & 23.6657445099\\ \hline
    4.15 & 2 & 19.472911306 (same as 4.35) \\ \hline
    4.16 & 2 & 21.447588496 \\ \hline
    4.17 & 2 & 21.273940484 \\ \hline
    4.18 & 2 & 21.4281087698 \\ \hline
    4.19 & 2 & 21.3847454445 \\ \hline
    4.20 & 2 & 17.9102390621 (same as 4.38, 4.50) \\ \hline
    4.21 & 2 & 20.681258869 \\ \hline
    4.22 & 2 & 20.857960899 \\ \hline
    4.23 & 2 & 21.632850855  \\ \hline
    4.24 & 2 & 21.906855990 \\ \hline
    4.25 & 2 & 20.505494782 (same as 4.27) \\ \hline
    4.26 & 2 & 21.301761314 (same as 4.28)\\ \hline
    4.27 & 2 & 20.5054947824 (same as 4.25) \\ \hline
    4.28 & 2 & 21.301761314 (same as 4.26)\\ \hline
    4.29 & 2 & 21.961806381 \\ \hline
    4.30 & 2 & 21.200355468 (same as 4.4) \\ \hline
    4.31 & 2 & 21.8298273713 \\ \hline
    4.32 & 2 & 21.551562929 \\ \hline
    4.33 & 2 & 21.710522971  \\ \hline
    4.34 & 2 & 22.071520116 \\ \hline
    4.35 & 2 & 19.472911306 (same as 4.15) \\ \hline
    4.36 & 1 & 8.7933456039 \\ \hline
    4.37 & 1 & 6.5517432879 \\ \hline
    4.38 & 2 & 17.910239062 (same as 4.20, 4.50) \\ \hline
    4.39 & 2 & 21.135859812 \\ \hline
    4.40 & 2 & 20.273838496 (same as 4.41)\\ \hline
    4.41 & 2 & 20.2738384955 (same as 4.40)\\ \hline
    4.42 & 2 & 21.328567359 \\ \hline
    \end{tabular}
\end{center}
\begin{center}
    \begin{tabular}{| l | l | l |}
    \hline
    Virtual Knot & Minimal Genus & Hyperbolic Volume \\ \hline
    4.43 & 1 & 8.9673608488 \\ \hline
    4.44 & 2 & 21.735522882 (same as 4.46) \\ \hline
    4.45 & 2 & 23.626618267 (same as 4.47) \\ \hline
    4.46 & 2 & 21.735522882 (same as 4.44)  \\ \hline
    4.47 & 2 & 23.6266182673 (same as 4.45) \\ \hline
    4.48 & 2 & 20.618594305 \\ \hline
    4.49 & 2 & 21.2669639843 \\ \hline
    4.50 & 2 & 17.910239062 (same as 4.20, 4.38) \\ \hline
    4.51 & 2 & 21.459015648 (same as 4.85) \\ \hline
    4.52 & 2 & 21.456918384 \\ \hline
    4.53 & 1 & 10.6669791338 (same as 4.12, 4.73,4.75 and twice 2.1) \\ \hline
    4.54 & 2 & 21.839438180 \\ \hline 
    4.55 & 2 & 21.418632337 (\textbf{Kishino knot}) \\ \hline
    4.56 & 2 & 23.773639799 \\ \hline
    4.57 & 2 & 18.9694297800 \\ \hline
    4.58 & 2 & 19.0560288254 \\ \hline
    4.59 & 2 & 18.831683367 (same as 4.8, 4.71, 4.77)\\ \hline
    4.60 & 2 & 19.127715255 (same as 4.89) \\ \hline
    4.61 & 2 & 21.8860118290 (same as 4.68)\\ \hline
    4.62 & 2 & 23.5558331860 \\ \hline
    4.63 & 2 & 20.550520767 \\ \hline
    4.64 & 1 & 8.9293178231 \\ \hline
    4.65 & 1 & 10.5568662552 \\ \hline
    4.66 & 2 & 24.271905954 \\ \hline
    4.67 & 2 & 23.6009265895 \\ \hline
    4.68 & 2 & 21.886011829 (same as 4.61)\\ \hline
    4.69 & 2 & 22.611788156 (same as 4.2,4.76, 4.98)\\ \hline
    4.70 & 2 & 22.5937096032 \\ \hline
    4.71 & 2 & 18.831683367 (same as 4.8, 4.77)\\ \hline
    4.72 & 2 & 22.161104637 \\ \hline
    4.73 & 1 & 10.6669791338 (same as 4.12, 4.53, 4.75 and twice 2.1) \\ \hline
    4.74 & 2 & 21.1567287042 \\ \hline
    4.75 & 1 & 10.6669791338 (same as 4.12, 4.53, 4.73 and twice 2.1)\\ \hline
    4.76 & 2 & 22.6117881556 (same as 4.2, 4.69, 4.98)\\ \hline 
    4.77 & 2 & 18.831833668 (same as 4.8, 4.71)\\ \hline
    4.78 & 2 & 18.665545421 (same as 4.39, 4.79) \\ \hline
    4.79 & 2 & 18.6655454521 (same as 4.39, 4.78) \\ \hline
    4.80 & 2 & 17.593333761 (same as 4.81)\\ \hline
    4.81 & 2 & 17.593333761 (same as 4.80)\\ \hline
    4.82 & 2 & 19.498261755 (same as 4.84)\\ \hline
    4.83 & 2 & 21.2165000389 \\ \hline
    4.84 & 2 & 19.498261755 (same as 4.82)\\ \hline
    4.85 & 2 & 21.459015648 (same as 4.51) \\ \hline
    4.86 & 1 & 8.7786588032 \\ \hline
    4.87 & 2 & 19.0663669549 (same as 4.88, 4.93)\\ \hline
    4.88 & 2 & 19.0663669549 (same as 4.87, 4.93)\\ \hline
    4.89 & 2 & 19.1277152546 (same as 4.60) \\ \hline
    4.90 & 2 & 23.128377627 \\ \hline
    4.91 & 1 & 6.7551948165 \\ \hline
    4.92 & 1 & 7.51768989647 \\ \hline
    4.93 & 2 & 19.0663669549 (same as 4.87, 4.88)\\ \hline
    \end{tabular}
\end{center}
\begin{center}
    \begin{tabular}{| l | l | l |}
    \hline
    Virtual Knot & Minimal Genus & Hyperbolic Volume \\ \hline
    4.94 & 1 & 9.9665118837 \\ \hline
    4.95 & 1 & 10.9616419532 \\ \hline
    4.96 & 2 & 22.698482328 \\ \hline
    4.97 & 2 & 24.053938791 \\ \hline
    4.98 & 2 & 22.611788156 (same as 4.2, 4.69, 4.76) \\ \hline
    4.99 & 1 & 8.7385704088 \\ \hline
    4.100 & 1 & 8.3555021464 \\ \hline
    4.101 & 1 & 12.2113073079 \\ \hline
    4.102 & 1 & 10.714084294 \\ \hline
    4.103 & 2 & 24.604224640 (same as 4.88)\\ \hline
    4.104 & 1 & 11.340719807  \\ \hline
    4.105 & 1 & 11.5188395851 \\ \hline
    4.106 & 1 & 12.0776477618 \\ \hline
    4.107 & 2 & 26.7015415469 \\ \hline
    4.108 & 0 & 2.02988321282\\ \hline
    \end{tabular}
\end{center}

\bibliography{references}{}
\nocite{*}
\bibliographystyle{plain}


\end{document}